\theoremstyle{plain}
\newtheorem{thm}{Theorem}[section]
\newtheorem{prop}[thm]{Proposition}
\newtheorem{lemma}[thm]{Lemma}
\newtheorem{cor}[thm]{Corollary}
\theoremstyle{definition}
\newtheorem{defi}[thm]{Definition}
\newtheorem{rem}[thm]{Remark}
\newtheorem{conj}[thm]{Conjecture}
\newcommand{\bc}{{\mathbb C}}
\newcommand{\bh}{{\mathbb H}}
\newcommand{\br}{{\mathbb R}}
\newcommand{\bz}{{\mathbb Z}}
\newcommand{\ra}{\rightarrow}
\numberwithin{equation}{section}
\def\co{\colon\thinspace}
\let \cal \mathcal
\begin{document}

\title[Volume invariant and maximal representation]
{Volume invariant and maximal representations of discrete subgroups of Lie groups}

\author{Sungwoon Kim}
\address{School of Mathematics,
KIAS, Hoegiro 85, Dongdaemun-gu, Seoul, 130-722, Republic of Korea}
\email{sungwoon@kias.re.kr}

\author{Inkang Kim}
\address{School of Mathematics
KIAS, Hoegiro 85, Dongdaemun-gu, Seoul, 130-722, Republic of Korea}
\email{inkang@kias.re.kr}

\footnotetext[1]{2000 {\sl{Mathematics Subject Classification.}}
22E46, 57R20, 53C35}
\footnotetext[2]{{\sl{Key words and phrases.}}
volume invariant, lattice, representation variety, semisimple Lie group, Toledo invariant, maximal representation.}
\footnotetext[3]{The second
 author gratefully acknowledges the partial support of KRF grant
(0409-20060066).}

\begin{abstract}
Let $\Gamma$ be a lattice in a connected semisimple Lie group $G$ with trivial center and no compact factors.
We introduce a volume invariant for representations of $\Gamma$ into
$G$, which generalizes the volume invariant for representations of uniform lattices introduced by Goldman.
Then, we show that  the maximality of this volume invariant exactly characterizes discrete, faithful representations of $\Gamma$ into $G$.
\end{abstract}

\maketitle

\section{Introduction}

A volume invariant is defined to characterize discrete, faithful representations of a discrete group $\Gamma$
into a connected semisimple Lie group $G$. For a uniform lattice $\Gamma$, Goldman \cite{Go92} introduced a volume invariant $\upsilon(\rho)$ of a representation $\rho \co \Gamma \rightarrow G$ as follows:
Let $X$ be the associated symmetric space of dimension $n$ and $M=\Gamma\backslash X$.
To every representation $\rho \co \Gamma \rightarrow G$, a bundle $E_\rho$ over $M$ with fibre $X$ and structure group $G$ is associated.
One can obtain a closed $n$--form $\omega_\rho$ on $E_\rho$ by
spreading the $G$--invariant volume form $\omega$ on $X$ over the fibres of $E_\rho$. Then, the volume invariant $\upsilon(\rho)$ of $\rho$ is defined by
$$ \upsilon (\rho) = \int_M f^*\omega_\rho,$$
where $f$ is a section of $E_\rho$.

The definition of the volume invariant $\upsilon(\rho)$ is independent of the choice of a section since $X$ is contractible. It can be easily seen that the volume invariant $\upsilon(\rho)$ satisfies an inequality
\begin{eqnarray}\label{eqn:1.1} |\upsilon(\rho)| \leq \mathrm{Vol}(M),\end{eqnarray} which recovers the Milnor-Wood inequality
for $G=\mathrm{PSL}_2(\mathbb{R})$. Note that the volume invariant $\upsilon(\rho)$ is available only for representations of uniform lattices.
Goldman \cite{Go92} conjectured the following and gave a positive answer for all connected semisimple Lie groups except for $\mathrm{SU}(n,1), \mathrm{Sp}(n,1), \mathrm{F}_4^{-20}$.

\begin{conj}\label{con:1.1}
Equality holds in (\ref{eqn:1.1}) if and only if $\rho$ is a discrete, faithful representation
of $\Gamma$ into $G$.
\end{conj}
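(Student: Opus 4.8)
The plan is to treat the two implications of Conjecture \ref{con:1.1} separately, in both cases identifying $\upsilon(\rho)$ with a pairing between a bounded cohomology class and the fundamental class of $M$. Throughout I may assume $\Gamma$ is torsion-free (passing to a finite-index subgroup by Selberg's lemma), so that $M=\Gamma\backslash X$ is a closed aspherical $n$--manifold, $n=\dim X$, and $\Gamma$ is a Poincar\'e duality group of dimension $n$. For the easy implication, suppose $\rho$ is discrete and faithful. Then $\Lambda:=\rho(\Gamma)\cong\Gamma$ is again $\mathrm{PD}_n$, so $N:=\Lambda\backslash X$ is an aspherical $n$--manifold with $\pi_1(N)\cong\Gamma$ and cohomological dimension $n$; since a non-compact $n$--manifold has cohomological dimension at most $n-1$, the space $N$ must be closed and $\Lambda$ cocompact. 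A section $f$ of $E_\rho$ is the same datum as a $\rho$--equivariant map $\tilde f\co X\ra X$, which descends to $f\co M\ra N$ inducing an isomorphism on $\pi_1$; as both are closed aspherical manifolds, $f$ has degree $\pm 1$, whence $|\upsilon(\rho)|=\left|\int_M \tilde f^*\omega\right|=\mathrm{Vol}(N)$. Finally $M$ and $N$ are closed locally symmetric with the same universal cover, so Gromov's proportionality principle gives $\mathrm{Vol}(M)/\|M\|=\mathrm{Vol}(N)/\|N\|$, and since simplicial volume is a homotopy invariant one has $\|M\|=\|N\|$; therefore $\mathrm{Vol}(N)=\mathrm{Vol}(M)$ and equality holds in (\ref{eqn:1.1}).

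The substantive implication is the converse. First I would realize $\upsilon(\rho)$ as $\langle \rho^*\omega_b,[M]\rangle$, where $\omega_b\in H^n_{c,b}(G;\br)$ is the bounded class represented by the volume cocycle $(g_0,\dots,g_n)\mapsto \mathrm{Vol}(g_0x,\dots,g_nx)$ of straightened geodesic simplices. Its Gromov norm equals the maximal volume $v_n$ of a straight simplex in $X$, and the proportionality $\|M\|=\mathrm{Vol}(M)/v_n$ turns (\ref{eqn:1.1}) into the norm estimate $|\upsilon(\rho)|\le\|\rho^*\omega_b\|\cdot\|M\|\le v_n\|M\|=\mathrm{Vol}(M)$. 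The hypothesis $|\upsilon(\rho)|=\mathrm{Vol}(M)$ then says that, along an efficient fundamental cycle, the pulled-back cocycle attains its maximal value almost everywhere, i.e. that $\rho$ carries almost every maximal-volume configuration to a maximal-volume configuration. The next step is to extract a measurable $\rho$--equivariant boundary map: using amenability of the $\Gamma$--action on the Furstenberg boundary one produces a $\rho$--equivariant map into probability measures on $\partial X$, and the extremality just obtained forces this map to be a Dirac mass, yielding a measurable $\rho$--equivariant $\phi\co\partial X\ra\partial X$ sending almost every regular ideal simplex to a regular ideal simplex.

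It then remains to run the rigidity of maximal configurations. In higher rank and in real hyperbolic space of dimension $\ge 3$, a boundary map preserving regular ideal simplices is the boundary extension of an isometry $g\in G$, so $\rho(\gamma)=g\gamma g^{-1}$ for all $\gamma$ and $\rho$ is discrete and faithful; for $X=\bh^2$ the only surviving structure is the cyclic order on $\partial\bh^2$, so $\phi$ is a monotone circle map semiconjugating the two actions, and maximality of the Euler number forces $\rho$ to be Fuchsian, again discrete and faithful. I expect the main obstacle to lie exactly here: first, making the passage from ``extremal cocycle'' to ``boundary map compatible with maximal simplices'' measure-theoretically rigorous; and second, proving the rigidity of maximal (regular ideal) configurations in the rank-one cases $\mathrm{SU}(n,1)$, $\mathrm{Sp}(n,1)$, $\mathrm{F}_4^{-20}$ left open by Goldman, where such simplices are far less rigid than in the real hyperbolic or higher-rank setting. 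Establishing that, even there, preservation of maximal configurations forces $\phi$ to intertwine the $\Gamma$--action with a discrete faithful action on $X$ is the crux of the argument.
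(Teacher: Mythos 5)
Your forward implication (discrete and faithful $\Rightarrow$ equality) is essentially correct, though it takes a different route from the paper: the paper extends $\rho$ to an automorphism of $G$ by Mostow/Margulis rigidity and evaluates the restriction class directly (its Lemma 4.2), whereas you argue via Poincar\'e duality, degree, and proportionality. Two small caveats there: your proportionality step only yields $\mathrm{Vol}(N)=\mathrm{Vol}(M)$ if $\|M\|\neq 0$, which needs the Lafont--Schmidt positivity theorem (or, more simply, invoke Mostow rigidity to get $M$ isometric to $N$); and the reduction to torsion-free $\Gamma$ requires checking multiplicativity of both sides and that a finite kernel is impossible (Borel density). The genuine gap is in the converse, and it sits exactly where you flag it. Your plan reduces maximality to the existence of a measurable $\rho$--equivariant boundary map carrying maximal-volume (``regular ideal'') configurations to maximal-volume configurations, and then appeals to rigidity of such configurations. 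But for $\mathrm{SU}(n,1)$, $\mathrm{Sp}(n,1)$ and $\mathrm{F}_4^{-20}$ --- the only cases in which the conjecture remained open after Goldman --- no such rigidity statement is available: there is not even a usable description of which ideal simplices maximize the volume cocycle in those geometries, and this is precisely why Goldman's ideal-simplex argument excluded them. A proposal that defers exactly this point has not proved the conjecture. The same objection applies to your higher-rank step: maximal-volume configurations in higher-rank symmetric spaces are likewise not understood, and the paper never touches them; in higher rank it argues instead that a non-discrete or non-faithful $\rho$ has amenable image (Margulis' normal subgroup and superrigidity theorems), hence $\rho_b^*(\omega_b)=0$ and the invariant vanishes, which is strictly less than $\mathrm{Vol}(M)$.

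What actually closes the gap in the paper is the Besson--Courtois--Gallot natural map, not boundary-map rigidity. Given nonelementary $\rho$, one builds a smooth $\rho$--equivariant map $F\co X\ra X$ with $\mathrm{Jac}_nF\le 1$ (Proposition 5.2: push forward Patterson--Sullivan measures by the Burger--Mozes equivariant boundary map and take barycenters). Identifying $|\upsilon(\rho)|$ with the bounded-cohomological pairing (the paper's Lemma 3.3, which matches your first step), maximality forces $\left|\int_M F^*\omega\right|=\mathrm{Vol}(M)$, hence $\mathrm{Jac}_nF\equiv 1$, and the equality case of the BCG Jacobian estimate then says $F$ is an isometry, so $\rho$ is conjugation by an isometry on $\Gamma$ and in particular discrete and faithful. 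In other words, the rigidity input you are missing is supplied by an analytic theorem about the barycenter map --- the equality case of the BCG inequality --- rather than by any classification of maximal ideal configurations; this is exactly the alternative the paper's introduction alludes to when it notes that Conjecture \ref{con:1.1} ``is able to be proved by using the Besson-Courtois-Gallot technique.'' Without that substitute, your outline fails in precisely the cases that matter.
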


Numerical invariants such as the volume invariant have been used to study a representation variety $\mathrm{Hom}(\Gamma,G)$ consisting of homomorphisms $\rho \co \Gamma \rightarrow G$.
For example, Goldman \cite{Go88} characterized $(4g-3)$--connected components of the representation variety $\mathrm{Hom}(\pi_1(S),\mathrm{PSL}_2 (\mathbb{R}))$ for a closed surface $S$ of genus $g$ via the Toledo invariant.
Moreover, he verified that the connected component of $\mathrm{Hom}(\pi_1(S),\mathrm{PSL}_2 (\mathbb{R}))$ with maximal Toledo invariant is exactly the embedding of the Teichm\"{u}ller space of $S$ into $\mathrm{Hom}(\pi_1(S),\mathrm{PSL}_2 \mathbb{R})$ \cite{Go80}.
Burger, Iozzi and Wienhard \cite{BIW10} generalize the theories of a closed surface representation variety in $\mathrm{PSL}_2 (\mathbb{R})$ to other Lie groups such as split simple Lie groups and Lie groups of Hermitian type.

In comparison with uniform lattices, numerical invariants for representations of nonuniform lattices have been rarely defined. The main reason for this is that the fundamental class of open manifolds vanishes in the top dimensional singular homology. Recently, Burger, Iozzi and Wienhard \cite{BIW11} define
the Toledo invariant for representations of a compact surface with boundary by using its relative
fundamental class. Then, they show that this Toledo invariant exactly detects hyperbolic structures on the surface.

The aim of this paper is to introduce a new invariant for representations of arbitrary lattices $\Gamma$ in $G$ which detects discrete, faithful representations in the representation variety $\mathrm{Hom}(\Gamma,G)$. One advantage of the new invariant is that it provides a tool for studying the representation varieties of nonuniform lattices in semisimple Lie groups. In addition, we explore the relation between the new invariant and $\upsilon(\rho)$. Then, we give a proof of Conjecture \ref{con:1.1}.

Let $\Gamma$ be a lattice in $G$. Every representation $\rho \co \Gamma \rightarrow G$ induces canonical pullback maps $\rho^*_b \co H^\bullet_{c,b}(G,\mathbb{R})\rightarrow H^\bullet_b(\Gamma,\mathbb{R})$ in continuous bounded cohomology.
Let $c \co H^\bullet_{c,b}(G,\mathbb{R})\rightarrow H^\bullet_c(G,\mathbb{R})$ be the comparison map induced
from the inclusion of the continuous bounded cochain complex of $G$ into the continuous cochain complex of $G$.
The Van Est isomorphism gives an isomorphism $H^n_c(G,\mathbb{R})\cong \mathbb{R}\cdot \omega$,
where $\omega$ is the $G$--invariant volume form on the associated symmetric space $X$.
Then, we define a new invariant $\mathrm{Vol}(\rho)$ by
$$\mathrm{Vol}(\rho) = \inf \{ |\langle \rho^*_b(\omega_b),\alpha \rangle| \text{ }|\text{ }c(\omega_b)=\omega \text{ and } \alpha \in [M]^{\ell^1}_\mathrm{Lip} \},
$$
where $[M]^{\ell^1}_\mathrm{Lip}$ is the set of all $\ell^1$--homology classes in $H^{\ell^1}_n(M,\mathbb{R})$ that are represented by at least one locally finite fundamental cycle with finite Lipschitz constant.
Note that $\rho^*_b (\omega_b)$ is regarded as a bounded
cohomology class in $H^n_b(M,\mathbb{R})$ by the canonical isomorphism between $H^n_b(\Gamma,\mathbb{R})$ and $H^n_b(M,\mathbb{R})$. Thus, $\rho^*_b(\omega_b)$ can be evaluated on $\ell^1$--homology classes in $H^{\ell^1}_n(M,\mathbb{R})$ and hence, the definition of $\mathrm{Vol}(\rho)$ makes sense.
For more details on the definition and properties of the volume invariant $\mathrm{Vol}(\rho)$, see Section \ref{sec:3}.

An essential ingredient in defining the volume invariant $\mathrm{Vol}(\rho)$ is the geometric simplicial volume of $M$, introduced by Gromov \cite{Gr82}.
Indeed, Gromov defined two kinds of simplicial volumes for open Riemannian manifolds. One is defined as the $\ell^1$--seminorm of the locally finite fundamental class of $M$. This is a topological invariant. The other is defined by the infimum over all $\ell^1$--norms of locally finite fundamental cycles of $M$ with finite Lipschitz constant.
The latter is called the geometric simplicial volume of $M$ because the Riemannian structure on $M$ is involved in its definition. Note that this is not a topological invariant anymore.

One can notice that the volume invariant $\mathrm{Vol}(\rho)$ can be defined via
locally finite fundamental cycles of $M$ instead of locally finite fundamental cycles with finite Lipschitz constant.
However, it turns out that if the volume invariant $\mathrm{Vol}(\rho)$ is defined via locally finite fundamental cycles, then this invariant does not always detect discrete, faithful representations. For further discussion of this, see Section \ref{sec:3.1}.

\begin{thm}\label{thm:1.2}
Let $\Gamma$ be an irreducible lattice in a connected semisimple Lie group $G$ with trivial center and
no compact factors. Let $\rho \co \Gamma \rightarrow G$ be a representation. Then, the volume invariant $\mathrm{Vol}(\rho)$ satisfies an inequality
$$ \mathrm{Vol}(\rho) \leq \mathrm{Vol}(M),$$
where $X$ is the associated symmetric space and $M=\Gamma\backslash X$.
Moreover,  equality holds if and only if
$\rho$ is a discrete, faithful representation.
\end{thm}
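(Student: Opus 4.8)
The plan is to separate the inequality from the characterization of equality, and within the latter to isolate a single analytic mechanism---isometric pullback of the volume class---that drives both implications.

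For the inequality I would work entirely through the duality between continuous bounded cohomology and $\ell^1$--homology. Fix any admissible pair $(\omega_b,\alpha)$ with $c(\omega_b)=\omega$ and $\alpha\in[M]^{\ell^1}_{\mathrm{Lip}}$. The defining duality estimate gives $|\langle\rho^*_b(\omega_b),\alpha\rangle|\le\|\rho^*_b(\omega_b)\|_\infty\,\|\alpha\|_1$, and functoriality of bounded cohomology makes the pullback norm non-increasing, so $\|\rho^*_b(\omega_b)\|_\infty\le\|\omega_b\|_\infty$. Since the two factors on the right depend on the independent variables $\omega_b$ and $\alpha$, passing to the infimum over admissible pairs factorises, and I obtain $\mathrm{Vol}(\rho)\le\bigl(\inf_{c(\omega_b)=\omega}\|\omega_b\|_\infty\bigr)\cdot\|M\|_{\mathrm{Lip}}$, where $\|M\|_{\mathrm{Lip}}$ is the geometric (Lipschitz) simplicial volume. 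The first factor is the Gromov norm of the volume class, realised by the straightened volume cocycle, and the proportionality principle for the Lipschitz simplicial volume identifies the product with $\mathrm{Vol}(M)$. This yields $\mathrm{Vol}(\rho)\le\mathrm{Vol}(M)$.

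For the equality, I would show that $\mathrm{Vol}(\rho)=\mathrm{Vol}(M)$ forces the chain of estimates above to be sharp, so that the pullback $\rho^*_b$ is \emph{isometric} on the volume class. Using the realisation of $\omega_b$ as the volume cocycle on the Furstenberg boundary $B=G/P$ and the implementation of $\rho^*_b$ as pullback along a measurable $\rho$--equivariant boundary map $\phi\co B\ra B$ (in the spirit of Burger--Iozzi and Burger--Monod), isometry forces $\phi$ to send almost every maximal-volume (regular) configuration of boundary points to another such configuration. The rigidity of these configurations---they constitute a single $G$--orbit with controlled stabilisers---then forces $\phi$ to be the boundary extension of a $\rho$--equivariant isometry $\bar\rho\co X\ra X$. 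Equivalently $\rho$ is conjugate to a standard lattice embedding, hence discrete and faithful. Conversely, a discrete, faithful $\rho$ produces exactly such an equivariant boundary map, and reversing the computation---pairing the straightened class pulled back through the associated proper equivariant map with a Lipschitz fundamental cycle---recovers $\mathrm{Vol}(M)$; combined with the inequality this gives equality.

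The main obstacle is the construction and control of the boundary map $\phi$ in the nonuniform setting: because the fundamental cycle lives in locally finite ($\ell^1$--)homology and the Lipschitz constraint is indispensable (without it the invariant fails to detect discreteness, as noted above), the argument must propagate the sharpness of the duality estimate out to the cusps and convert it into an almost-everywhere statement on $B$. Secondarily, the rigidity of maximal-volume configurations must be established uniformly across all admissible $G$, including the rank-one cases $\mathrm{SU}(n,1)$, $\mathrm{Sp}(n,1)$ and $\mathrm{F}_4^{-20}$ left open by Goldman; here one relies on the fact that the top-degree volume cocycle is a nonzero multiple of the bounded volume class with extremal configurations forming a single orbit, which is precisely the dual of the Gromov--Thurston simplex-straightening rigidity.
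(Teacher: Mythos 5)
Your inequality argument is sound and is essentially identical to the paper's Proposition \ref{pro:3.1}: duality estimate, norm non-increase of $\rho^*_b$, factorization of the infimum, and the proportionality principle $\|\omega\|_\infty\cdot\|M\|_{\mathrm{Lip}}=\mathrm{Vol}(M)$. The problem is the equality case. Your entire mechanism rests on the claim that volume-maximizing boundary configurations ``constitute a single $G$--orbit'' and that a measurable equivariant boundary map preserving them must extend an equivariant isometry. This is the Gromov--Thurston/Bucher--Burger--Iozzi argument, and it is only available for real hyperbolic space, where Haagerup--Munkholm identify the maximizers as regular ideal simplices. For $\mathrm{SU}(n,1)$, $\mathrm{Sp}(n,1)$ and $\mathrm{F}_4^{-20}$ --- precisely the cases left open by Goldman, i.e.\ the cases where this theorem has new content --- the structure of volume-maximizing configurations is not known (even the value of $\|\omega\|_\infty$ is not known there), so the step you lean on is an unsupported, essentially open, claim rather than a citable fact. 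The paper avoids this entirely: in higher rank it never looks at maximal configurations, using instead Margulis superrigidity and the normal subgroup theorem to show that a non-discrete or non-faithful $\rho$ has amenable image, whence $H^\bullet_{c,b}(\rho(\Gamma),\mathbb{R})=0$, $\rho^*_b(\omega_b)=0$ and $\mathrm{Vol}(\rho)=0$; in rank one it uses the Besson--Courtois--Gallot natural map (barycenter of pushed-forward Patterson--Sullivan measures), where maximality forces $\mathrm{Jac}_nF\equiv 1$ and the equality case of the Jacobian estimate makes $F$ an isometry, with no input about maximal simplices. Note also that your conclusion ``$\rho$ is conjugate to a standard lattice embedding'' overshoots and is false for $\mathrm{PU}(1,1)$, where maximal representations form Teichm\"uller space; the paper handles $\mathrm{SO}(2,1)$ separately through the Burger--Iozzi--Wienhard Toledo invariant.

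There is a second, smaller gap in your converse direction. Since $\mathrm{Vol}(\rho)$ is an \emph{infimum} over all bounded representatives $\omega_b\in c^{-1}(\omega)$ and all $\alpha\in[M]^{\ell^1}_{\mathrm{Lip}}$, showing that a discrete faithful $\rho$ is maximal requires showing that \emph{every} admissible pair pairs to $\mathrm{Vol}(M)$, not exhibiting one good pairing. The paper does this via Mostow/Margulis rigidity (to reduce to the standard inclusion up to an automorphism of $G$) together with Lemma \ref{lem:4.2}, whose proof is nontrivial: it needs the geodesic straightening operator on locally finite Lipschitz chains (Proposition \ref{pro:4.1}) and an absolute-convergence and rearrangement argument to show that the coboundary term $\langle\delta b,str^{\mathrm{lf}}(c)\rangle$ vanishes, so that changing the bounded representative of $\omega$ does not change the pairing. ``Reversing the computation'' does not address this dependence on the choice of $\omega_b$, and it is exactly where the Lipschitz and locally finite hypotheses do real work in the nonuniform setting.
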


Theorem \ref{thm:1.2} implies that the volume invariant $\mathrm{Vol}(\rho)$ exactly characterizes discrete, faithful representations
in the representation variety $\mathrm{Hom}(\Gamma,G)$. In particular, when $\Gamma$ is a uniform lattice, we verify that
\begin{eqnarray}\label{eqn:1.2} \mathrm{Vol}(\rho)=|\upsilon(\rho)|. \end{eqnarray}
From the view of Equation (\ref{eqn:1.2}), the volume invariant $\mathrm{Vol}(\rho)$ can be regarded as an invariant for representations of arbitrary lattices extending the volume invariant $\upsilon(\rho)$ only for representations of uniform lattices. Note that Theorem \ref{thm:1.2} covers the remaining cases $\mathrm{SU}(n,1), \mathrm{Sp}(n,1), \mathrm{F}_4^{-20}$ that Goldman's proof in \cite{Go92} did not cover. In fact, one can easily notice that Conjecture \ref{con:1.1} is able to be proved by using the Besson-Courtois-Gallot technique in \cite{BCG99}.

In a similar way, we define a volume invariant $\mathrm{Vol}(\rho)$ for representations $\rho \co \Gamma \rightarrow \mathrm{SO}(m,1)$ of lattices $\Gamma$ in $\mathrm{SO}(n,1)$. A representation $\rho \co \Gamma \rightarrow \mathrm{SO}(m,1)$ is said to be a \emph{totally geodesic representation} if there is a totally geodesic $\mathbb{H}^n \subset \mathbb{H}^m$ so that the image of the representation lies in the subgroup $G \subset \mathrm{SO}(m,1)$ that preserves this $\mathbb{H}^n$ and that the $\rho$--equivariant map $F \co \mathbb{H}^n \rightarrow \mathbb{H}^m$ is a totally geodesic isometric embedding. Then, we show that this volume invariant characterizes totally geodesic representations.

\begin{thm}
Let $\Gamma$ be a  lattice in $\mathrm{SO}(n,1)$ and $M=\Gamma \backslash \mathbb{H}^n$. The volume invariant $\mathrm{Vol}(\rho)$ of a representation $\rho \co \Gamma \rightarrow \mathrm{SO}(m,1)$ for $m\geq n \geq 3$ satisfies an inequality $$\mathrm{Vol}(\rho) \leq \mathrm{Vol}(M).$$
Moreover, equality holds if and only if $\rho$ is a totally geodesic representation.
\end{thm}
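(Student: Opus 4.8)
The plan is to mirror the bounded-cohomology argument behind Theorem \ref{thm:1.2}, specialized to the rank-one geometry of real hyperbolic space. Here the relevant class is the bounded volume class $\omega_b \in H^n_{c,b}(\mathrm{SO}(m,1),\mathbb{R})$ represented by the cocycle that assigns to $(g_0,\dots,g_n)$ the signed $n$-dimensional volume of the geodesic simplex in $\mathbb{H}^m$ with vertices $g_0 o,\dots,g_n o$. The first point to record is that its sup-norm equals $v_n$, the maximal volume of a geodesic $n$-simplex in $\mathbb{H}^n$: any $n+1$ points of $\mathbb{H}^m$ lie on a totally geodesic copy of $\mathbb{H}^n$, so the supremum is computed inside $\mathbb{H}^n$ and is the same constant as for the domain. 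Combining $|\langle \rho_b^*(\omega_b),\alpha\rangle| \le \|\omega_b\|_\infty \|\alpha\|_1$ with the proportionality principle $\|M\|_{\mathrm{Lip}} = \mathrm{Vol}(M)/v_n$ for the Lipschitz (geometric) simplicial volume --- the same input used for Theorem \ref{thm:1.2} --- and taking the infimum over Lipschitz fundamental cycles $\alpha \in [M]^{\ell^1}_{\mathrm{Lip}}$ yields $\mathrm{Vol}(\rho) \le v_n \cdot \|M\|_{\mathrm{Lip}} = \mathrm{Vol}(M)$.

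For the equality statement I would treat the two implications separately. If $\rho$ is totally geodesic, with $\rho$-equivariant isometric totally geodesic embedding $F \co \mathbb{H}^n \ra \mathbb{H}^m$, then $F$ carries geodesic simplices of $\mathbb{H}^n$ isometrically onto geodesic simplices of $\mathbb{H}^m$, so evaluating $\rho_b^*(\omega_b)$ on a straightened fundamental cycle of $M$ reproduces the Riemannian volume of $M$ term by term; a direct computation then gives $\mathrm{Vol}(\rho) = \mathrm{Vol}(M)$. The substantive direction is the converse. Fixing a Lipschitz fundamental cycle $\alpha = \sum_i a_i \sigma_i$ with $\sum_i |a_i|$ arbitrarily close to $\|M\|_{\mathrm{Lip}}$ and straightening, the pairing $\langle \rho_b^*(\omega_b),\alpha\rangle$ is a sum of terms each bounded in absolute value by $v_n |a_i|$. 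The equality $\mathrm{Vol}(\rho) = \mathrm{Vol}(M) = v_n \|M\|_{\mathrm{Lip}}$ therefore forces the straightened $\rho$-images of almost every simplex to be geodesic $n$-simplices whose volume saturates the bound $v_n$, that is, to degenerate to regular ideal simplices.

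The crux is upgrading this ``almost everywhere maximality'' to a genuine totally geodesic embedding. I would produce a $\rho$-equivariant boundary map $\partial_\infty \mathbb{H}^n \ra \partial_\infty \mathbb{H}^m$ (via the amenability of the $\Gamma$-action on $\partial_\infty \mathbb{H}^n$, which underlies the bounded pullback $\rho_b^*$) and show that the maximality condition forces it to send the vertices of regular ideal $n$-simplices to vertices of regular ideal $n$-simplices. For $n \ge 3$ these configurations are rigid: a regular ideal $n$-simplex is determined up to an isometry of $\mathbb{H}^n$ by its vertex set (Haagerup--Munkholm), and it is exactly here that the hypothesis $n \ge 3$ enters, since for $n = 2$ every ideal triangle has area $\pi$ and no rigidity is available. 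This rigidity pins the boundary map down, on a full-measure set and then by continuity everywhere, to the boundary extension of an isometric totally geodesic embedding $\mathbb{H}^n \hookrightarrow \mathbb{H}^m$, giving the totally geodesic structure of $\rho$. I expect this last step --- converting the measure-theoretic regularity of straightened simplices into the rigidity of the boundary map, in the dual spirit of the Gromov--Thurston proof of Mostow rigidity --- to be the main obstacle; a secondary technical point is justifying the proportionality principle and the straightening in the noncompact, finite-volume setting, where the finite Lipschitz constant and $\ell^1$-homology are essential. The barycentre technique of Besson--Courtois--Gallot provides an alternative route to the equivariant map $F$ and to the same conclusion.
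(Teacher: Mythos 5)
Your proposal is correct in outline, but for the hard direction it takes a genuinely different route from the paper. The paper's own (independent) proof, valid for $m \geq n > 3$, is precisely the Besson--Courtois--Gallot argument you relegate to a closing remark: the natural map $F \co \mathbb{H}^n \ra \mathbb{H}^m$ of Proposition \ref{pro:5.2} satisfies $\mathrm{Jac}_n F \leq 1$ because $\delta(\Gamma) = n-1$; maximality forces $\left|\int_M F^*\omega_n^m\right| = \mathrm{Vol}(M)$ by the argument of Theorem \ref{thm:5.3}, hence $\mathrm{Jac}_n F = 1$ almost everywhere, and \cite{FK06} upgrades this to a totally geodesic isometric embedding. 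Your primary route --- an equivariant boundary map which maximality forces to send vertices of regular ideal simplices to vertices of regular ideal simplices, followed by Haagerup--Munkholm rigidity for $n \geq 3$ --- is the dual Gromov--Thurston argument of Bucher--Burger--Iozzi, which the paper cites (\cite{BBI}) as the other proof of this theorem and in fact relies on for $n = 3$, where its BCG route breaks down (the bounded-primitive lemma inside the proof of Theorem \ref{thm:5.3} needs the symmetric space to have dimension at least $4$). So your approach buys the case $n = 3$ and stays entirely within bounded cohomology, while the paper's buys brevity, reducing rigidity to a pointwise Jacobian equality using machinery already built in Section \ref{sec:5}. Two steps in your sketch need more care than you give them. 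In the ``totally geodesic $\Rightarrow$ maximal'' direction, $\mathrm{Vol}(\rho)$ is an infimum over \emph{all} bounded classes $\omega_{n,b}^m \in c^{-1}(\omega_n^m)$, so a term-by-term computation with the geodesic-simplex cocycle alone does not suffice; the paper handles the coboundary ambiguity with Lemma \ref{lem:4.2}, whose absolute-convergence argument is exactly what makes the pairing independent of the choice of bounded representative on $\ell^1$ Lipschitz cycles. In the converse, ``almost every simplex degenerates to a regular ideal one'' can only hold along a minimizing sequence of Lipschitz fundamental cycles whose Lipschitz constants blow up --- no single cycle witnesses it --- so the measure-theoretic and continuity step you flag as the main obstacle is indeed where essentially all the work of \cite{BBI} lies; your outline names the right tools (amenability for the boundary map, Haagerup--Munkholm rigidity) but does not carry them out.
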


Finally using bounded cohomology theory and volume invariant,
we can formulate the local rigidity phenomena of  complex hyperbolic uniform lattices. Specially we prove that
\begin{thm}Let $\Gamma\subset \mathrm{SU}(n,1)$ be a uniform lattice and $\rho:\Gamma\ra \mathrm{SU}(m,1)$, $m\geq n \geq 2$  a representation. Then it is
a maximal volume representation if and only if it is a totally geodesic representation. For the natural inclusion
$\Gamma\subset \mathrm{SU}
(n,1)\subset \mathrm{SU}(m,1)\subset  \mathrm{Sp}(m,1)$, it is locally rigid, in the sense that the nearby representations stabilize a copy of $\bh^n_\bc$ inside $\bh^m_\bh$.
\end{thm}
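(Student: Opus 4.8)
The plan is to treat the two assertions in turn, building on the bounded--cohomological machinery already assembled for Theorem \ref{thm:1.2}. For the equivalence ``maximal $\Leftrightarrow$ totally geodesic'' the first step is to choose the correct bounded class. Since $\dim_\br M = 2n < 2m = \dim_\br \bh^m_\bc$, one cannot pull back the volume form of the target directly; instead I would use the bounded K\"ahler class $\kappa_b \in H^2_{c,b}(\mathrm{SU}(m,1),\br)$ and its $n$--fold cup power $\kappa_b^{\cup n} \in H^{2n}_{c,b}(\mathrm{SU}(m,1),\br)$, whose comparison image is, up to normalization, the volume form $\omega$ on a totally geodesic $\bh^n_\bc$. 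With this choice $\mathrm{Vol}(\rho)$ becomes the Toledo--type invariant $|\langle \rho^*_b(\kappa_b^{\cup n}),\alpha\rangle|$ evaluated on $\alpha \in [M]^{\ell^1}_\mathrm{Lip}$, and the inequality $\mathrm{Vol}(\rho)\le \mathrm{Vol}(M)$ follows exactly as in Theorem \ref{thm:1.2} from the computation of $\|\kappa_b^{\cup n}\|_\infty$ together with the proportionality between the $\ell^1$--seminorm of the Lipschitz locally finite fundamental cycle and $\mathrm{Vol}(M)$.

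The heart of the first part is the equality case. Here I would pass to a measurable $\rho$--equivariant boundary map $\phi \co \partial\bh^n_\bc \to \partial\bh^m_\bc$ furnished by amenability of the minimal parabolic subgroup of $\mathrm{SU}(n,1)$, and reinterpret $\mathrm{Vol}(\rho)=\mathrm{Vol}(M)$ as the statement that $\phi$ pulls the bounded Bergmann (Cartan angular) cocycle of $\bh^m_\bc$ back to that of $\bh^n_\bc$ almost everywhere. Rigidity of the Cartan angular invariant then forces $\phi$ to carry maximal (chain--preserving) configurations of ideal points to maximal ones; by the classification of such configurations in complex hyperbolic geometry this means $\phi$ is the ideal extension of a holomorphic totally geodesic embedding $\bh^n_\bc \hookrightarrow \bh^m_\bc$, so that $\rho$ is a totally geodesic representation. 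The converse is immediate, since a totally geodesic $\rho$ realizes equality by construction.

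For the local rigidity statement I would first extend $\mathrm{Vol}(\rho)$ to targets $\rho \co \Gamma \to \mathrm{Sp}(m,1)$ by restricting the bounded K\"ahler class to a complex geodesic subspace $\bh^m_\bc \subset \bh^m_\bh$ and again taking its $n$--th power; the natural inclusion $\rho_0$ is totally geodesic, hence maximal by the first part. The deformation claim then reduces to showing that the maximal locus $\{\rho : \mathrm{Vol}(\rho)=\mathrm{Vol}(M)\}$ is open near $\rho_0$ in $\mathrm{Hom}(\Gamma,\mathrm{Sp}(m,1))$. Closedness is immediate from upper semicontinuity of $\mathrm{Vol}$ and the uniform bound, so the content is openness. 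Because $\mathrm{Sp}(m,1)$ has property (T) and is of rank one, it suffices to treat reductive nearby representations, for which Corlette's theorem supplies a $\rho$--equivariant harmonic map $F \co \bh^n_\bc \to \bh^m_\bh$; the map attached to $\rho_0$ is precisely the totally geodesic holomorphic embedding. A Siu--Corlette pluriharmonicity argument, combined with a Bochner/Matsushima--type vanishing for the second fundamental form, should then show that the deformed harmonic maps remain totally geodesic, so their images are copies of $\bh^n_\bc$ stabilized by $\rho(\Gamma)$.

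I expect the openness step to be the main obstacle. Continuity of $\mathrm{Vol}$ alone only yields $\mathrm{Vol}(\rho)$ \emph{close} to $\mathrm{Vol}(M)$, not equal to it, and one must genuinely rule out nearby non--totally--geodesic maximal maps. The delicate point is to prove that a small deformation of the holomorphic totally geodesic harmonic map cannot acquire a nondegenerate ``quaternionic'' component, i.e.\ that its second fundamental form stays identically zero; this is exactly where the curvature operator of $\bh^m_\bh$ and the pluriharmonicity forced by the K\"ahler structure on $\bh^n_\bc$ must be used to kill the first--order variation and conclude that maximality, and with it the stabilized copy of $\bh^n_\bc$, persists.
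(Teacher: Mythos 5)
Your first assertion (maximal $\Leftrightarrow$ totally geodesic for target $\mathrm{SU}(m,1)$) is attacked by a genuinely different route from the paper: you propose a Burger--Iozzi style argument via a measurable $\rho$--equivariant boundary map and rigidity of the Cartan angular cocycle, whereas the paper first disposes of non-reductive $\rho$ (image in a parabolic, hence $\mathrm{Vol}(\rho)=0$), then takes a $\rho$--equivariant \emph{harmonic} map $F\co \bh^n_\bc\ra\bh^m_\bc$, uses maximality to force $dF$ to have maximal rank at some point, concludes holomorphicity by Siu's argument \cite{Siu}, and finishes with the estimate $\mathrm{Jac}_{2n}F\leq 1$ for holomorphic maps from \cite{BCG99}, forcing $F$ to be an isometric, totally geodesic embedding. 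Your route is plausible for $n\geq 2$ and close in spirit to \cite{BI}, but the crucial equality step is asserted rather than proved: maximality here is the pairing with the degree--$2n$ class $\kappa_b^{\cup n}$, not the degree--$2$ class treated in the boundary-map literature, and you would need a formula expressing $\langle\rho^*_b(\kappa_b^{\cup n}),\alpha\rangle$ as an integral over boundary configurations together with an analysis of when a product of Cartan cocycles attains its supremum; neither is supplied.

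The genuine gaps are in the rigidity assertion. First, you correctly identify that continuity of $\mathrm{Vol}$ only gives $\mathrm{Vol}(\rho_t)$ \emph{close} to $\mathrm{Vol}(M)$, but the remedy you sketch (pluriharmonicity plus a Bochner-type vanishing of the second fundamental form of deformed harmonic maps) is exactly the step you admit you cannot do, and it is not how the difficulty is resolved. The missing idea is an integrality (quantization) argument: by Lemma \ref{lem:3.3}, $\mathrm{Vol}(\rho_t)=\left|\int_M f_t^*\left(\frac{1}{n!}\omega^n\right)\right|$ for any $\rho_t$--equivariant map $f_t$, and $\left[f_t^*\left(\frac{1}{n!}\omega^n\right)\right]$ is a Chern class, hence lies in $H^{2n}(M,\bz)$; varying continuously in a discrete set, it is independent of $t$, so $\mathrm{Vol}(\rho_t)=\mathrm{Vol}(\rho_0)$ \emph{exactly}, maximality persists, and the first part applies with no analytic work (Corollary \ref{cor:8.2}). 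Second, your extension of the invariant to $\mathrm{Sp}(m,1)$ is ill-defined: $H^2_c(\mathrm{Sp}(m,1),\br)=0$ because quaternionic hyperbolic space carries no invariant $2$--form (the three local K\"ahler forms are rotated by the $\mathrm{Sp}(1)$ factor), so there is no bounded K\"ahler class to restrict, and restriction to a totally geodesic $\bh^m_\bc\subset\bh^m_\bh$ is not $\mathrm{Sp}(m,1)$--equivariant and yields no class in $H^{2n}_{c,b}(\mathrm{Sp}(m,1),\br)$. The paper's substitute is the twistor space $D=\mathrm{Sp}(m,1)/\mathrm{Sp}(m)\times\mathrm{U}(1)$ with its invariant pseudo-K\"ahler form $\omega_D$, giving $\omega_D^n\in H^{2n}_c(\mathrm{Sp}(m,1),\br)$, together with the invariant $4$--form $\alpha$ satisfying $\pi^*\alpha=\omega_D^2+d\beta$ \cite{DG}; in the equality case one uses \cite{CD} to choose a holomorphic \emph{horizontal} lift $F$ of the equivariant harmonic map, so that $F^*\omega_D\leq\omega_M$ pins down $\int_M F^*\omega_D^n=\mathrm{Vol}(M)$ and forces total geodesy. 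Without these two ingredients your proof of the second statement does not go through.
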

This paper is organized as follows: We review the simplicial volume, $\ell^1$--homology and continuous (bounded) cohomology in order to define the new invariant $\mathrm{Vol}(\rho)$ in Section \ref{sec:2}. We describe the basic properties of the volume invariant $\mathrm{Vol}(\rho)$ in Section \ref{sec:3}. Then, we devote ourselves to proving Theorem \ref{thm:1.2} for the case that $G$ is a semisimple Lie group of higher rank in Section \ref{sec:4}, $G$ is a simple Lie group of rank $1$ except for $\text{SO}(2,1)$ in Section \ref{sec:5} and $G$ is $\mathrm{SO}(2,1)$ in Section \ref{sec:6}.
We deal with a volume invariant for representations $\rho \co \Gamma \rightarrow \mathrm{SO}(m,1)$ of lattices $\Gamma$ in $\mathrm{SO}(n,1)$ in Section \ref{sec:7}. Lastly, we reformulate the rigidity phenomenon of uniform lattices of $\mathrm{SU}(n,1)$ in $\mathrm{SU}(m,1)$ or $\mathrm{Sp}(m,1)$ via the volume invariant in Section \ref{sec:8}.

\section{Preliminaries}\label{sec:2}

\subsection{Simplicial volume}

Let $M$ be an $n$--dimensional manifold. The simplicial $\ell^1$--norm $\| \cdot \|_1$ on the singular chain complex $C_\bullet(M,\mathbb{R})$ is defined by
the $\ell^1$--norm with respect to the basis given by all singular simplices.
The simplicial $\ell^1$--norm induces a $\ell^1$--seminorm on $H_\bullet(M,\mathbb{R})$ as follows:
$$\| \alpha \|_1 =\inf \| c \|_1$$
where $c$ runs over all singular cycles representing $\alpha \in H_\bullet(M,\mathbb{R})$.

For an oriented, connected, closed $n$--manifold $M$, the simplicial volume $\| M \|$ of $M$ is defined as the $\ell^1$--seminorm of the fundamental class $[M]$ in $H_n(M,\mathbb{R})$.
If $M$ is an oriented, connected, open $n$--manifold, then $M$ has a fundamental class $[M]$ in the locally finite homology $H^\mathrm{lf}_n(M,\mathbb{R})$.
The locally finite homology of $M$ is defined as the homology of the locally finite chain complex $C_\bullet^\mathrm{lf}(M,\mathbb{R})$.
More precisely, let $S_k(M)$ be the set of singular $k$--simplices of $M$ and $S^\mathrm{lf}_k(M)$ denote the set of all locally finite subsets of $S_k(M)$, that is, if $A \in S^\mathrm{lf}_k(M)$, any compact subset of $M$ intersects the image of only finitely many elements of $A$. Then, the locally finite chain complex $C_\bullet^\mathrm{lf}(M,\mathbb{R})$ is defined by
$$C_\bullet^\mathrm{lf}(M,\mathbb{R})= \left\{\sum_{\sigma \in A} a_\sigma \sigma \ \bigg| \ A \in S^\mathrm{lf}_\bullet(X) \text{ and }a_\sigma \in \mathbb{R} \right\}.$$

A $\ell^1$--seminorm on $H^\mathrm{lf}_\bullet(M,\mathbb{R})$ is
induced from the simplicial $\ell^1$--norm on the locally finite chain complex $C^\mathrm{lf}_\bullet(M,\mathbb{R})$ with respect to the basis given by all singular simplices. The simplicial volume $\| M \|$ of $M$ is defined as the $\ell^1$--seminorm of the locally finite fundamental class $[M]$ of $M$.

In addition, Gromov introduces the geometric simplicial volume of oriented, connected, open Riemannian manifolds. Fixing a metric on the standard $k$--simplex $\Delta^k$ by the Euclidean metric, the Lipschitz constant $\mathrm{Lip}(\sigma)$ of a singular simplex $\sigma \co \Delta^k \rightarrow M$ is defined. Subsequently, for a locally finite chain $c \in C^\mathrm{lf}_\bullet(M,\mathbb{R})$, define the Lipschitz constant $\mathrm{Lip}(c)$ of $c$ by the supremum over all Lipschitz constants of the simplices occurring in $c$.

The subcomplex $C^\mathrm{lf,Lip}_\bullet(M,\mathbb{R})$ of $C^\mathrm{lf}_\bullet(M,\mathbb{R})$ consisting of all chains with finite Lipschitz constant
induces the homology with Lipschitz locally finite support, denoted by $H^\mathrm{lf,Lip}_\bullet(M,\mathbb{R})$. Indeed, $H^\mathrm{lf,Lip}_\bullet(M,\mathbb{R})$ is isomorphic to $H^\mathrm{lf}_\bullet(M,\mathbb{R})$ \cite[Theorem 3.3]{LS09}. Hence, it has a distinguished generator $[M]_\mathrm{Lip}$ in $H^\mathrm{lf,Lip}_\bullet(M,\mathbb{R})$ corresponding to the locally finite fundamental class $[M]$ in $H^\mathrm{lf}_\bullet(M,\mathbb{R})$. The geometric simplicial volume of $M$ is defined as the $\ell^1$--seminorm of $[M]_\mathrm{Lip}$, denoted by $\| M \|_\mathrm{Lip}$.
Gromov \cite{Gr82} proves the proportionality principle for the geometric simplicial volume as follows.

\begin{thm}[Gromov]\label{thm:2.1}
Let $M$ be a closed Riemannian manifold and $N$ be a complete Riemannian manifold of
finite volume. If the universal covers of $M$ and $N$ are isometric, then
$$\frac{\|M\|_\mathrm{Lip}}{\mathrm{Vol}(M)}=\frac{\|N\|_\mathrm{Lip}}{\mathrm{Vol}(N)}.$$
\end{thm}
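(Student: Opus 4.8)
The plan is to show that the common value of the two ratios is an intrinsic invariant of the shared universal cover $X:=\widetilde M=\widetilde N$, so that it cannot depend on the particular finite-covolume quotient. Write $G=\mathrm{Isom}(X)$, equip $G$ with a Haar measure, and view $M=\Gamma\backslash X$ and $N=\Lambda\backslash X$ for the deck groups $\Gamma,\Lambda<G$, which act freely, properly and isometrically with finite covolume. Gromov's \emph{smearing} construction, carried out in measure homology, is the device that lets one transport cycles between $M$ and $N$ through $G$ while keeping precise track of the $\ell^1$--seminorm and of the Lipschitz constant.

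First I would replace the singular Lipschitz locally finite complex by the corresponding \emph{measure chain complex} $\mathcal C^{\mathrm{lf,Lip}}_\bullet(\cdot)$ of signed measures of finite total variation on the space of Lipschitz simplices, normed by total variation. Using the isomorphism $H^{\mathrm{lf,Lip}}_\bullet\cong H^{\mathrm{lf}}_\bullet$ of \cite[Theorem 3.3]{LS09} together with the measure-homology analogue of L\"oh's isometric isomorphism between measure and singular homology, one identifies $\|M\|_{\mathrm{Lip}}$ and $\|N\|_{\mathrm{Lip}}$ with the total-variation seminorms of the respective fundamental classes in measure homology. This reduces the problem to comparing two seminorms that can both be computed on the \emph{same} space $X$ once averaging over $G$ is available.

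Next I would use smearing to transport cycles between the two quotients through $G$. Given a Lipschitz locally finite fundamental cycle of $N$ built from straight (geodesic) simplices, I would lift its simplices to $X$ and average their $G$--translates against the finite $G$--invariant measure on $\Lambda\backslash G$, then push the resulting $G$--invariant measure on the space of Lipschitz simplices of $X$ down to $M$. Because straight simplices are determined $G$--equivariantly by their vertices and $G$ acts by isometries, this produces a measure fundamental cycle of $M$ whose total variation is exactly $\mathrm{Vol}(M)/\mathrm{Vol}(N)$ times the total variation of the cycle we started with on $N$; the volume ratio appears because passing from a $\Lambda$--invariant to a $\Gamma$--invariant distribution of simplices rescales total mass by the ratio of covolumes, while the straight-simplex data are $G$--invariant. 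This yields $\|M\|_{\mathrm{Lip}}\le (\mathrm{Vol}(M)/\mathrm{Vol}(N))\,\|N\|_{\mathrm{Lip}}$, and exchanging the roles of $M$ and $N$ gives the reverse inequality; together they prove the asserted equality of ratios.

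The step I expect to be the main obstacle is handling the noncompactness of $N$. One must verify that the smeared measure descends to a genuinely \emph{locally finite} cycle of \emph{finite Lipschitz constant} representing $[N]_{\mathrm{Lip}}$, that its total variation is finite, and that the measure-homology seminorm still computes $\|\cdot\|_{\mathrm{Lip}}$ in the open, finite-volume regime. Controlling local finiteness and a uniform Lipschitz bound in the cusps, and justifying that the $G$--averaging commutes with the boundary operator so that both the cycle condition and the identification of homology classes survive the passage to the quotient, is precisely where the finite-volume and Lipschitz hypotheses enter; this is the technical content underlying \cite{LS09} and the only place where the argument must go beyond the classical closed case of Gromov and Thurston.
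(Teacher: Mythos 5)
The paper itself gives no proof of this theorem: it is stated as a quotation from Gromov \cite{Gr82}, the rigorous treatment in the noncompact Lipschitz setting being L\"oh--Sauer \cite{LS09}, so your argument has to be measured against that literature rather than against an internal proof. Your outline is the standard smearing strategy, and one direction of it is essentially right: lifting the simplices of a straight Lipschitz locally finite fundamental cycle $c$ of $N$, averaging over $G$ against the finite invariant measure on $\Gamma\backslash G$ (with a signed average if $G$ contains orientation-reversing isometries, and using unimodularity of $G$ so that the construction is independent of the chosen lifts), and pushing down to the \emph{closed} manifold $M$ produces a measure cycle of total variation at most (not exactly; cancellation can only help) $\bigl(\mathrm{Vol}(M)/\mathrm{Vol}(N)\bigr)\|c\|_1$. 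Evaluating against the volume form --- this is where the Lipschitz hypothesis and \cite[Proposition 4.4]{LS09} enter, to know that $c$ integrates the volume form of $N$ to exactly $\mathrm{Vol}(N)$ --- identifies its homology class, and L\"oh's isometric isomorphism between measure homology and singular homology on the closed manifold $M$ then yields $\|M\|_{\mathrm{Lip}}/\mathrm{Vol}(M)\leq\|N\|_{\mathrm{Lip}}/\mathrm{Vol}(N)$.

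The genuine gap is the clause ``exchanging the roles of $M$ and $N$ gives the reverse inequality.'' The hypotheses are not symmetric: $M$ is closed and $N$ is open, and the two directions need different tools. Smearing a finite fundamental cycle of $M$ over $\Lambda\backslash G$ does produce a Lipschitz measure cycle on $N$ with the expected total variation, but to deduce anything about $\|N\|_{\mathrm{Lip}}$ you must convert that measure cycle back into a genuine locally finite singular Lipschitz cycle representing $[N]_{\mathrm{Lip}}$ without increasing the seminorm --- that is, you need an isometric (or at least norm-nonincreasing) comparison between Lipschitz locally finite \emph{singular} homology and its \emph{measure-chain} counterpart on the noncompact manifold $N$. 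This discretization step is the heart of the theorem: it is precisely what L\"oh and Sauer construct, their argument uses nonpositive curvature (straightening) and a properness condition on the measures in the ends of $N$, and it does not follow from the closed case, nor from \cite[Theorem 3.3]{LS09}, which compares two singular theories rather than singular with measure chains. Your proposal correctly names this as the main obstacle but then defers it wholesale, so as a proof it stops exactly at the decisive point; note also that as stated (with no curvature assumption on the common universal cover) the theorem is Gromov's general claim, which goes beyond what the deferred reference establishes, although the nonpositively curved case it does cover is all that this paper ever uses.
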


The simplicial volume of a smooth manifold gives a lower bound of its minimal volume. Hence,
the question was naturally raised as to which manifolds have nonzero simplicial volumes. Gromov \cite{Gr82} and Thurston \cite{Th78} first
show that the simplicial volume of complete Riemannian manifolds of finite volume with pinched negative sectional curvature is nonzero.
Moreover, it is shown that closed locally symmetric spaces of noncompact type have positive simplicial volumes \cite{LS06}.
In contrast, the simplicial volume of open, complete locally symmetric spaces of noncompact type with finite volume may vanish. For instance,
the simplicial volume of locally symmetric spaces of noncompact type with $\mathbb{Q}$--rank at least $3$ vanishes \cite{LS09}. On the other hand, it turns out that the simplicial volume of $\mathbb{Q}$--rank $1$ locally symmetric spaces covered by a product of $\mathbb{R}$--rank $1$ symmetric spaces is positive \cite{KK} and moreover, it is equal to their geometric simplicial volume \cite{BKK} for amenable boundary group cases. The $\mathbb{Q}$--rank $2$ cases remain open.

\subsection{$\ell^1$--homology}

Let $M$ be an oriented, connected $n$--manifold.
The $\ell^1$--chain complex of $M$ is the $\ell^1$--completion  $C_\bullet^{\ell^1}(M,\mathbb{R})$  of
the normed chain complex $C_\bullet(M,\mathbb{R})$ with respect to the simplicial $\ell^1$--norm $\| \cdot \|_1$. Then, the $\ell^1$--homology $H^{\ell^1}_\bullet(M,\mathbb{R})$ of $M$ is defined as the homology of $\ell^1$--chain complex of $M$,
$$H^{\ell^1}_\bullet(M,\mathbb{R}) = H_\bullet ( C_\bullet^{\ell^1}(M,\mathbb{R})).$$

The natural inclusion $C_\bullet(M,\mathbb{R}) \hookrightarrow C_\bullet^{\ell^1}(M,\mathbb{R})$ induces a comparison map $H_\bullet(M,\mathbb{R}) \rightarrow H_\bullet^{\ell^1}(M,\mathbb{R})$. Note that this map is an isometric inclusion because $C_\bullet(M,\mathbb{R})$ is a dense subcomplex of $C_\bullet^{\ell^1}(M,\mathbb{R})$ \cite[Proposition 2.4]{Lo08}.

Similarly, inclusions $C_\bullet(M,\mathbb{R}) \subset C_\bullet^\mathrm{lf}(M,\mathbb{R}) \cap C_\bullet^{\ell^1}(M,\mathbb{R}) \subset C_\bullet^{\ell^1}(M,\mathbb{R})$ imply that the middle complex is dense in $C_\bullet^{\ell^1}(M,\mathbb{R})$. Hence, the induced map $H_\bullet (C_\bullet^\mathrm{lf}(M,\mathbb{R}) \cap C_\bullet^{\ell^1}(M,\mathbb{R})) \rightarrow H^{\ell^1}_\bullet(M,\mathbb{R})$ is an isometric inclusion.
From this point of view, the simplicial volume of $M$ can be computed in terms of the $\ell^1$--homology of $M$ as follows:
$$ \| M \| = \inf \{ \| \alpha \|_1 \text{ }|\text{ }\alpha \in [M]^{\ell^1} \subset H^{\ell^1}_n(M,\mathbb{R}) \},$$
where $[M]^{\ell^1}$ is the set of
all $\ell^1$--homology classes that are represented by at least one locally finite fundamental cycle.

In a similar way, the geometric simplicial volume of $M$ is computed by
$$ \| M \|_\mathrm{Lip} = \inf \{ \| \alpha \|_1 \text{ }|\text{ }\alpha \in [M]^{\ell^1}_\mathrm{Lip} \subset H^{\ell^1}_n(M,\mathbb{R}) \},$$
where $[M]^{\ell^1}_\mathrm{Lip}$ is the set of
all $\ell^1$--homology classes that are represented by at least one locally finite fundamental cycle with finite Lipschitz constant. We refer the reader to \cite[Section 6]{Lo08} for  more detailed explanations.

\subsection{Continuous bounded cohomology}

Let $G$ be a topological group. Consider the continuous cocomplex $C^\bullet_c(G,\mathbb{R})$ with the homogeneous coboundary operator, where $$C^k_c(G,\mathbb{R})=\{ f \co G^{k+1} \rightarrow \mathbb{R}\text{ }|\text{ }f \text{ is continuous} \}.$$
The action of $G$ on $C^k_c(G,\mathbb{R})$ is given by
$$(g\cdot f)(g_0,\ldots,g_k)=f(g^{-1}g_0,\ldots,g^{-1}g_k).$$
The continuous cohomology $H^\bullet_c(G,\mathbb{R})$ of $G$ with trivial coefficients is defined as the cohomology of the $G$--invariant continuous cocomplex $C^\bullet_c(G,\mathbb{R})^G$.

For a cochain $f\co G^{k+1} \rightarrow \mathbb{R}$, define its sup norm by
$$\|f\|_\infty = \sup \{ |f(g_0,\ldots,g_k)|\text{ }|\text{ } (g_0,\ldots,g_k)\in G^{k+1}\}.$$
The sup norm turns $C^\bullet_c(G,\mathbb{R})$ into normed real vector spaces. The continuous bounded cohomology $H^\bullet_{c,b}(G,\mathbb{R})$ of $G$ is defined as the cohomology of the subcocomplex $C^\bullet_{c,b}(G,\mathbb{R})^G$ of
$G$--invariant continuous bounded cochains in $C^\bullet_c(G,\mathbb{R})^G$.
The inclusion of $C^\bullet_{c,b}(G,\mathbb{R})^G \subset C^\bullet_c(G,\mathbb{R})^G$ induces a comparison map $c \co H^\bullet_{c,b}(G,\mathbb{R}) \rightarrow H^\bullet_c(G,\mathbb{R})$. The sup norm induces seminorms on both $H^\bullet_c(G,\mathbb{R})$ and $H^\bullet_{c,b}(G,\mathbb{R})$, denoted by $\| \cdot \|_\infty$. Note that for $\beta \in H^k_c(G,\mathbb{R})$,
$$ \|\beta \|_\infty = \inf \{ \| \beta_b \|_\infty \text{ }|\text{ } \beta_b \in H^k_{c,b}(G,\mathbb{R}) \text{ and } c(\beta_b)=\beta \}.$$

For a connected semisimple Lie group $G$ with trivial center and no compact factors, the continuous cohomology $H^\bullet_c(G,\mathbb{R})$ is isomorphic to the set of $G$--invariant differential forms on the associated symmetric space $X$ according to the Van Est isomorphism. In particular,
the continuous cohomology of $G$ in the top degree is generated by the $G$--invariant volume form $\omega$ on $X$.

Let $\Gamma_0$ be a uniform lattice in $G$ and $M=\Gamma_0 \backslash X$.
Bucher-Karlsson \cite{Bu08} reformulates a proof of Gromov's proportionality principle in the language of continuous bounded cohomology and moreover, shows that $$\frac{\| M \|}{\mathrm{Vol}(M)}=\frac{1}{\| \omega \|_\infty}.$$
It is easy to see that $\|M\|_\mathrm{Lip}=\|M\|$ because $M$ is closed. Let $\Gamma$ be an arbitrary lattice in $G$ and $N=\Gamma \backslash X$. It follows from Gromov's proportionality principle that
\begin{eqnarray}\label{eqn:2.1}
\frac{\| N \|_\mathrm{Lip}}{\mathrm{Vol}(N)}=\frac{\| M \|_\mathrm{Lip}}{\mathrm{Vol}(M)}=\frac{\| M \|}{\mathrm{Vol}(M)}=\frac{1}{\| \omega \|_\infty}.
\end{eqnarray}
Note that the proportionality principle fails in general for the ordinary simplicial volume.

\section{Volume invariant}\label{sec:3}

In this section, we define a new invariant $\mathrm{Vol}(\rho)$ and explore its properties.
Throughout the paper, $G$ denotes a connected semisimple Lie group with trivial center and no compact factors,
and $\Gamma$ denotes a lattice in $G$. As usual, $X$ denotes the associated symmetric $n$--space and $M$ denotes the locally symmetric space $\Gamma\backslash X$. The symbol $\omega$ denotes the $G$--invariant volume form on $X$.

\subsection{Volume invariant}\label{sec:3.1}

Let $\rho \co \Gamma \rightarrow G$ be a representation. Then, $\rho$ induces canonical pullback map
$\rho^*_c \co H^\bullet_c(G,\mathbb{R}) \rightarrow H^\bullet(\Gamma,\mathbb{R})$ in continuous cohomology. This canonical pullback map is realized on the level of cocomplex as follows:
For a continuous map $f \co G^{k+1}\rightarrow \mathbb{R}$, define a map $\rho^*(f) \co \Gamma^{k+1} \rightarrow \mathbb{R}$ by
$$\rho^*(f)(\gamma_0,\ldots,\gamma_k)=f(\rho(\gamma_0),\ldots,\rho(\gamma_k)),$$
for $(\gamma_0,\ldots,\gamma_k) \in \Gamma^{k+1}$.
This defines a chain map $\rho^* \co C^\bullet_c(G,\mathbb{R}) \rightarrow C^\bullet(\Gamma,\mathbb{R})$.
Moreover, $\rho^*$ maps $G$--invariant cochains to $\Gamma$--invariant cochains and hence, it induces a homomorphism $\rho^*_c \co H^\bullet_c(G,\mathbb{R}) \rightarrow H^\bullet(\Gamma,\mathbb{R})$ in continuous cohomology. In the same manner, $\rho$ induces a homomorphism $\rho^*_b \co H^\bullet_{c,b}(G,\mathbb{R}) \rightarrow H^\bullet_b(\Gamma,\mathbb{R})$ in continuous bounded cohomology.

For a connected semisimple Lie group $G$ with trivial center and no compact factors, it is well known that the $G$--invariant volume form $\omega \in H^n_c(G,\mathbb{R})$ is bounded. In other words, there exists a continuous bounded cohomology class $\omega_b \in H^n_{c,b}(G,\mathbb{R})$ such that $c(\omega_b)=\omega$ for the comparison map $c \co H^n_{c,b}(G,\mathbb{R}) \rightarrow H^n_c(G,\mathbb{R})$.
By pulling back $\omega_b$ by $\rho$, we obtain a bounded cohomology class $\rho^*_b(\omega_b)\in H^n_b(\Gamma,\mathbb{R})$. Subsequently, we identify the bounded cohomology class $\rho^*_b(\omega_b)$ in $H^n_b(\Gamma,\mathbb{R})$ with a bounded cohomology class in $H^n_b(M,\mathbb{R})$ via the canonical isomorphism between $H^\bullet_b(\Gamma,\mathbb{R})$ and $H^\bullet_b(M,\mathbb{R})$ \cite{Gr82}.
Then, the bounded cohomology class $\rho^*_b(\omega_b)$ can be evaluated on $\ell^1$--homology classes in $H^{\ell^1}_n(M,\mathbb{R})$ by the Kronecker products
$$ \langle\cdot ,\cdot \rangle \co H^\bullet_b(M,\mathbb{R}) \otimes H^{\ell^1}_\bullet(M,\mathbb{R}) \rightarrow \mathbb{R}.$$

Now, we define a \emph{volume invariant $\mathrm{Vol}(\rho)$ of $\rho$} by
$$\mathrm{Vol}(\rho) = \inf \{ |\langle \rho^*_b(\omega_b),\alpha \rangle| \text{ }|\text{ }c(\omega_b)=\omega \text{ and } \alpha \in [M]^{\ell^1}_\mathrm{Lip} \}.$$

It is easy to see that the volume invariant $\mathrm{Vol}(\rho)$ is finite since $\omega$ is bounded and the geometric simplicial volume of $M$ is strictly positive. Furthermore, a upper bound on the volume invariant $\mathrm{Vol}(\rho)$ can be obtained from its definition immediately as follows.

\begin{prop}\label{pro:3.1}
Let $\rho \co \Gamma \rightarrow G$ be a representation. Then, the volume invariant $\mathrm{Vol}(\rho)$ of $\rho$ satisfies an inequality $$ \mathrm{Vol}(\rho) \leq \mathrm{Vol}(M).$$
\end{prop}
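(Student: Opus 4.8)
The plan is to extract the bound directly from the definition of $\mathrm{Vol}(\rho)$ by combining three ingredients already assembled in the excerpt: the pairing estimate between bounded cohomology and $\ell^1$--homology, the relationship between the seminorm of $\omega$ and the simplicial volume, and Gromov's proportionality principle. First I would fix any bounded representative $\omega_b \in H^n_{c,b}(G,\mathbb{R})$ with $c(\omega_b)=\omega$ and any class $\alpha \in [M]^{\ell^1}_\mathrm{Lip}$. Since $\rho^*_b$ is norm-nonincreasing (the pullback $\rho^*$ on cochains does not increase the sup norm, as it merely precomposes with $\rho$), we have $\|\rho^*_b(\omega_b)\|_\infty \leq \|\omega_b\|_\infty$. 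Then the Kronecker pairing satisfies the fundamental duality estimate
\begin{eqnarray*}
|\langle \rho^*_b(\omega_b),\alpha\rangle| \leq \|\rho^*_b(\omega_b)\|_\infty \cdot \|\alpha\|_1 \leq \|\omega_b\|_\infty \cdot \|\alpha\|_1.
\end{eqnarray*}

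Next I would take infima on the right-hand side in a compatible way. Taking the infimum over all $\alpha \in [M]^{\ell^1}_\mathrm{Lip}$ of $\|\alpha\|_1$ yields exactly the geometric simplicial volume $\|M\|_\mathrm{Lip}$, by the characterization recalled in the $\ell^1$--homology subsection. Taking the infimum over all bounded representatives $\omega_b$ with $c(\omega_b)=\omega$ of $\|\omega_b\|_\infty$ yields precisely the seminorm $\|\omega\|_\infty$ in $H^n_c(G,\mathbb{R})$, by the formula relating $\|\beta\|_\infty$ to its bounded representatives. Since the infimum defining $\mathrm{Vol}(\rho)$ ranges jointly over both $\omega_b$ and $\alpha$, I can bound it by the product of the two separate infima, giving
\begin{eqnarray*}
\mathrm{Vol}(\rho) \leq \|\omega\|_\infty \cdot \|M\|_\mathrm{Lip}.
\end{eqnarray*}

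Finally I would invoke Gromov's proportionality principle in the form of Equation (\ref{eqn:2.1}), which states that $\|M\|_\mathrm{Lip} = \mathrm{Vol}(M)/\|\omega\|_\infty$ for an arbitrary lattice $\Gamma$ with $M = \Gamma\backslash X$. Substituting this into the previous inequality, the factors of $\|\omega\|_\infty$ cancel and I obtain $\mathrm{Vol}(\rho) \leq \mathrm{Vol}(M)$, as desired. The only subtlety is the legitimacy of splitting the joint infimum into a product of independent infima; this is valid because the two parameters $\omega_b$ and $\alpha$ vary independently and the bound above holds uniformly over every pair, so the infimum of the product is at most the product of the infima. I do not expect a serious obstacle here, since the inequality is essentially formal once the correct pairing norm estimate and the proportionality identity are in place; the substantive content of the theorem lies entirely in the equality case, which is treated separately in later sections.
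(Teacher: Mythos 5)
Your proposal is correct and follows essentially the same route as the paper's own proof: the Kronecker pairing estimate combined with the norm-nonincreasing property of $\rho^*_b$, splitting the joint infimum into $\|\omega\|_\infty \cdot \|M\|_\mathrm{Lip}$, and concluding via the proportionality identity in Equation (\ref{eqn:2.1}). The one point you flag as a subtlety---bounding the joint infimum by the product of the separate infima---is handled identically (and equally briefly) in the paper.
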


\begin{proof}
For a continuous cohomology class $\beta \in H^n_c(G,\mathbb{R})$,
$$ \| \beta \|_\infty = \inf \{ \| \beta_b \|_\infty \ | \ c(\beta_b)=\beta \},$$
where $c \co H^n_{c,b}(G,\mathbb{R}) \rightarrow H^n_c(G,\mathbb{R})$ is the comparison map.
From the definition of the volume invariant $\mathrm{Vol}(\rho)$, we have
{\setlength\arraycolsep{2pt}
\begin{eqnarray*}
\mathrm{Vol}(\rho) &=& \inf \{ |\langle \rho^*_b(\omega_b),\alpha \rangle| \ | \ c(\omega_b)=\omega \text{ and } \alpha \in [M]^{\ell^1}_\mathrm{Lip} \} \\
&\leq& \inf \{ \| \rho^*_b(\omega_b) \|_\infty \cdot \| \alpha \|_1 \ | \ c(\omega_b)=\omega \text{ and } \alpha \in [M]^{\ell^1}_\mathrm{Lip} \} \\
&\leq& \inf \{ \|\omega_b\|_\infty \ | \ c(\omega_b)=\omega \} \cdot \inf \{ \|\alpha \|_1 \ | \ \alpha\in [M]^{\ell^1}_\mathrm{Lip} \} \\
&=& \|\omega\|_\infty \cdot \| M \|_\mathrm{Lip} \\
&=& \mathrm{Vol}(M).
\end{eqnarray*}}
The last equation comes from Equation (\ref{eqn:2.1}).
\end{proof}

\begin{rem}
If we define the volume invariant $\mathrm{Vol}(\rho)$ via $[M]^{\ell^1}$ instead of $[M]^{\ell^1}_\mathrm{Lip}$, we obtain the following inequality in a similar way as above
$$\mathrm{Vol}(\rho) \leq \|\omega\|_\infty \cdot \| M \|.$$
If $\Gamma$ is a lattice of $\mathbb{Q}$--rank at least $3$, it is known that $\| M \|=0$ \cite{LS09}. This implies that $\mathrm{Vol}(\rho)=0$ for all representations $\rho \co \Gamma \rightarrow G$.
Then, this volume invariant cannot detect discrete, faithful representations. This is the reason why we use the notion of the geometric simplicial volume of $M$ to define the volume invariant $\mathrm{Vol}(\rho)$ instead of the ordinary simplicial volume of $M$.
\end{rem}

\subsection{Volume invariant and $\rho$--equivariant map}

Goldman \cite{Go92} defined the volume invariant $\upsilon(\rho)$ by using a section $s \co M \rightarrow E_\rho$. Indeed, a section $s \co M \rightarrow E_\rho$ corresponds to a $\rho$--equivariant map $s \co X \rightarrow X$.
In a similar way, the volume invariant $\mathrm{Vol}(\rho)$ can be reformulated in terms of $\rho$--equivariant map. In this section, we devote ourselves to explaining this and verifying $\mathrm{Vol}(\rho)=|\upsilon(\rho)|$ for representations $\rho \co \Gamma \rightarrow G$ of uniform lattices $\Gamma$.

First, we describe another useful cocomplexes for both continuous and continuous bounded cohomology of $G$. For a nonnegative integer $k$, define
$$C^k_c(X,\mathbb{R}) = \{ f \co X^{k+1} \rightarrow \mathbb{R} \ | \ f \text{ is continuous} \}.$$
Consider the sup norm $\| \cdot \|_\infty$ on $C^k_c(X,\mathbb{R})$ defined by
$$\| f \|_\infty = \sup \{ |f(x_0,\ldots,x_k)| \ | \ (x_0,\ldots,x_k)\in X^{k+1} \}.$$
Let $C^k_{c,b}(X,\mathbb{R})$ be the subspace consisting of continuous bounded $k$--cochains.
Then, $C^\bullet_c(X,\mathbb{R})$ with the homogeneous coboundary operator becomes a cochain complex. Moreover, the homogeneous coboundary operator on $C^\bullet_c(X,\mathbb{R})$ restricts to $C^\bullet_{c,b}(X,\mathbb{R})$.
The $G$--action on $C^\bullet_c(X,\mathbb{R})$ is defined analogously to the one on $C^\bullet_c(G,\mathbb{R})$.

It is a standard fact that the continuous cohomology $H^\bullet_c(G,\mathbb{R})$ of $G$ is isometrically isomorphic to the cohomology of the cocomplex $C^\bullet_c(X,\mathbb{R})^G$. For a proof, see \cite[Chapter 3]{Gu80}. The continuous bounded cohomology $H^\bullet_{c,b}(G,\mathbb{R})$ of $G$  is isometrically isomorphic to the cohomology of the subcocomplex $C^\bullet_{c,b}(X,\mathbb{R})^G$ of $C^\bullet_c(X,\mathbb{R})^G$.
The comparison map $c \co H^\bullet_{c,b}(G,\mathbb{R})\rightarrow H^\bullet_c(G,\mathbb{R})$ is induced by the natural inclusion $C^\bullet_{c,b}(X,\mathbb{R})^G \subset C^\bullet_c(X,\mathbb{R})^G$.
Furthermore, both $H^\bullet(\Gamma,\mathbb{R})$ and $H^\bullet_b(\Gamma,\mathbb{R})$ are isometrically isomorphic to the cohomologies of cocomplexes $C^\bullet_c(X,\mathbb{R})^\Gamma$ and  $C^\bullet_{c,b}(X,\mathbb{R})^\Gamma$ respectively. See \cite[Corollary 7.4.10]{Mo01} for a detailed proof.

We describe here an explicit map on the level of cocomplex which induces an isometric isomorphism between $H^\bullet(C^\bullet_c(X,\mathbb{R})^G)$ and $H^\bullet_c(G,\mathbb{R})$. Let us fix a base point $o\in X$.
Define a map $\phi_o \co C^k_c(X,\mathbb{R}) \rightarrow C^k_c(G,\mathbb{R})$ by
$$\phi_o(f)(g_0,\ldots,g_k)=f(g_0\cdot o,\ldots, g_k\cdot o).$$
The map $\phi_o$ is a $G$-morphism between two cocomplexes and restricts to the subcocomplexes of continuous bounded cochains. Then, $\phi_o$ induces an isometric isomorphism $\phi^G_c \co H^\bullet(C^\bullet_c(X,\mathbb{R})^G) \rightarrow H^\bullet_c(G,\mathbb{R})$ in continuous cohomology.
Note that $\phi^G_c$ is independent of the choice of the base point $o\in X$ even though $\phi_o$ depends on $o\in X$. Hence, we denote the induced map in continuous cohomology by $\phi^G_c$ without the subscript ``o". In a similar way, the map $\phi_o$ induces isometric isomorphisms, $\phi^G_b \co H^\bullet(C^\bullet_{c,b}(X,\mathbb{R})^G) \rightarrow H^\bullet_{c,b}(G,\mathbb{R})$ and
$\phi^\Gamma_b \co H^\bullet(C^\bullet_{c,b}(X,\mathbb{R})^\Gamma) \rightarrow H^\bullet_b(\Gamma,\mathbb{R})$.

Let $s \co X \rightarrow X$ be a $\rho$--equivariant continuous map for a representation $\rho \co \Gamma \rightarrow G$. Then, $s$ induces a map  $s^* \co C^k_c(X,\mathbb{R}) \rightarrow C^k_c(X,\mathbb{R})$ defined by $$s^*(f)(x_0,\ldots,x_k)=f(s(x_0),\ldots,s(x_k)),$$
for a cochain $f$ in $C^k_c(X,\mathbb{R})$.
Due to the $\rho$--equivariance and continuity of $s \co X\rightarrow X$, it follows that $s^*$ maps $G$--invariant continuous (bounded) cochains to $\Gamma$--invariant continuous (bounded) cochains. Hence, $s^*$ induces  homomorphisms $s^*_c \co H^\bullet( C^\bullet_c (X,\mathbb{R})^G) \rightarrow  H^\bullet( C^\bullet_c (X,\mathbb{R})^\Gamma)$ in continuous cohomology and $s^*_b \co H^\bullet( C^\bullet_{c,b} (X,\mathbb{R})^G) \rightarrow  H^\bullet( C^\bullet_{c,b} (X,\mathbb{R})^\Gamma)$ in continuous bounded cohomology. Now, consider the following diagram:
$$ \xymatrixcolsep{4pc}\xymatrix{
C^\bullet_c(X,\mathbb{R})^G \ar[r]^-{\phi_o} &
C^\bullet_c(G,\mathbb{R})^G \\
C^\bullet_{c,b}(X,\mathbb{R})^G \ar[r]^-{\phi_o} \ar[d]_-{s^*} \ar[u]^-{i} &
C^\bullet_{c,b}(G,\mathbb{R})^G \ar[d]^-{\rho^*}  \ar[u]_-{i} \\
C^\bullet_{c,b}(X,\mathbb{R})^\Gamma \ar[r]^-{\phi_o} &
C^\bullet_b(\Gamma,\mathbb{R})^\Gamma.
}$$

In this diagram, it is clear that the upper diagram commutes.
On the other hand, the lower diagram does not commute. However, one can notice that it commutes in cohomology as follows.
Let $f\in C^k_{c,b}(X,\mathbb{R})^G$ be a $G$--invariant continuous bounded cocycle. Define $b \in C^{k-1}_b(\Gamma,\mathbb{R})$ by
$$b(\gamma_0,\ldots,\gamma_{k-1})=\sum_{i=0}^{k-1} (-1)^i f(\rho(\gamma_0) \cdot o, \ldots, \rho(\gamma_i) \cdot o, \rho(\gamma_i) \cdot s(o),\ldots,\rho(\gamma_{k-1})\cdot s(o)).$$
Then, $b$ is a $\Gamma$--invariant bounded cochain since $f$ is a $G$--invariant continuous bounded cocycle.
Also, it is a straightforward computation that
$$(\rho^* \circ \phi_o - \phi_o \circ s^*)(f)(\gamma_0,\ldots,\gamma_k) = \delta b (\gamma_0,\ldots,\gamma_k).$$ This implies that the lower diagram commutes in the cohomology level and hence, we have the following commutative diagram:
$$ \xymatrixcolsep{4pc}\xymatrix{
H^\bullet(C^\bullet_c(X,\mathbb{R})^G) \ar[r]^-{\phi^G_c}_-{\cong} &
H^\bullet_c(G,\mathbb{R}) \\
H^\bullet(C^\bullet_{c,b}(X,\mathbb{R})^G) \ar[r]^-{\phi^G_b}_-{\cong} \ar[d]_-{s^*_b} \ar[u]^-{c} &
H^\bullet_{c,b}(G,\mathbb{R}) \ar[d]^-{\rho^*_b}  \ar[u]_-{c} \\
H^\bullet(C^\bullet_{c,b}(X,\mathbb{R})^\Gamma) \ar[r]^-{\phi^\Gamma_b}_-{\cong} &
H^\bullet_b(\Gamma,\mathbb{R})
}$$

Each cohomology class in $H^\bullet_c(G,\br)$, $H^\bullet_{c,b}(G,\br)$ and $H^\bullet_b(\Gamma,\br)$ is canonically identified with a cohomology class in $H^\bullet(C^\bullet_c(X,\mathbb{R})^G)$, $H^\bullet(C^\bullet_{c,b}(X,\mathbb{R})^G)$ and $H^\bullet(C^\bullet_{c,b}(X,\mathbb{R})^\Gamma)$ via the isomorphisms induced by $\phi_o$ respectively.

Let $\omega_b$ be a continuous bounded cohomology class in $H^n_{c,b}(G,\br)$ representing the $G$--invariant volume form $\omega \in H^n_c(G,\br)$. We use the same notations $\omega$ and $\omega_b$ for the cohomology class in $H^\bullet(C^\bullet_c(X,\mathbb{R})^G)$ and $H^\bullet(C^\bullet_{c,b}(X,\mathbb{R})^G)$ identified with $\omega \in H^n_c(G,\mathbb{R})$ and $\omega_b \in H^n_{c,b}(G,\mathbb{R})$ via $\phi^G_c$ and $\phi^G_b$, respectively.

Noting that the cohomologies $H^\bullet(C^\bullet_{c,b}(X,\mathbb{R})^\Gamma)$ and $H^\bullet_b(\Gamma,\mathbb{R})$ are canonically identified with the bounded cohomology $H^\bullet_b(M,\mathbb{R})$, one can conclude that $s^*_b(\omega_b) = \rho^*_b(\omega_b)$ in $H^n_b(M,\br)$ via the canonical isomorphisms. Hence,
$$\{ s^*_b(\omega_b) \in H^n_b(M,\br) \ | \ c(\omega_b) =\omega \} =\{ \rho^*_b(\omega_b) \in H^n_b(M,\br) \ | \ c(\omega_b) =\omega \}.$$

Therefore, the volume invariant $\mathrm{Vol}(\rho)$ can be reformulated in terms of $\rho$--equivariant map as follows:
$$\mathrm{Vol}(\rho)= \inf \{|\langle s^*_b(\omega_b),\alpha \rangle| \ | \ c(\omega_b)=\omega \text{ and } \alpha \in [M]^{\ell^1}_\mathrm{Lip} \}.$$
Note that the above reformulation of the volume invariant $\mathrm{Vol}(\rho)$ is independent of the choice of $\rho$--equivariant map $s \co X \rightarrow X$
as observed.

To define the volume invariant $\upsilon(\rho)$, Goldman \cite{Go92} uses a smooth section of the associated bundle. The reformulation of the volume invariant $\mathrm{Vol}(\rho)$ in terms of $\rho$--equivariant map makes it possible to verify the relation between two invariants $\upsilon(\rho)$ and $\mathrm{Vol}(\rho)$.

\begin{lemma}\label{lem:3.3}
Let $\Gamma$ be a uniform lattice in $G$ and $\rho \co \Gamma \rightarrow G$ be a representation. Then,
$$\mathrm{Vol}(\rho) = |\upsilon (\rho)| = \left| \int_M s^* \omega \right|,$$
where $s \co M \rightarrow E_\rho$ is a smooth section of the associated bundle $E_\rho$.
\end{lemma}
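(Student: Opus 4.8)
The plan is to exploit the fact that for a uniform lattice the manifold $M = \Gamma\backslash X$ is closed, which collapses both infima in the definition of $\mathrm{Vol}(\rho)$ to a single value that one then matches with Goldman's integral.

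First I would analyze the set $[M]^{\ell^1}_\mathrm{Lip}$ for closed $M$. Since $M$ is compact, every locally finite chain is finite, so a locally finite fundamental cycle is just an ordinary singular fundamental cycle; moreover a smooth (e.g.\ simplicial) fundamental cycle has finite Lipschitz constant, so the set is nonempty. Every fundamental cycle with finite Lipschitz constant represents the ordinary fundamental class $[M] \in H_n(M,\mathbb{R})$, hence maps to the single class $\iota_*[M]$ under the isometric comparison inclusion $\iota_* \co H_n(M,\mathbb{R}) \to H^{\ell^1}_n(M,\mathbb{R})$ recalled in Section \ref{sec:2}. Thus $[M]^{\ell^1}_\mathrm{Lip} = \{\iota_*[M]\}$ is a single point and the infimum over $\alpha$ disappears.

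Next I would use the compatibility of the Kronecker pairing with the comparison maps. For any bounded class $\beta_b \in H^n_b(M,\mathbb{R})$, evaluating $\beta_b$ on $\iota_*[M]$ amounts to evaluating a bounded cocycle on an ordinary fundamental cycle, which equals $\langle c(\beta_b),[M]\rangle$, the ordinary Kronecker pairing with $c \co H^n_b(M,\mathbb{R}) \to H^n(M,\mathbb{R})$. Applying this with $\beta_b = s^*_b(\omega_b)$ and using that the comparison commutes with pullback, i.e.\ $c \circ s^*_b = s^*_c \circ c$ (read off from the commutative diagram above), gives $\langle s^*_b(\omega_b),\iota_*[M]\rangle = \langle s^*_c(\omega),[M]\rangle$, independently of the choice of bounded representative $\omega_b$ with $c(\omega_b)=\omega$. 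This disposes of the remaining infimum, leaving $\mathrm{Vol}(\rho) = |\langle s^*_c(\omega),[M]\rangle|$.

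Finally I would identify this pairing with the differential-geometric integral. Under the Van Est isomorphism $H^\bullet(C^\bullet_c(X,\mathbb{R})^G) \cong H^\bullet_{dR}(X)^G$, the continuous cochain class representing $\omega$ corresponds to the invariant volume form $\omega$, and $s^*_c(\omega) \in H^n(C^\bullet_c(X,\mathbb{R})^\Gamma) \cong H^n(M,\mathbb{R})$ corresponds to the de Rham class of the $\Gamma$--invariant form $s^*\omega$ obtained by pulling back $\omega$ along the $\rho$--equivariant map $s$. By de Rham duality $\langle s^*_c(\omega),[M]\rangle = \int_M s^*\omega$. Since the section $s \co M \to E_\rho$ corresponds to the $\rho$--equivariant map $s \co X \to X$ and Goldman's invariant is $\upsilon(\rho) = \int_M s^*\omega$, this yields $\mathrm{Vol}(\rho) = |\upsilon(\rho)| = \left| \int_M s^*\omega \right|$. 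I expect the main obstacle to be exactly this last identification: carefully tracking the explicit $\phi_o$/Van Est isomorphism through the pullback $s^*$ to confirm that the simplicial pairing on $[M]$ agrees on the nose (up to no spurious normalization) with integration of the smooth invariant form, and verifying compatibility with the passage between the $\rho$--equivariant map and the flat-bundle section used to define $\upsilon(\rho)$.
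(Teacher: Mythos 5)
Your proposal is correct and follows the paper's own proof essentially step for step: the collapse of $[M]^{\ell^1}_\mathrm{Lip}$ to the single class $i_*[M]$ for closed $M$, the use of the commutative square $c \circ s^*_b = s^*_c \circ c$ to make the pairing independent of the choice of $\omega_b \in c^{-1}(\omega)$, and the de Rham identification $\langle s^*_c\omega,[M]\rangle = \int_M s^*\omega$. The only difference is one of explicitness: the verification you flag as the ``main obstacle'' is carried out in the paper by comparing the two $\Gamma$--invariant cocycles $f(x_0,\ldots,x_n)=\int_{[x_0,\ldots,x_n]}\omega$ pulled back by $s$ and $h(x_0,\ldots,x_n)=\int_{[x_0,\ldots,x_n]}s^*\omega$, which represent the same class because the geodesic straightening map is chain homotopic to the identity.
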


\begin{proof}
A section $s \co M \rightarrow E_\rho$ corresponds to a $\rho$--equivariant map $X\rightarrow X$, denoted by $s \co X \rightarrow X$. Since $M=\Gamma\backslash X$ is a closed manifold, the set $[M]^{\ell^1}_\mathrm{Lip}$ contains exactly one element, namely, the class $i_*[M]$, where
$[M]$ is the fundamental class of $M$, and
$i_* \co H_n(M,\mathbb{R}) \rightarrow H^{\ell^1}_n(M,\mathbb{R})$ is the map induced by the inclusion
$C_\bullet(M,\mathbb{R}) \subset C_\bullet^{\ell^1}(M,\mathbb{R})$.
Hence, the volume invariant $\mathrm{Vol}(\rho)$ is computed by
{\setlength\arraycolsep{2pt}
\begin{eqnarray*}
\mathrm{Vol}(\rho) &=& \inf \{ |\langle s^*_b(\omega_b),\alpha \rangle | \ | \ c(\omega_b)=\omega \text{ and } \alpha \in [M]^{\ell^1}_\mathrm{Lip}  \} \\
&=& \inf \{ |\langle s^*_b(\omega_b),i_*[M] \rangle | \ | \ c(\omega_b)=\omega \}.
\end{eqnarray*}}

Considering the following commutative diagram,
$$ \xymatrixcolsep{4pc}\xymatrix{
H^n_{c,b}(G,\mathbb{R}) \ar[r]^-{c} \ar[d]_-{s^*_b} &
H^n_c(G,\mathbb{R}) \ar[d]^-{s^*_c} \\
H^n_b(\Gamma,\mathbb{R}) \ar[r]^-{c} &
H^n(\Gamma,\mathbb{R})
}$$
we have $c(s^*_b(\omega_b))=s^*_c(c(\omega_b))=s^*_c\omega$. Note that $s^*_c\omega$ is represented by a $\Gamma$--invariant cocycle $s^*f$ where $f \co X^{n+1} \rightarrow \mathbb{R}$ is the $G$--invariant cocycle representing $\omega$,  which is defined by
$$ f(x_0,\ldots,x_n) = \int_{[x_0,\ldots,x_n]} \omega.$$

Also, one can consider another $\Gamma$--invariant cocycle $h \co X^{n+1} \rightarrow \mathbb{R}$ defined by
$$h(x_0,\ldots,x_n) = \int_{[x_0,\ldots,x_n]} s^*\omega.$$
Here, $s^*\omega$ is the pull-back of the $G$--invariant volume form $\omega$ by $s \co X \rightarrow X$.
It is easy to see that $h$ also represents the continuous cohomology class $s^*_c \omega$ because the geodesic straightening map is chain homotopic to the identity.

Let $c$ be a fundamental cycle representing $[M]$.
Since $h$ represents the cohomology class $s^*_c\omega$ in $H^n(\Gamma,\mathbb{R}) \cong H^n(M,\mathbb{R})$, we have
$$ \langle s^*_b(\omega_b), i_*[M] \rangle =\langle s^*_c \omega, [M] \rangle = \langle h, c \rangle=\int_M s^*\omega$$
for any $\omega_b \in c^{-1}(\omega)$. The last equation follows from the de Rham theorem. This completes the proof.
\end{proof}

Goldman proves that $\upsilon (\rho)$ exactly characterizes discrete, faithful representations of $\Gamma$ into $G$ for the case that $G$ is either a connected semisimple Lie group of higher rank or $\text{SO}(n,1)$. This implies that $\mathrm{Vol}(\rho)$ does so by Lemma \ref{lem:3.3}.

\section{Semisimple Lie groups of higher rank}\label{sec:4}

In this section, we prove Theorem \ref{thm:1.2} for the case that $G$ is a semisimple Lie group of higher rank.
Recall the restriction maps $$res_c \co H^\bullet_c(G,\mathbb{R}) \rightarrow H^\bullet(\Gamma,\mathbb{R}) \text{ and } res_b \co H^\bullet_{c,b}(G,\mathbb{R}) \rightarrow H^\bullet_b(\Gamma,\mathbb{R}),$$ induced from the inclusions $C^\bullet_c(X,\mathbb{R})^G \subset C^\bullet_c(X,\mathbb{R})^\Gamma$ and $C^\bullet_{c,b}(X,\mathbb{R})^G \subset C^\bullet_{c,b}(X,\mathbb{R})^\Gamma$ respectively.
Note that $res_b$ is an isometric embedding because $\Gamma$ is a lattice in $G$.
We first observe that $$\langle res_b(\omega_b), \alpha \rangle = \mathrm{Vol}(M)$$ for all $\omega_b \in c^{-1}(\omega)$ and all $\alpha \in [M]^{\ell^1}_\mathrm{Lip}$. To verify this, we need to prove the existence of the geodesic straightening map on the locally finite chain complex with finite Lipschitz constant.

The geodesic straightening map on the singular chain complex of a nonpositively curved manifold is introduced by Thurston \cite[Section 6.1]{Th78}.
Let $X$ be a simply connected, complete Riemannian manifold with nonpositive sectional curvature.
A geodesic simplex is defined inductively as follows: Let $x_0,\ldots,x_k \in X$. First, the geodesic $0$--simplex $[x_0]$ is the point $x_0 \in X$ and the geodesic $1$--simplex $[x_0,x_1]$ is the unique geodesic from $x_1$ to $x_0$. In general, the geodesic $k$--simplex $[x_0,\ldots,x_k]$ is the geodesic cone over $[x_0,\ldots,x_{k-1}]$ with the top point $x_k$.

Let $M$ be a connected, complete Riemannian manifold with nonpositive sectional curvature. Then, \emph{the geodesic straightening map} $str \co C_\bullet (M,\mathbb{R}) \rightarrow C_\bullet (M,\mathbb{R})$ is defined by
$$ str(\sigma) = \pi_M \circ [\tilde{\sigma}(e_0),\ldots,\tilde{\sigma}(e_k)],$$
for a singular $k$--simplex $\sigma \co \Delta^k \rightarrow M$ where $\pi_M \co \widetilde{M} \rightarrow M$ is the universal covering map, $e_0,\ldots,e_k$ are the vertices of the standard $k$--simplex $\Delta^k$, and $\tilde{\sigma}$ is a lift of $\sigma$ to the universal cover $\widetilde{M}$.

\begin{prop}\label{pro:4.1}
Let $M$ be a connected, complete, locally symmetric space of noncompact type. Then, geodesic straightening map is well-defined on $C^\mathrm{lf,Lip}_\bullet(M,\mathbb{R})$ and moreover, it is chain homotopic to the identity.
\end{prop}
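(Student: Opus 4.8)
The plan is to verify the two assertions separately: first that geodesic straightening carries $C^\mathrm{lf,Lip}_\bullet(M,\mathbb{R})$ into itself, and then that the standard prism homotopy between $str$ and the identity also respects this subcomplex. Throughout I use that $M$, being locally symmetric of noncompact type, is nonpositively curved with bounded sectional curvature; in particular its universal cover is a Hadamard (CAT(0)) manifold with uniquely defined geodesic simplices, and $str$ is already a well-defined chain map on singular chains by Thurston's construction. The key point is that straightening is defined simplex by simplex, so extending it to a locally finite sum $c=\sum_{\sigma\in A}a_\sigma\sigma$ is purely formal once I check that (i) local finiteness is preserved and (ii) the supremum of the Lipschitz constants of the straightened simplices stays finite.

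For (i) I first record a diameter bound. If $\mathrm{Lip}(c)=L<\infty$, then every simplex $\sigma$ occurring in $c$ has $\mathrm{diam}(\mathrm{im}\,\sigma)\le L\cdot\mathrm{diam}(\Delta^k)=:D$, uniformly over all simplices of $c$. In a CAT(0) space the geodesic simplex $[\sigma(e_0),\dots,\sigma(e_k)]$ lies in the closed convex hull of its vertices, and since closed metric balls are convex this hull has diameter at most $\max_{i,j}d(\sigma(e_i),\sigma(e_j))\le D$; as the vertices of $str(\sigma)$ are the points $\sigma(e_i)\in\mathrm{im}\,\sigma$, we get $\mathrm{im}(str(\sigma))\subseteq\overline{B}(\sigma(e_0),D)$. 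Local finiteness now follows: for compact $K$, if $str(\sigma)$ meets $K$ then $\mathrm{im}\,\sigma$ meets the closed neighborhood $\overline{N_D(K)}$, which is compact by completeness (Hopf--Rinow), so local finiteness of $A$ leaves only finitely many such $\sigma$. The same neighborhood argument shows that $\partial\circ str=str\circ\partial$ passes to locally finite chains, since the faces of a geodesic simplex are the geodesic simplices on the sub-tuples of its vertices.

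For (ii) I must show $\mathrm{Lip}(str(\sigma))\le C$ with $C$ depending only on $k$ and $D$, not on $\sigma$; I regard this as the main obstacle, as everything else is formal or a consequence of CAT(0) convexity. Writing the geodesic simplex as an iterated cone with tip $\sigma(e_k)$ over the straightened face, its derivative in the barycentric coordinates splits into a motion-along-the-cone-geodesic term, bounded by the geodesic length $\le D$, and a motion-of-the-base-point term, which is a Jacobi field $J$ along a cone geodesic with $J=0$ at the tip. In nonpositive curvature $|J|$ is convex, so $|J(u)|\le(1-u)|J(0)|$, and this factor $(1-u)$ essentially cancels the $1/(1-u)$ arising from the cone reparametrization near the tip; an induction on dimension then gives a recursion $C_k\le D+\lambda C_{k-1}$ and hence a uniform bound $\mathrm{Lip}(str(\sigma))\le C(k,D)$. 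Therefore $str(c)\in C^\mathrm{lf,Lip}_\bullet(M,\mathbb{R})$.

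Finally, for the homotopy I use the prism operator attached to the geodesic homotopy $P_\sigma\co\Delta^k\times[0,1]\to M$, $P_\sigma(x,s)=\gamma_{\sigma(x),\,str(\sigma)(x)}(s)$, setting $T(\sigma)=\sum_j(-1)^jP_\sigma\circ\tau_j$ over the standard triangulation $\tau_j$ of the prism; the relation $\partial T+T\partial=\mathrm{id}-str$ is inherited simplexwise from the singular case. It remains to see $T$ preserves $C^\mathrm{lf,Lip}_\bullet(M,\mathbb{R})$. Since $\sigma(x)$ and $str(\sigma)(x)$ both lie in $\overline{B}(\sigma(e_0),D)$, the homotopy geodesics have length $\le 2D$, so $\mathrm{im}(P_\sigma)\subseteq\overline{B}(\sigma(e_0),2D)$ and local finiteness is preserved exactly as before. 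For the Lipschitz bound, convexity of the distance function in CAT(0) gives $d(P_\sigma(x,s),P_\sigma(x',s))\le(1-s)\,d(\sigma(x),\sigma(x'))+s\,d(str(\sigma)(x),str(\sigma)(x'))\le\max(\mathrm{Lip}(\sigma),\mathrm{Lip}(str(\sigma)))\,|x-x'|$, while the $s$--direction moves at speed $\le 2D$; combined with the uniformly bounded Lipschitz constants of the affine maps $\tau_j$, this bounds $\mathrm{Lip}(T(\sigma))$ uniformly. Hence $T$ is a chain homotopy on $C^\mathrm{lf,Lip}_\bullet(M,\mathbb{R})$ between $str$ and the identity, completing the plan.
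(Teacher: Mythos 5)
Your proposal is correct and follows the same two-step skeleton as the paper's proof: first show that straightening preserves both local finiteness (via the uniform diameter bound and geodesic convexity of closed balls, then the compact-neighborhood argument with $\overline{\mathcal{N}_D(K)}$) and finiteness of the Lipschitz constant; then run the straight-line prism homotopy through the same two checks and inherit $\partial H + H\partial = str - \mathrm{id}$ simplexwise. The genuine difference lies in how the two analytic estimates are obtained. You prove them directly: the uniform Lipschitz bound for geodesic simplices of bounded diameter via convexity of Jacobi field norms along the cone geodesics (the $(1-u)$ decay cancelling the $1/(1-u)$ barycentric distortion near the tip, plus induction on dimension), and the Lipschitz bound for the prism via CAT(0) convexity of the distance function between geodesics together with the speed bound $2D$ in the $s$--direction. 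The paper instead quotes both from L\"oh--Sauer: \cite[Proposition 2.4]{LS09} is exactly your bound $\|T_x\tau\| < L$ for geodesic simplices of diameter less than $D$, and \cite[Proposition 2.1]{LS09} is the statement that the canonical straight-line homotopy has Lipschitz constant controlled by $\mathrm{Lip}(\sigma)$ and $\mathrm{Lip}(str(\sigma))$. Your route is more self-contained (your Jacobi-field argument is essentially the proof of the cited proposition), at the cost of leaving the cone-reparametrization cancellation and the recursion $C_k \le D + \lambda C_{k-1}$ at sketch level, whereas the citation route imports these bounds with full rigor; either way the reduction of the proposition to those two estimates is identical.
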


\begin{proof}
Let $A \in S^\mathrm{lf,Lip}_k(M)$ for a nonnegative integer $k$. This means that any compact subset of $M$ intersects the image of only finitely many elements of $A$ and there exists a constant $C_A>0$ such that $\mathrm{Lip}(\sigma)<C_A$ for all $\sigma\in A$.

Let $str \co C_\bullet(M,\mathbb{R}) \rightarrow C_\bullet(M,\mathbb{R})$ denote the geodesic straightening map. Define $str(A)$ by
$$str(A)= \{ str(\sigma) \text{ }|\text{ } \sigma \in A \}.$$
To show that geodesic straightening map is well defined on $C^\mathrm{lf,Lip}_\bullet(M,\mathbb{R})$, it is sufficient to show that $str(A) \in S^\mathrm{lf,Lip}_k(M)$.

We first claim that $str(A)$ has finite Lipschitz constant.
Let $\mathrm{Diam}(\sigma)$ denote the diameter of $\sigma(\Delta^k)$ for a singular simplex $\sigma \co \Delta^k \rightarrow M$. For all $\sigma \in A$,
$$\mathrm{Diam}(\sigma) \leq C_A \cdot \mathrm{Diam}(\Delta^k)$$
since $\sigma \co \Delta^k \rightarrow M$ has Lipschitz constant $C_A$.
Hence, each $\sigma \in A$ is contained in a closed ball of diameter $D_A= C_A \cdot \mathrm{Diam}(\Delta^k)$ in $M$. Because every closed ball in $X$ is geodesically convex,
both $\sigma$ and $str(\sigma)$ are contained in the same closed ball of diameter $D_A$ for every $\sigma \in A$. This implies that $\mathrm{Diam}(str(\sigma))<D_A$ for all $\sigma \in A$.

For every $D>0$ and $k\in \mathbb{N}$, there is $L>0$ such that every geodesic $k$--simplex $\tau$ of diameter less than $D$ satisfies $\| T_x\tau \|<L$ for every $x \in \Delta^k$ \cite[Proposition 2.4]{LS09}. Hence,
there exists $L_A>0$ such that $\mathrm{Lip}(str(\sigma)) < L_A $ for all $\sigma \in A$, that is, $str(A)$ has finite Lipschitz constant $L_A$.

Next, to verify that $str(A)$ has locally finite support, we need to show that every compact subset of $M$ intersects the image of only finitely many elements of $str(A)$. Let $K$ be a compact subset of $M$ and $\mathcal{N}_{D_A}(K)$ be the $D_A$--neighborhood of $K$.
Suppose $\overline{\mathcal{N}_{D_A}(K)}\cap \sigma= \emptyset$ for some $\sigma \in A$.
As observed above, both $\sigma$ and $str(\sigma)$ are contained in a closed ball $B_\sigma$ of diameter $D_A$.
It is obvious that $B_\sigma \cap (M-\overline{\mathcal{N}_{D_A}(K)}) \neq \emptyset$ because of $\sigma \subset B_\sigma$.
Then, $B_\sigma$ can never touch $K$, which implies $str(\sigma) \cap K = \emptyset$.
Thus, $K$ can intersect the image of $str(\sigma)$ only for $\sigma \in A$ with $\overline{\mathcal{N}_{D_A}(K)}\cap \sigma \neq \emptyset$.
There exist finitely many such elements of $A$ since $\overline{\mathcal{N}_{D_A}(K)}$ is the compact subset of $M$, and $A$ has locally finite support. Finally, we can conclude that $str(A)$ is a locally finite subset of $S_k(M)$ with finite Lipschitz constant, that is, $str(A) \in S^\mathrm{lf,Lip}_k(M)$.

From the above observation, we have a well-defined map $$str^\mathrm{lf} \co C^\mathrm{lf,Lip}_\bullet(M,\mathbb{R})\rightarrow C^\mathrm{lf,Lip}_\bullet(M,\mathbb{R})$$ extending the geodesic straightening map $str \co C_\bullet (M,\mathbb{R}) \rightarrow C_\bullet (M,\mathbb{R})$.
It is obvious that $str^\mathrm{lf}$ is a chain map.

Now, to construct a chain homotopy from $str^\mathrm{lf}$ to the identity, recall
the chain homotopy $H_\bullet \co C_\bullet(M,\mathbb{R}) \rightarrow C_{\bullet+1}(M,\mathbb{R})$ from the geodesic straightening map $str$ to the identity.
Let $G_\sigma \co \Delta^k \times [0,1] \rightarrow M$ be the canonical straight line homotopy from $\sigma$ to $str(\sigma)$ for a singular $k$--simplex $\sigma$ in $M$. Let $\{e_0,\ldots,e_k \}$ denote the set of vertices in $\Delta^k$ for each $k$. The chain homotopy $H_k \co C_k(M,\mathbb{R})\rightarrow C_{k+1}(M,\mathbb{R})$ is defined by
$$H_k(\sigma) = \sum_{i=0}^k (-1)^i G_\sigma \circ \eta_i,$$
where $\eta_i \co \Delta^{k+1} \rightarrow \Delta^k \times [0,1]$ is the affine map that maps $e_0,\ldots,e_{k+1}$ to $(e_0,0),\ldots,(e_i,0),(e_i,1),\ldots,(e_k,1)$ for $i=0,\ldots,k$.

Let $c=\sum_{\sigma \in A} a_\sigma \sigma$ be a $k$--chain in $C_k^\mathrm{lf,Lip}(M,\mathbb{R})$ for $A\in S_k^\mathrm{lf,Lip}(M)$. Then, as we observed previously, $\mathrm{Lip}(\sigma) < C_A$ and $\mathrm{Lip}(str(\sigma)) < L_A$ for all $\sigma \in A$.
Moreover, the canonical line homotopy $G_\sigma$ from $\sigma$ to $str(\sigma)$ has finite Lipschitz constant that depends only on $C_A$,  $L_A$ by \cite[Proposition 2.1]{LS09}. Noting that the Lipschitz constant of $\eta_i$ is also uniformly bounded from above for all $i=0,\ldots,k$, it follows that the Lipschitz constant of $H_k(\sigma)$ is uniformly bounded from above by a constant depending only on $C_A$, $L_A$ for all $\sigma \in A$. This means that the Lipschitz constant $\mathrm{Lip}(H_k(c))$ of $H_k(c)$ is finite.

To see that $H_k(c)$ has locally finite support, note that if $\sigma$ is contained in a closed ball, then the images of both $str(\sigma)$ and $H_k(\sigma)$ are contained in the same closed ball because every closed ball in $X$ is geodesically convex. As in the proof that $str(A)$ has locally finite support, any compact subset $K$ of $M$ can intersect the image of singular $(k+1)$--simplices occurring in $H_k(\sigma)$ only for $\sigma \in A$ with $\overline{\mathcal{N}_{D_A}(K)}\cap \sigma \neq \emptyset$. The set of such elements of $A$ are finite due to $A\in S^\mathrm{lf,Lip}_k(M)$. Moreover, since $H_k(\sigma)$ is a finite sum of $(k+1)$--simplices, $K$ intersects the image of finitely many $(k+1)$--simplices occurring in $H_k(c)$. This implies that $H_k(c)$ has locally finite support.
Now, we have a well-defined map,
$$H_\bullet^\mathrm{lf} \co C^\mathrm{lf,Lip}_\bullet(M,\mathbb{R}) \rightarrow C^\mathrm{lf,Lip}_{\bullet+1}(M,\mathbb{R}).$$

Since $H_\bullet^\mathrm{lf} \co C^\mathrm{lf,Lip}_\bullet(M,\mathbb{R}) \rightarrow C^\mathrm{lf,Lip}_{\bullet+1}(M,\mathbb{R})$ is the map extending the chain homotopy $H_\bullet \co C_\bullet (M,\mathbb{R}) \rightarrow C_{\bullet +1}(M,\mathbb{R})$ between the geodesic straightening map $str$ and the identity, it clearly satisfies $$\partial \circ H_k^\mathrm{lf} + H_{k-1}^\mathrm{lf} \circ \partial =str^\mathrm{lf} -id.$$
Hence, $H^\mathrm{lf}_\bullet$ is a chain homotopy from $str^\mathrm{lf}$ to the identity.
Therefore, we can conclude that $str^\mathrm{lf} \co C^\mathrm{lf,Lip}_\bullet(M,\mathbb{R}) \rightarrow C^\mathrm{lf,Lip}_\bullet(M,\mathbb{R})$ is a chain homotopic to the identity.
\end{proof}

The existence of the geodesic straightening map on $C^\mathrm{lf,Lip}_\bullet(M,\mathbb{R})$ allows us to get a straight cycle from an arbitrary cycle without changing its homology class. By using the straightening map $str^\mathrm{lf} \co C^\mathrm{lf,Lip}_\bullet(M,\mathbb{R}) \rightarrow  C^\mathrm{lf,Lip}_\bullet(M,\mathbb{R})$, we can prove the following Lemma.

\begin{lemma}\label{lem:4.2}
Let $G$ be a connected semisimple Lie group with trivial center and no compact factors. Let $\Gamma$ be a lattice in $G$. Then,
$$\langle res_b(\omega_b) , \alpha \rangle = \mathrm{Vol}(M)$$
for all $\omega_b \in c^{-1}(\omega)$ and all $\alpha \in [M]^{\ell^1}_\mathrm{Lip}$.
\end{lemma}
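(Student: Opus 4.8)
The plan is to evaluate the pairing $\langle res_b(\omega_b),\alpha\rangle$ by computing it on the cochain level with a concretely chosen representative of $res_b(\omega_b)$ and a conveniently straightened representative of $\alpha$. The key point is that $res_b(\omega_b)$, although a bounded cohomology class, can be realized by the same $G$--invariant volume cocycle that represents $\omega$ in continuous (unbounded) cohomology. Indeed, under the identifications $H^\bullet_{c,b}(G,\mathbb{R})\cong H^\bullet(C^\bullet_{c,b}(X,\mathbb{R})^G)$ and $H^\bullet_c(G,\mathbb{R})\cong H^\bullet(C^\bullet_c(X,\mathbb{R})^G)$, the comparison map $c$ is just induced by the inclusion $i$, so any $\omega_b\in c^{-1}(\omega)$ is represented by a $G$--invariant \emph{bounded} cocycle whose image in $C^\bullet_c(X,\mathbb{R})^G$ is cohomologous to the volume cocycle $f(x_0,\ldots,x_n)=\int_{[x_0,\ldots,x_n]}\omega$. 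Crucially, $res_b$ and $res_c$ are both induced by the same inclusion $C^\bullet(X,\mathbb{R})^G\subset C^\bullet(X,\mathbb{R})^\Gamma$, so $res_b(\omega_b)$ and $res_c(\omega)$ agree as \emph{ordinary} cohomology classes in $H^n(\Gamma,\mathbb{R})\cong H^n(M,\mathbb{R})$; this is what lets us replace the bounded pairing with a de Rham integral.

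First I would fix $\omega_b\in c^{-1}(\omega)$ and represent $res_b(\omega_b)$ by the $\Gamma$--invariant cocycle $f$ above (viewed on $M$), which is precisely the volume cocycle: $f$ assigns to a straight simplex the signed volume $\int\omega$ of the geodesic simplex it spans. Next, given any $\alpha\in[M]^{\ell^1}_\mathrm{Lip}$, I would choose a locally finite fundamental cycle $z=\sum_\sigma a_\sigma\sigma$ of finite Lipschitz constant representing $\alpha$, and apply the straightening map $str^\mathrm{lf}$ of Proposition \ref{pro:4.1}. By that proposition $str^\mathrm{lf}(z)$ lies in $C^\mathrm{lf,Lip}_n(M,\mathbb{R})$ and represents the same class $\alpha$, since $str^\mathrm{lf}$ is chain homotopic to the identity through a homotopy preserving the locally finite Lipschitz subcomplex. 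The pairing is then
\begin{equation*}
\langle res_b(\omega_b),\alpha\rangle=\langle f, str^\mathrm{lf}(z)\rangle=\sum_\sigma a_\sigma\int_{str(\sigma)}\omega,
\end{equation*}
because the cocycle $f$ evaluated on a straight simplex is exactly the integral of $\omega$ over it. Since $str(\sigma)$ are geodesic simplices and $z$ is a fundamental cycle, this sum is precisely the integral of the volume form $\omega$ over the straightened fundamental cycle, which evaluates to $\int_M\omega=\mathrm{Vol}(M)$ by the de Rham theorem applied to the locally finite setting (the finite Lipschitz constant guarantees that the straightened cycle pushes forward to an honest, locally finite geometric representative of the fundamental class, so that the period of $\omega$ is the total volume).

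The main obstacle is the passage from the combinatorial Kronecker pairing on the $\ell^1$/locally-finite complexes to the geometric integral $\int_M\omega=\mathrm{Vol}(M)$, and in particular justifying that this integral is insensitive to the choice of fundamental cycle and equals the full volume rather than some cancellation of signed contributions. This is exactly where the finite Lipschitz hypothesis is indispensable: it is what makes $str^\mathrm{lf}$ well-defined (Proposition \ref{pro:4.1}) and, via the isomorphism $H^\mathrm{lf,Lip}_\bullet(M,\mathbb{R})\cong H^\mathrm{lf}_\bullet(M,\mathbb{R})$, guarantees that $str^\mathrm{lf}(z)$ is a genuine locally finite fundamental cycle on which integration of the closed top form $\omega$ recovers $\mathrm{Vol}(M)$. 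I expect the remaining work to consist of verifying that the de Rham integral of a $G$--invariant form over a straightened locally finite fundamental cycle is independent of the chosen representative (again using the chain homotopy of Proposition \ref{pro:4.1} together with Stokes' theorem, valid here since all chains have finite Lipschitz constant) and that the orientation conventions make the total contribution add up to $+\mathrm{Vol}(M)$ for every $\omega_b\in c^{-1}(\omega)$ and every $\alpha\in[M]^{\ell^1}_\mathrm{Lip}$.
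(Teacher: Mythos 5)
Your proposal has a genuine gap at its very first step: you claim that an arbitrary $\omega_b \in c^{-1}(\omega)$ can be represented (after restriction to $\Gamma$) by the volume cocycle $f$, justified by the observation that $res_b(\omega_b)$ and $res_c(\omega)$ agree as \emph{ordinary} cohomology classes. That observation is true but does not suffice, because the Kronecker pairing $H^n_b(M,\mathbb{R})\otimes H^{\ell^1}_n(M,\mathbb{R})\to\mathbb{R}$ does not factor through ordinary cohomology: in the interesting case $M$ is noncompact, the classes $\alpha\in[M]^{\ell^1}_{\mathrm{Lip}}$ do not come from ordinary singular homology (indeed $H_n(M,\mathbb{R})=0$ for a connected open $n$--manifold), so two bounded classes with the same image under the comparison map can pair differently with $\alpha$. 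Concretely, on the cochain level a general $\omega_b$ is represented by $f_b = f + \delta b$, where $b \in C^{n-1}_c(X,\mathbb{R})^G$ is continuous and $G$--invariant but in general \emph{unbounded}; the class $[\delta b]$ is a possibly nonzero element of $\ker(c)\subset H^n_{c,b}(G,\mathbb{R})$, and its pairing with $\alpha$ is exactly what your argument never controls. If the pairing depended only on the ordinary class, the lemma would be immediate and the infimum over $c^{-1}(\omega)$ in the definition of $\mathrm{Vol}(\rho)$ would be pointless.

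The paper's proof is devoted precisely to this point. It writes $f_b = f + \delta b$ as above, obtains $\langle f, str^{\mathrm{lf}}(z)\rangle = \mathrm{Vol}(M)$ from \cite[Proposition 4.4]{LS09} (this matches your second step, which is fine in spirit), and then proves $\langle \delta b, str^{\mathrm{lf}}(z)\rangle = 0$ as follows: the finite Lipschitz constant gives a uniform diameter bound on the straightened simplices; transitivity of $G$ on $X$ together with the $G$--invariance of $b$ allows one to translate every face $str(\sigma_i^j)$ into one fixed compact ball, so the values $\langle b, str(\sigma_i^j)\rangle$ are uniformly bounded; combined with $\sum_i|a_i|<\infty$ this makes the relevant double series absolutely convergent, and a rearrangement exploiting the cycle condition $\partial\, str^{\mathrm{lf}}(z)=0$ shows the sum vanishes. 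Your proposal would be repaired by inserting this argument (or an equivalent one) wherever you pass from $f_b$ to $f$; note that Stokes-type or chain-homotopy arguments alone cannot do it, because $b$ is not bounded and the fundamental cycle is an infinite sum, so pairing a coboundary with it is not automatically zero.
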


\begin{proof}
On the continuous cochain complex $C^\bullet_c(X,\mathbb{R})$, the $G$--invariant volume form $\omega$ is represented by a cocycle $f \co X^{n+1} \rightarrow \mathbb{R}$ defined by
$$f(x_0,\ldots,x_n)=\int_{[x_0,\ldots,x_n]} \omega.$$
Let $f_b \co X^{n+1} \rightarrow \mathbb{R}$ be a cocycle representing $\omega_b \in c^{-1}(\omega)$. Both $f$ and $f_b$ represent the same cohomology class $\omega$ in the continuous cohomology of $G$. Hence, there exists a $G$--invariant continuous cochain $b$ in $C^{n-1}_c(X,\mathbb{R})^G$ such that $$f_b =f +\delta b.$$

Let $c=\sum_{i=1}^\infty a_i \sigma_i$ be a locally finite fundamental $\ell^1$--cycle with finite Lipschitz constant representing $\alpha$.
Due to Proposition \ref{pro:4.1}, $str^\mathrm{lf}(c)$ is also a locally finite fundamental $\ell^1$--cycle with finite Lipschitz constant and represents $\alpha$. By \cite[Proposition 4.4]{LS09}, we have
$$ \langle f, str^\mathrm{lf}(c) \rangle = \mathrm{Vol}(M).$$

Now, we claim that $\langle \delta b, str^\mathrm{lf}(c) \rangle =0$. Let $\sigma_i^j$ denote the $j$-th face of $\sigma$ for $j=0,\ldots,n$. Then, $\partial \sigma_i = \sum_{j=0}^n (-1)^j \cdot \sigma_i^j$ and
\begin{eqnarray}\label{eqn:4.1}
\langle \delta b, str^\mathrm{lf}(c) \rangle = \sum_{i=1}^\infty \sum_{j=0}^n (-1)^j  a_i \cdot \langle b, str(\sigma_i^j) \rangle.
\end{eqnarray}

Since the Lipschitz constant of $str^\mathrm{lf}(c)$ is finite, there exists $R>0$ such that
each $\sigma_i$ is contained in a closed ball with radius $R$ for all $i \in \mathbb{N}$. Fix a closed ball $B$ with radius $R$ in $X$. Then, there exists $g_i \in G$ for each $\sigma_i$ such that $g_i\cdot str(\sigma_i) \subset B$ since $G$ acts transitively on $X$. Due to the $G$--invariance of $b$, we have $\langle b, str(\sigma_i^j) \rangle = \langle b, g_i \cdot str(\sigma_i^j) \rangle$ for all $i\in \mathbb{N}$ and $j=0,\ldots,n$. This implies that $$\langle b, str(\sigma_i^j) \rangle = b(x_0, \ldots,x_{n-1})$$ for some $(x_0,\ldots,x_{n-1})\in B^n$.
Since $b$ is continuous and $B$ is the compact subset of $X$, there exists a upper bound $C>0$ on $\langle b, str(\sigma_i^j) \rangle $ for all $i\in \mathbb{N}$ and $j=0,\ldots,n$. Furthermore, $c$ is a $\ell^1$--cycle and hence,
$$\sum_{i=1}^\infty \sum_{j=0}^n | (-1)^j  a_i \cdot \langle b, str(\sigma_i^j) \rangle | < n C \cdot \sum_{i=1}^\infty |a_i| < \infty.$$

In other words, the series in Equation (\ref{eqn:4.1}) absolutely converges. Thus, all rearrangements of the series in Equation (\ref{eqn:4.1}) converge to the same value. From the cycle condition of $str^\mathrm{lf}(c)$,
there exists a permutation $\tau$ of $\mathbb{N}\times \{0,\ldots,n\}$ such that
$$ \sum_{i=1}^\infty \sum_{j=0}^n (-1)^{\tau(j)}  a_{\tau(i)} \cdot str(\sigma_{\tau(i)}^{\tau(j)})=0.$$
Under this permutation $\tau$, we can conclude that $\langle \delta b, str^\mathrm{lf}(c) \rangle = 0$.
Finally, we have
{\setlength\arraycolsep{2pt}
\begin{eqnarray*}
\langle res_b(\omega_b), \alpha \rangle &=& \langle f+\delta b, str^\mathrm{lf}(c) \rangle \\ &=&
\langle f, str^\mathrm{lf}(c) \rangle + \langle \delta b, str^\mathrm{lf}(c) \rangle \\
&=& \mathrm{Vol}(M)
\end{eqnarray*}}
The second equation is available since all series in the equation absolutely converge.
\end{proof}

\begin{defi}
A representation $\rho \co \Gamma \rightarrow G$ is \emph{maximal} if
$$\mathrm{Vol}(\rho)=\mathrm{Vol}(M).$$
\end{defi}
For reader's convenience, we recall Margulis's normal subgroup theorem \cite{Margulis}.
\begin{thm}Let $G$ be a connected semisimple Lie group with finite center with $\br$-rank $\geq 2$, and let $\Gamma\subset G$ be an irreducible lattice. If $N\subset \Gamma$ is a normal subgroup of $\Gamma$, then either $N$ lies in the center of $G$ or the quotient $\Gamma/N$ is finite.
\end{thm}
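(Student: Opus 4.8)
The plan is to prove the contrapositive form of the dichotomy: assuming that $N$ is \emph{not} contained in the center $Z(G)$, I will show that the quotient $\Gamma/N$ is finite. The argument rests on two pillars, each of which uses the hypothesis $\br$-rank $\geq 2$ in an essential way. The first is that $\Gamma/N$ is \emph{amenable}; the second is that $\Gamma/N$ inherits Kazhdan's property (T) from $\Gamma$. Since a discrete group that is simultaneously amenable and Kazhdan must be finite, combining the two pillars forces $\Gamma/N$ to be finite, which is the desired conclusion.

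For the amenability pillar I would bring in the boundary machinery. Let $P\subset G$ be a minimal parabolic subgroup and let $B=G/P$ be the Furstenberg boundary with its $G$-quasi-invariant measure class. Because $P$ is amenable, the $G$-action on $B$ is amenable, and hence so is the restricted $\Gamma$-action. The key object is the maximal measurable $N$-invariant factor $B\to Y$: since $N$ is normal in $\Gamma$, the space $Y$ carries a residual $\Gamma/N$-action, and $\Gamma$ acts on $Y$ precisely through this quotient. By Margulis's measurable factor theorem, every $\Gamma$-factor of $G/P$ has the form $G/Q$ for a parabolic $Q\supseteq P$, so $Y\cong G/Q$. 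I now run the case analysis on $Q$. If $Q=P$, then $N$ acts trivially (mod null sets) on all of $B$; as the kernel of the $\Gamma$-action on $G/P$ is $\bigcap_g gPg^{-1}=Z(G)$, this would force $N\subset Z(G)$, against our standing assumption. The intermediate cases $P\subsetneq Q\subsetneq G$ are exactly where irreducibility and $\br$-rank $\geq 2$ do the real work: a proper parabolic factor produces a nontrivial proper $\Gamma$-invariant block in the root structure, which is incompatible with irreducibility of the lattice in the higher-rank setting and so cannot occur for a noncentral $N$. This leaves $Q=G$, i.e. $Y$ is trivial, which says precisely that $N$ acts \emph{ergodically} on $(B,\mu)$. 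Amenability of the $\Gamma$-action then descends: an amenable $\Gamma$-action whose restriction to the normal subgroup $N$ is ergodic forces $\Gamma/N$ to be an amenable group.

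For the second pillar I would invoke Kazhdan's theorem: a connected semisimple Lie group of $\br$-rank $\geq 2$ has property (T), property (T) passes to lattices, and it passes to quotients, so $\Gamma/N$ has property (T). A countable amenable group with property (T) has its trivial representation weakly contained in the regular representation; property (T) upgrades this weak containment to an actual subrepresentation, yielding a nonzero invariant vector in $\ell^2(\Gamma/N)$, which is possible only when the group is finite. Hence $\Gamma/N$ is finite. In the residual situation where $G$ has rank-one simple factors and the ambient group itself fails property (T), the same finiteness is reached by applying the boundary argument factor-by-factor and recombining via irreducibility, at the cost of additional bookkeeping.

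The main obstacle is unquestionably the amenability pillar, and within it the factor theorem together with the elimination of intermediate parabolics. Establishing that every $\Gamma$-factor of $G/P$ is a partial flag variety $G/Q$ requires the full strength of Margulis's measurable boundary theory and reduction theory, and ruling out proper intermediate $Q$ for a noncentral $N$ is exactly the point at which $\br$-rank $\geq 2$ and irreducibility are indispensable. Indeed, in rank one the theorem is false, so any correct proof must break down precisely here.
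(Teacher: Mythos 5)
The paper offers no proof of this statement at all: it is recalled verbatim from Margulis's book \cite{Margulis} as an input to Theorem \ref{thm:4.4}, so your proposal has to be measured against the standard proof. Your two-pillar strategy (amenability of $\Gamma/N$ plus property (T) for $\Gamma/N$, hence finiteness) is exactly Margulis's strategy, and your amenability pillar is essentially right in outline: the maximal measurable $N$-invariant factor of $G/P$ is a $\Gamma$-factor because $N$ is normal, the factor theorem identifies it with some $G/Q$, the case $Q=P$ forces $N\subset\bigcap_{g}gPg^{-1}=Z(G)$, and the case $Q=G$ gives ergodicity of $N$ on $G/P$, whence amenability of $\Gamma/N$ by the fixed-point argument you indicate. (One quibble: eliminating intermediate $Q$ is not about ``blocks in the root structure''; it is the observation that $N$ would lie in the kernel $\bigcap_g gQg^{-1}$ of the $G$-action on $G/Q$, which is central when $G$ is simple, while for products one uses Borel density and irreducibility to rule out a noncentral normal subgroup of $\Gamma$ projecting centrally to a factor. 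The hypothesis of rank $\geq 2$ is consumed by the factor theorem itself, not by this case analysis.)

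The genuine gap is the second pillar. The premise that ``a connected semisimple Lie group of $\br$-rank $\geq 2$ has property (T)'' is false: a semisimple group has property (T) only if \emph{every} simple factor does, and the theorem as stated (and as the paper uses it, since ``higher rank'' there allows rank-one factors) covers groups such as $G=\mathrm{PSL}_2(\br)\times\mathrm{PSL}_2(\br)$, which has rank $2$ but not property (T); its irreducible lattices (e.g.\ Hilbert modular groups) therefore also fail property (T), so your chain ``(T) for $G$ $\Rightarrow$ (T) for $\Gamma$ $\Rightarrow$ (T) for $\Gamma/N$'' collapses at the first link. Your closing remark that this residual case can be handled ``by applying the boundary argument factor-by-factor\dots at the cost of additional bookkeeping'' cannot be made to work: the boundary machinery only ever produces amenability, and amenability of $\Gamma/N$ alone never implies finiteness (infinite amenable groups abound), so no rearrangement of the first pillar can substitute for the second. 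What is actually needed there is Margulis's hard theorem that the \emph{quotient} $\Gamma/N$ has property (T) whenever $N$ is noncentral --- proved in \cite{Margulis} by lifting a representation of $\Gamma/N$ to $\Gamma$, inducing up to $G$, and analyzing the induced representation factor by factor (in modern form, the Bader--Shalom approach for lattices in products) --- which is a substantial representation-theoretic argument, not bookkeeping. As written, your proposal establishes the theorem only under the stronger hypothesis that every simple factor of $G$ has property (T).
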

\begin{thm}\label{thm:4.4}
Let $G$ be a connected semisimple Lie group of higher rank with trivial center and no compact factors.
Let $\Gamma$ be an irreducible lattice in $G$. Then, a representation $\rho \co \Gamma \rightarrow G$ is maximal if and only if $\rho$ is a discrete, faithful representation.
\end{thm}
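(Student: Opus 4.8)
The plan is to prove both implications through the reformulation of $\mathrm{Vol}(\rho)$ in terms of $\rho$--equivariant maps established in Section~\ref{sec:3}, namely $\mathrm{Vol}(\rho)= \inf \{|\langle s^*_b(\omega_b),\alpha \rangle| : c(\omega_b)=\omega,\ \alpha \in [M]^{\ell^1}_\mathrm{Lip} \}$ for any $\rho$--equivariant continuous map $s\co X\to X$. The key computational device is the distinguished bounded representative $f_b(x_0,\ldots,x_n)=\int_{[x_0,\ldots,x_n]}\omega$, which lies in $c^{-1}(\omega)$ because geodesic simplices in $X$ have uniformly bounded volume, and which has the virtue that $s^*f_b(x_0,\ldots,x_n)=\int_{[s(x_0),\ldots,s(x_n)]}\omega$ vanishes identically whenever the points $s(x_i)$ lie in a totally geodesic subspace of dimension $<n$. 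The two degenerate behaviours we must isolate are exactly those detected by Margulis's normal subgroup theorem and Margulis superrigidity, the two higher-rank tools available to us.

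For the implication \emph{discrete, faithful $\Rightarrow$ maximal}, I would first note that a discrete faithful image of the infinite group $\Gamma$ cannot be relatively compact, so $\rho(\Gamma)$ is unbounded. Margulis superrigidity \cite{Margulis} then produces a continuous extension $\tilde\rho\co G\to G$; since $\Gamma$ is Zariski dense in $G$ and $\rho$ is injective, $\ker\tilde\rho$ is a normal subgroup of $G$ meeting $\Gamma$ trivially, hence trivial, so $\tilde\rho$ is an injective endomorphism and therefore an automorphism of $G$. The associated isometry $s\co X\to X$ is $\rho$--equivariant and satisfies $s^*\omega=\pm\omega$; running the straightening argument of Lemma~\ref{lem:4.2} with the cocycle $\int_{[s(x_0),\ldots,s(x_n)]}\omega=\pm f$ in place of $f$ gives $|\langle s^*_b(\omega_b),\alpha\rangle|=\mathrm{Vol}(M)$ for every admissible $\omega_b$ and $\alpha$, whence $\mathrm{Vol}(\rho)=\mathrm{Vol}(M)$. (The absolute value in the definition absorbs a possible orientation reversal by an outer automorphism.)

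For the converse I would argue by contraposition, showing $\mathrm{Vol}(\rho)=0<\mathrm{Vol}(M)$ whenever $\rho$ fails to be discrete and faithful. If $\rho$ is not faithful, then $\ker\rho$ is a nontrivial normal subgroup of $\Gamma$, so Margulis's normal subgroup theorem (with $Z(G)=\{1\}$) forces $\Gamma/\ker\rho$, and hence $\rho(\Gamma)$, to be finite; a finite group fixes a point $x_0\in X$ by the Cartan fixed point theorem, and the constant $\rho$--equivariant map $s\equiv x_0$ makes $s^*f_b\equiv 0$, so $\mathrm{Vol}(\rho)=0$. If $\rho$ is faithful but $\rho(\Gamma)$ is non-discrete, then either $\rho(\Gamma)$ is bounded, in which case the same fixed-point argument gives $\mathrm{Vol}(\rho)=0$, or it is unbounded, in which case superrigidity again yields an extension $\tilde\rho$. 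Here $\tilde\rho$ cannot be an automorphism, since that would make $\rho(\Gamma)=\tilde\rho(\Gamma)$ a lattice and hence discrete; thus $H=\tilde\rho(G)$ is a proper subgroup and the equivariant map factors through the lower-dimensional totally geodesic symmetric space $X_H\subset X$, so $s^*f_b\equiv 0$ and $\mathrm{Vol}(\rho)=0$ once more.

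The main obstacle is the faithful, non-discrete case: it is precisely there that one must invoke superrigidity and carry out the Zariski-closure trichotomy (bounded, Zariski dense in a proper subgroup, or automorphic), and one must check vanishing at the level of \emph{bounded} cohomology rather than only ordinary cohomology. This last point is what makes the explicit representative $f_b$ essential, for it vanishes as a cochain on every configuration supported in a proper totally geodesic subspace, giving $\langle\rho^*_b(\omega_b),\alpha\rangle=0$ for a single admissible $\omega_b$ and thus forcing the infimum $\mathrm{Vol}(\rho)$ to be $0$; the remaining verifications are routine given Lemma~\ref{lem:4.2} and Proposition~\ref{pro:3.1}.
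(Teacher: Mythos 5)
Your forward implication (discrete and faithful implies maximal) is essentially the paper's own argument --- superrigidity produces an automorphism $\tilde{\rho}$, and the straightening computation of Lemma \ref{lem:4.2} finishes --- and it is sound, because there the geodesic cocycle $f(x_0,\ldots,x_n)=\int_{[x_0,\ldots,x_n]}\omega$ is used only as a \emph{continuous} representative of $\omega$, paired against straightened Lipschitz cycles whose simplices have uniformly bounded diameter. The genuine gap is in your converse, and it sits exactly at the point you yourself single out as essential: the claim that $f$ is a \emph{bounded} cocycle, hence lies in $c^{-1}(\omega)\subset H^n_{c,b}(G,\mathbb{R})$, ``because geodesic simplices in $X$ have uniformly bounded volume.'' In the higher-rank setting of Theorem \ref{thm:4.4} this is false: $X$ contains totally geodesic flats of dimension at least $2$, and top-dimensional geodesic simplices have unbounded volume (already in $\mathbb{H}^2\times\mathbb{H}^2$, coning a large flat triangle in a transverse direction produces simplices of arbitrarily large volume). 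This failure is precisely why Lafont and Schmidt \cite{LS06} had to replace geodesic straightening by barycentric straightening to prove positivity of simplicial volume in higher rank; boundedness of the geodesic volume cocycle is a rank-one phenomenon, which is why the paper can and does use your cocycle $f$ as a bounded cocycle in Sections \ref{sec:5} and \ref{sec:7}, but never in Section \ref{sec:4}.

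Consequently your mechanism for forcing $\mathrm{Vol}(\rho)=0$ collapses. A genuine bounded representative only satisfies $f_b=f+\delta b$ with $b$ a $G$--invariant \emph{continuous} (not bounded) cochain, so when the equivariant map $s$ takes values in a proper totally geodesic subspace one gets $s^*f_b=\delta(s^*b)$: a coboundary of a possibly unbounded cochain, which does not give $s^*_b(\omega_b)=0$ in $H^n_b(\Gamma,\mathbb{R})$ --- exactly the ordinary-versus-bounded trap you warned about. Your finite-image and precompact-image cases are repairable without $f$: a constant equivariant map pulls \emph{any} bounded cocycle back to a constant cocycle, which is the coboundary of a constant (hence bounded) cochain, so $\rho^*_b(\omega_b)=0$ for every $\omega_b$; this is in substance the paper's route, which invokes amenability of $\rho(\Gamma)$ (finite by the normal subgroup theorem, or precompact) and the vanishing of continuous bounded cohomology of amenable groups, needing no explicit representative at all. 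But in your remaining case --- $\rho$ faithful with unbounded non-discrete image, where superrigidity gives an extension onto a proper semisimple subgroup $H=\tilde{\rho}(G)$ --- the image is not amenable, the amenability argument is unavailable, and your cochain-level vanishing was the only tool; as written, that case is unproven. (The paper dispatches it by asserting that a faithful non-discrete image must be precompact; whether or not one accepts that assertion for all irreducible higher-rank lattices, your proposed substitute for it does not work.)
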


\begin{proof}

First, suppose that $\rho$ is discrete and faithful.
Margulis Superrigidity Theorem implies that $\rho$ extends to an automorphism $\tilde{\rho} \co G \rightarrow G$. Then, a representation $\rho \co \Gamma \rightarrow G$ is written as a composition $\rho = \tilde{\rho} \circ i$ where $i \co \Gamma \rightarrow G$ is the natural inclusion of $\Gamma$ into $G$.
The canonical pullback map $\rho^*_b \co H^\bullet_{c,b}(G,\mathbb{R}) \rightarrow H^\bullet_b(\Gamma,\mathbb{R})$ in continuous bounded cohomology is realized as a composition $\rho^*_b=res_b \circ \tilde{\rho}^*_b$,
$$ \xymatrixcolsep{2pc}\xymatrix{
H^\bullet_{c,b}(G,\mathbb{R}) \ar[r]^{\tilde{\rho}^*_b} &
H^\bullet_{c,b}(G,\mathbb{R}) \ar[r]^{res_b} &
H^\bullet_b(\Gamma,\mathbb{R}).
}$$

Since $\tilde{\rho}$ is an automorphism of $G$, it induces an automorphism of the continuous (bounded) cohomology of $G$. In particular, it is easy to see that $\tilde{\rho}^*_c(\omega) = \pm \omega$ in $H^n_c(G,\mathbb{R})$.
Considering the commutative diagram
$$ \xymatrixcolsep{4pc}\xymatrix{
H^n_{c,b}(G,\mathbb{R}) \ar[r]^-{c} \ar[d]_-{\tilde{\rho}^*_b} &
H^n_c(G,\mathbb{R}) \ar[d]^-{\tilde{\rho}^*_c} \\
H^n_{c,b}(G,\mathbb{R}) \ar[r]^-{c} &
H^n_c(G,\mathbb{R})
}$$
the automorphism $\tilde{\rho}^*_b \co H^n_{c,b}(G,\mathbb{R}) \rightarrow H^n_{c,b}(G,\mathbb{R})$ permutes the set of $c^{-1}(\omega)$ up to sign. Hence,
{\setlength\arraycolsep{2pt}
\begin{eqnarray*}
\mathrm{Vol}(\rho) &=& \inf \{ |\langle \rho^*_b (\omega_b),\alpha \rangle| \ | \ c(\omega_b)=\omega \text{ and } \alpha \in [M]^{\ell^1}_\mathrm{Lip} \} \\
&=& \inf \{|\langle res_b( \tilde{\rho}^*_b (\omega_b)),\alpha \rangle | \ | \ c(\omega_b)=\omega \text{ and } \alpha \in [M]^{\ell^1}_\mathrm{Lip} \} \\
&=& \inf \{ |\langle res_b( \omega_b),\alpha \rangle | \ | \ c(\omega_b)=\omega \text{ and } \alpha \in [M]^{\ell^1}_\mathrm{Lip} \}.
\end{eqnarray*}}
According to Lemma \ref{lem:4.2}, $\langle res_b(\omega_b),\alpha \rangle =\mathrm{Vol}(M)$ for all $\omega_b \in c^{-1}(\omega)$ and all $\alpha \in [M]^{\ell^1}_\mathrm{Lip}$. Therefore, $\mathrm{Vol}(\rho)= \mathrm{Vol}(M)$.

Conversely, suppose that $\rho \co \Gamma \rightarrow G$ is not a discrete, faithful representation. If $\rho$ has nontrivial kernel, then $\rho(\Gamma)$ is a finite group by the Margulis's normal subgroup theorem.
If $\rho$ is a nondiscrete, faithful representation, then $\rho(\Gamma)$ is precompact by the Margulis superrigidity theorem. In either case, $\rho(\Gamma)$ is an amenable subgroup of $G$.
Regarding $\rho$ as a composition $\rho = i \circ \rho$,
$$ \xymatrixcolsep{2pc}\xymatrix{
\Gamma \ar[r]^-{\rho} &
\rho(\Gamma) \ar[r]^-{i} &
G
}$$
one can realize $\rho^*_b \co H^\bullet_{c,b}(G,\mathbb{R})\rightarrow H^\bullet_b(\Gamma,\mathbb{R})$ as a composition $\rho^*_b \circ res_b$,
$$ \xymatrixcolsep{2pc}\xymatrix{
H^\bullet_{c,b}(G,\mathbb{R}) \ar[r]^-{res_b} &
H^\bullet_{c,b}(\rho(\Gamma),\mathbb{R}) \ar[r]^-{\rho^*_b} &
H^\bullet_b(\Gamma,\mathbb{R}).
}$$
The continuous bounded cohomology $H^\bullet_{c,b}(\rho(\Gamma),\mathbb{R})$ is trivial because
$\rho(\Gamma)$ is amenable. This implies that $\rho^*_b(\omega_b) = \rho^*_b(res_b(\omega_b))=0$ for all $\omega_b \in c^{-1}(\omega)$. Hence, $\mathrm{Vol}(\rho)=0$.
This completes the proof of this theorem.
\end{proof}

\section{Simplie Lie groups of rank $1$}\label{sec:5}

In this section, we give a proof of Theorem \ref{thm:1.2} for the case that $G$ is a simple Lie group of rank $1$ except for $\text{SO}(2,1)$. The Besson-Courtois-Gallot technique is a central ingredient here.

\begin{defi}
Let $F \co X \rightarrow Y$ be a smooth map between Riemannian manifolds $X$ and $Y$. The $p$--Jacobian $\mathrm{Jac}_pF$ of $F$ is defined by $$\mathrm{Jac}_pF(x) =\sup \| d_xF(u_1) \wedge \cdots \wedge d_xF(u_p) \|,$$ where $\{u_1,\ldots,u_p\}$ varies on the set of orthonormal $p$--frames at $x\in X$.
\end{defi}

Let $X$ and $Y$ be complete, simply connected, Riemannian manifolds.
Suppose that the sectional curvature $K_Y$ on $Y$ satisfies $K_Y \leq -1$.
Let $\Gamma$ and $\Gamma'$ be discrete subgroups of $\text{Isom}(X)$ and $\text{Isom}(Y)$ respectively. For any representation $\rho \co \Gamma \rightarrow \Gamma'=\rho(\Gamma)$,
Besson, Courtois and Gallot show that for all $\epsilon >0$, $p\geq 3$, there exists a $\rho$--equivariant map $F_\epsilon \co X \rightarrow Y$ such that
$$\mathrm{Jac}_p F_\epsilon (x) \leq \left( \frac{\delta(\Gamma)}{p-1}(1+\epsilon) \right)^p,$$
for all $x \in X$ where $\delta(\Gamma)$ is the critical exponent of $\Gamma$.
Furthermore, they show that if $X$ has strictly negative sectional curvature, $\Gamma$ and $\Gamma'$ are convex cocompact and $\rho$ is injective,
then there exists the natural map $F \co X\rightarrow Y$ in \cite[Theorem 1.10]{BCG99}.
Note that one can make use of Besson-Courtois-Gallot's method
if there exists a $\rho$--equivariant measurable map from the visual boundary $\partial X$ of $X$ to $\partial Y$ for a representation $\rho :\Gamma \rightarrow \mathrm{Isom}(Y)$.

\begin{prop}\label{pro:5.2}
Let $G$ and $H$ be connected simple Lie groups of rank $1$ with trivial center and no compact factors.
Let $X$ and $Y$ be the symmetric spaces associated with $G$ and $H$ respectively.
Assume that the symmetric metrics on $X$ and $Y$ are normalized so that their curvatures lie between $-4$ and $-1$.
Let $\Gamma$ be a lattice in $G$ and $\rho \co \Gamma \rightarrow H$ be a representation whose image is nonelementary. Then, there exists a map $F \co X\rightarrow Y$ such that
\begin{itemize}
\item[(1)] $F$ is smooth.
\item[(2)] $F$ is $\rho$--equivariant.
\item[(3)] For all $k\geq 3$, $\mathrm{Jac}_kF(x) \leq (\delta(\Gamma)/(k-1))^k$.
\item[(4)] If $\mathrm{dim}(X) \geq \mathrm{dim}(Y) \geq 3$, then $\mathrm{Jac}_nF(x) \leq (\delta(\Gamma)/(n+d-2))^n$ where $d$ is the real dimension of the field or the ring under consideration for $G$.
Moreover, equality holds for some $x\in X$ if and only if $D_xF$ is a homothety from $T_xX$ to $T_{F(x)}Y$.
\end{itemize}
\end{prop}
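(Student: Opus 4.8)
The plan is to realize $F$ as a Besson--Courtois--Gallot natural map, built by the barycenter method, so the first task is to produce the one ingredient that the general construction needs but that convex cocompactness would supply for free: a $\rho$--equivariant measurable boundary map $\phi \co \partial X \ra \partial Y$. Since $X$ and $Y$ are rank one, their visual boundaries carry the usual $\mathrm{Isom}$--actions, and the visual (Patterson--Sullivan) measure class on $\partial X$ makes $(\partial X,\mu)$ an amenable, doubly ergodic $\Gamma$--boundary. Because $\rho(\Gamma)$ is nonelementary it has no finite orbit and no invariant probability measure on $\partial Y$, so standard boundary theory yields a measurable $\rho$--equivariant map $\phi \co \partial X \ra \partial Y$. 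This is exactly the hypothesis flagged in the remark preceding the statement, and it is where nonelementarity is used.

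Second, I would run the barycenter construction. Fix the Patterson--Sullivan conformal density $\{\mu_x\}_{x\in X}$ on $\partial X$ and, for each $x$, transport it to $\partial Y$ by $\phi$, enlarging the dimension slightly to $\delta(\Gamma)(1+\epsilon)$ to guarantee integrability, obtaining probability measures $\nu_{x,\epsilon}$ on $\partial Y$. Define $F_\epsilon(x)$ to be the barycenter of $\nu_{x,\epsilon}$ relative to the Busemann functions of $Y$, i.e. the unique minimizer of $y\mapsto \int_{\partial Y} B_\theta(y)\,d\nu_{x,\epsilon}(\theta)$, equivalently the zero of its gradient. Strict convexity of the Busemann functions, valid because $K_Y\le -1<0$, guarantees existence and uniqueness of the barycenter, and the implicit function theorem applied to the gradient equation gives smoothness of $F_\epsilon$ in $x$; equivariance is immediate from the equivariance of $\phi$ and of the density. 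Letting $\epsilon\to 0$, using finiteness of $\mathrm{Vol}(M)$ to control the integrals, produces the natural map $F$ and settles (1) and (2).

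Third, the Jacobian bounds come from differentiating the barycenter equation. Writing $h(x)=\int_{\partial Y}\mathrm{Hess}\,B_\theta\,d\nu_{x,\epsilon}$ and $k(x)=\int_{\partial Y} dB_\theta\otimes dB_\theta\,d\nu_{x,\epsilon}$ as symmetric forms at $F(x)$, one has $\mathrm{tr}\,k(x)=1$ since $|\nabla B_\theta|\equiv 1$ and $\nu_{x,\epsilon}$ is a probability measure, while $K_Y\le -1$ gives the comparison $h(x)\ge g-k(x)$. The differential $D_xF_\epsilon$ factors through $h(x)^{-1}$ composed with a term controlled by $\delta(\Gamma)(1+\epsilon)$ and by the analogous form built from the Busemann functions of $X$; feeding the constraints $\mathrm{tr}\,k=1$ and $h\ge g-k$ into BCG's algebraic inequality for the $p$--Jacobian and sending $\epsilon\to0$ yields (3). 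For (4) I would replace the generic exponent $p-1$ by the sharper constant read off the precise spectrum of the Busemann Hessian of the rank one symmetric space: its nonzero eigenvalues are $1$ and $2$ with multiplicities $n-d$ and $d-1$, so its trace is $n+d-2$, the volume entropy of $X$. Inserting this spectral information into the same linear--algebra estimate improves the exponent to $n+d-2$; the equality case is then extracted from the conditions for equality in the Cauchy--Schwarz and arithmetic--geometric mean steps, which force $D_xF$ to carry the eigenspaces isometrically up to a common scalar, that is, to be a homothety.

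The hardest part will be the passage from the convex cocompact, injective setting of \cite[Theorem 1.10]{BCG99} to a lattice $\Gamma$, possibly nonuniform, whose image is merely nonelementary: one must check that the barycenter integrals converge and that the barycenter stays nondegenerate without cocompactness, and that the family $F_\epsilon$ converges as $\epsilon\to0$, where finiteness of $\mathrm{Vol}(M)$ and the fact that a lattice realizes the full critical exponent $\delta(\Gamma)=n+d-2$ are essential. Carrying the spectral multiplicities of the rank one Busemann Hessian through the Jacobian estimate to obtain the sharp equality statement in (4) is the other delicate point.
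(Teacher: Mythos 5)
Your overall strategy coincides with the paper's: produce a $\rho$--equivariant measurable boundary map $\varphi \co \partial X \rightarrow \partial Y$, push forward the Patterson--Sullivan density, and take barycenters to get the natural map, then quote the BCG machinery of \cite[Section 2]{BCG99} for properties (1)--(4). However, there is a genuine gap at the barycenter step. You assert that ``strict convexity of the Busemann functions, valid because $K_Y \leq -1 < 0$, guarantees existence and uniqueness of the barycenter.'' This is false for degenerate measures: if the pushforward $\varphi_*\nu_x$ were a Dirac mass at $\theta \in \partial Y$, the function $y \mapsto \int B_\theta(y)\,d(\varphi_*\nu_x)$ decreases to $-\infty$ along the ray toward $\theta$ and has no minimizer; if it were concentrated on two points, the minimizing set contains a whole geodesic. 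Since $\varphi$ is merely measurable, non-atomicity of $\nu_x$ does \emph{not} pass to $\varphi_*\nu_x$, so this degeneracy must be excluded by an argument. This exclusion is precisely the central original step in the paper's proof: assuming $\mu_x = \varphi_*\nu_x$ is concentrated on two points, equivariance ($\mu_{\gamma x} = \rho(\gamma)_*\mu_x$) together with the fact that all $\mu_x$, $x \in X$, lie in one measure class forces positive mass on every point of a $\rho(\Gamma)$--orbit, contradicting nonelementarity (which guarantees such an orbit has more than two points). In your proposal nonelementarity is used only once, for the existence of $\varphi$; the paper uses it a second time, and essentially, here.

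A secondary divergence: your $\epsilon$--regularization (exponent $\delta(\Gamma)(1+\epsilon)$, maps $F_\epsilon$, then a limit $\epsilon \to 0$ of the maps themselves) is both unjustified and unnecessary. You give no argument that the family $F_\epsilon$ converges --- ``finiteness of $\mathrm{Vol}(M)$'' controls volume inequalities, not pointwise convergence of barycenter maps, and in \cite{BCG99} the $\epsilon$ is carried through to the final inequalities rather than removed at the level of maps. The paper sidesteps this entirely: a lattice in a rank one group is a divergence group, so the Patterson--Sullivan density exists at the critical exponent $\delta(\Gamma) = n+d-2$ itself (and the existence and uniqueness of $\varphi$ follows from Burger--Mozes \cite[Theorem 0.2]{BM96} for CAT$(-1)$ targets), so one defines $F = \mathrm{bar} \circ \varphi_* \circ \mu$ directly with no limiting procedure. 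Your spectral computation of the Busemann Hessian for part (4) (eigenvalues $1$ and $2$ with multiplicities $n-d$ and $d-1$) is correct and is indeed what underlies the sharp exponent in \cite{BCG99}, but the paper simply cites that argument rather than redoing it.
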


\begin{proof}
By the assumption of the sectional curvatures on $X$ and $Y$, the associated symmetric spaces $X$ and $Y$ are $\mathrm{CAT}(-1)$--spaces. Since any lattice in $G$ is a discrete divergence subgroup of $G$, it follows from \cite[Theorem 0.2]{BM96} that there exists the unique $\rho$--equivariant measurable map $\varphi \co \partial X \rightarrow \partial Y$ and it takes almost all its values in the limit set of $\rho(\Gamma)$.

Let $\{ \nu_x \}_{x\in X}$ denote the family of Patterson-Sullivan measures on $\partial X$ for $\Gamma$. Let $\mu_x $ be the pushforward of $\nu_x$ by $\varphi$, that is, $\mu_x = \varphi_* \nu_x$. It can be easily seen that $\{\mu_x \}_{x\in X}$ is $\rho$--equivariant and moreover, the measures $\mu_x$ and $\mu_y$ are in the same measure class for all $x,y \in X$.

We claim that the barycenter of $\mu_x$ is well defined for all $x\in X$. Recall that if $\mu_x$ is not concentrated on two points, then the barycenter of $\mu_x$ is well defined. Assume that $\mu_x$ is concentrated on two points. Let $p$ be one of them. Then, $\mu_x$ must have positive weights on each $\rho(\Gamma)$--orbit of $p$ because $\mu_x$ and $\mu_{\gamma x}=\rho(\gamma)_* \mu_x$ are in the same measure class for all $\gamma \in \Gamma$. However, the set of $\rho(\Gamma)$--orbits of $p$ contains more than two points because $\rho(\Gamma)$ is nonelementary. This contradicts the assumption that $\mu_x$ is concentrated on only two points. Therefore, the claim holds.

As Besson, Courtois and Gallot construct the natural map in \cite{BCG99}, define a map $F \co X\rightarrow Y$ by the composition $bar \circ \varphi_* \circ \mu$ of maps
$$ \xymatrixcolsep{2pc}\xymatrix{
X \ar[r]^-{\mu} &
\mathcal{M}^+ (\partial X) \ar[r]^-{\varphi_*} &
\mathcal{M}^+ (\partial Y) \ar[r]^-{bar} &
Y
}$$
where $\mathcal{M}^+ (\partial X)$ denotes the set of positive Borel measures on $\partial X$.
Then, this map $F$ is a $\rho$--equivariant.
Furthermore, the properties $(1) \sim (4)$ of the natural map $F \co X \rightarrow Y$ can be proved by the same argument as in \cite[Section 2]{BCG99}.
\end{proof}

The map $F \co X\rightarrow Y$ as above is called the \emph{natural map} for a representation $\rho \co\Gamma \rightarrow H$.

\begin{thm}\label{thm:5.3}
Let $G$ be a connected simple Lie group of rank $1$ with trivial center and no compact factors, except for $\mathrm{SO}(2,1)$.
Let $\Gamma$ be a  lattice in $G$. Then, a representation $\rho \co \Gamma \rightarrow G$ is maximal if and only if $\rho$ is a discrete, faithful representation.
\end{thm}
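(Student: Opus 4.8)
The plan is to mirror the rank-one structure of Theorem \ref{thm:4.4} but replace the higher-rank rigidity inputs with the Besson-Courtois-Gallot (BCG) estimates packaged in Proposition \ref{pro:5.2}. For the forward direction, suppose $\rho$ is discrete and faithful. By Mostow rigidity for rank-one simple Lie groups (other than $\mathrm{SO}(2,1)$), $\rho$ is conjugate to the identity inclusion, hence extends to an automorphism $\tilde\rho \co G \to G$, and exactly as in Theorem \ref{thm:4.4} one writes $\rho^*_b = res_b \circ \tilde\rho^*_b$ with $\tilde\rho^*_c(\omega) = \pm\omega$. Then $\tilde\rho^*_b$ permutes $c^{-1}(\omega)$ up to sign, so $\mathrm{Vol}(\rho) = \inf\{|\langle res_b(\omega_b),\alpha\rangle|\}$, and Lemma \ref{lem:4.2} gives $\langle res_b(\omega_b),\alpha\rangle = \mathrm{Vol}(M)$ for every admissible $\omega_b$ and $\alpha$. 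Hence $\mathrm{Vol}(\rho) = \mathrm{Vol}(M)$, so $\rho$ is maximal.

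For the converse, the natural strategy is contrapositive via the BCG natural map. Suppose $\rho$ is not discrete and faithful. First dispose of the degenerate cases: if $\rho(\Gamma)$ is elementary (in particular if $\rho$ has amenable image), then $H^\bullet_{c,b}(\rho(\Gamma),\mathbb{R})$ vanishes and the amenability argument of Theorem \ref{thm:4.4} forces $\rho^*_b(\omega_b)=0$, so $\mathrm{Vol}(\rho)=0 < \mathrm{Vol}(M)$. So assume $\rho(\Gamma)$ is nonelementary; then Proposition \ref{pro:5.2} supplies a smooth, $\rho$--equivariant natural map $F \co X \to Y=X$ with the $n$--Jacobian bound $\mathrm{Jac}_nF(x) \leq (\delta(\Gamma)/(n+d-2))^n$. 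Since $\Gamma$ is a lattice, its critical exponent $\delta(\Gamma)$ equals the volume entropy $n+d-2$ of $X$ (normalized as in Proposition \ref{pro:5.2}), so $\mathrm{Jac}_nF(x) \leq 1$ pointwise.

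The key computation is then to reexpress the invariant using the $\rho$--equivariant-map reformulation of Section \ref{sec:3}. Using $F$ as the equivariant map $s$, one has $\mathrm{Vol}(\rho) = \inf\{|\langle F^*_b(\omega_b),\alpha\rangle|\}$, and by the de Rham computation underlying Lemma \ref{lem:3.3} this pairs against a locally finite fundamental cycle as an integral of $F^*\omega$ over $M$. The pointwise bound $\mathrm{Jac}_nF \leq 1$ gives $|F^*\omega| \leq \omega$, whence $\mathrm{Vol}(\rho) \leq \int_M |F^*\omega| \leq \mathrm{Vol}(M)$, recovering Proposition \ref{pro:3.1}; the sharper input is the equality clause. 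Maximality $\mathrm{Vol}(\rho)=\mathrm{Vol}(M)$ forces $\mathrm{Jac}_nF(x)=1$ almost everywhere, which by clause (4) of Proposition \ref{pro:5.2} means $D_xF$ is a homothety at (almost) every $x$; since curvatures are pinched in $[-4,-1]$ and $F$ is equivariant, this homothety has ratio $1$ and $F$ is a local isometry, hence (being equivariant and a covering of finite-volume quotients) an isometry conjugating $\rho$ to the inclusion. In particular $\rho$ is discrete and faithful, contradicting our assumption.

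The main obstacle I anticipate is the rigorous passage from "$\mathrm{Jac}_nF=1$ almost everywhere" to "$\rho$ is discrete and faithful," i.e.\ the equality case of BCG. Two technical points need care: first, establishing that $\delta(\Gamma)$ indeed equals the entropy $n+d-2$ so that the Jacobian bound is exactly $\leq 1$ (this uses that $\Gamma$ is a lattice, not merely discrete); second, promoting the almost-everywhere homothety condition to a genuine isometric conjugacy, which requires the regularity and boundary-map arguments of \cite{BCG99} together with the fact that a measurable equivariant boundary map that is the boundary extension of an isometry must come from an element of $G$. One must also confirm that the equivariant-map reformulation of $\mathrm{Vol}(\rho)$ and the straightening/de Rham pairing remain valid for the nonuniform lattice $\Gamma$ using the geometric simplicial volume and Proposition \ref{pro:4.1}, rather than the closed-manifold argument of Lemma \ref{lem:3.3}. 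Handling $\mathrm{SO}(2,1)$ separately (deferred to Section \ref{sec:6}) is precisely because Mostow rigidity and the BCG dimension hypotheses fail there.
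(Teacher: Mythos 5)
Your strategy coincides with the paper's own proof (Mostow rigidity plus Lemma \ref{lem:4.2} for the forward direction; elementary-image reduction plus the natural map of Proposition \ref{pro:5.2} and the Jacobian bound $\mathrm{Jac}_nF\leq 1$ for the converse), but the step you explicitly defer --- confirming that the equivariant-map reformulation and the ``de Rham pairing'' remain valid for a nonuniform lattice --- is precisely the hard point, and leaving it unresolved is a genuine gap. Concretely: the bounded class $F^*_b(\omega_b)$ is represented by the cocycle $F^*f(x_0,\ldots,x_n)=\int_{[F(x_0),\ldots,F(x_n)]}\omega$, while the quantity you want to extract, $\int_M F^*\omega$, arises from the \emph{different} cocycle $h(x_0,\ldots,x_n)=\int_{[x_0,\ldots,x_n]}F^*\omega=\int_{F([x_0,\ldots,x_n])}\omega$. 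These differ by $\delta\eta$, where $\eta$ integrates $\omega$ over the straight-line homotopy between $F([x_0,\ldots,x_{n-1}])$ and $[F(x_0),\ldots,F(x_{n-1})]$. When $M$ is closed (Lemma \ref{lem:3.3}), any coboundary pairs to zero against the finite fundamental cycle; but here $\alpha$ is an $\ell^1$-class on a noncompact manifold, and the pairing of $h$ and $F^*f$ against $\alpha$ agree only if $\eta$ is a \emph{bounded} cochain, i.e.\ only if $h$ represents the same bounded class $F^*_b(\omega_b)$. The paper devotes an internal lemma to exactly this: it bounds the volume of the straight-line homotopy via clause (3) of Proposition \ref{pro:5.2} applied with $k=n-1$, giving $\mathrm{Jac}_{n-1}F\leq\left(\frac{n+d-2}{n-2}\right)^{n-1}$, combined with the cone volume inequality $\mathrm{Vol}(\mathrm{Cone})\leq (n-1)^{-1}\mathrm{Vol}(\mathrm{Base})$. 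This requires $n-1\geq 3$, i.e.\ $\dim X\geq 4$; your proposal contains no substitute for this argument.

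The dimension restriction also exposes a concrete case your proposal cannot reach. The theorem excludes only $\mathrm{SO}(2,1)$, so $G=\mathrm{SO}(3,1)$ must be covered; but for $n=3$ the bound on $\mathrm{Jac}_{n-1}F=\mathrm{Jac}_2F$ is unavailable (clause (3) of Proposition \ref{pro:5.2} requires $k\geq 3$), so the boundedness of $\eta$, and with it your pairing identity $\langle F^*_b(\omega_b),\alpha\rangle=\int_M F^*\omega$, has no justification there. The paper handles $\mathrm{SO}(3,1)$ by an entirely separate route, quoting the volume-rigidity result of Bucher--Burger--Iozzi \cite{BBI} and matching their invariant with $\mathrm{Vol}(\rho)$ via the argument of Proposition \ref{prop:6.2}. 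Your closing remark that only $\mathrm{SO}(2,1)$ needs separate treatment conflates two different exclusions: $\mathrm{SO}(2,1)$ is excluded from the statement itself, whereas $\mathrm{SO}(3,1)$ lies inside the statement but outside the reach of the BCG-pairing argument, and a complete proof must say what to do about it.
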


\begin{proof}
Suppose that $\rho \co \Gamma \rightarrow G$ is a discrete, faithful representation.
 Let $X$ be the associated symmetric space of dimension $n$ and $M=\Gamma\backslash X$.
Then, $\rho$ extends to an automorphism $\tilde{\rho} \co G \rightarrow G$ due to the Mostow's rigidity theorem. In a similar argument as in the proof of Theorem \ref{thm:4.4}, we have $\mathrm{Vol}(\rho)=\mathrm{Vol}(M).$

Conversely, we now suppose that $\mathrm{Vol}(\rho)=\mathrm{Vol}(M)$. If $\rho(\Gamma)$ is elementary, then $\rho^*_b(\omega_b)=0$ for all $\omega_b \in c^{-1}(\omega)$ and thus, $\mathrm{Vol}(\rho)=0$. Hence, we can assume that $\rho(\Gamma)$ is nonelementary. Assume that the sectional curvature on $X$ lies between $-4$ and $-1$.
Then, there exists the natural map $F \co X \rightarrow X$ according to Proposition \ref{pro:5.2}.
Because of the critical exponent $\delta(\Gamma)=n+d-2$ for any lattice $\Gamma$ in $G$
where $d$ is the real dimension of the field or the ring under consideration for $G$, we have $$\mathrm{Jac}_nF(x) \leq 1.$$

Define a continuous function $f \co X^{n+1} \rightarrow \mathbb{R}$ by
$$f(x_0,\ldots,x_n) = \int_{[x_0,\ldots,x_n]}\omega.$$
It can be easily seen that $f \co X^{n+1} \rightarrow \mathbb{R}$ is a $G$--invariant continuous bounded cocycle representing the $G$--invariant volume form $\omega \in H^n_c(G,\mathbb{R})$ on $X$.
Hence, $f$ determines a continuous bounded cohomology class $\omega_b \in c^{-1}(\omega)$.
Recall that the $\Gamma$--invariant bounded cocycle $F^* f \co X^{n+1} \rightarrow \mathbb{R}$ is defined by $$F^* f(x_0,\ldots,x_n)=f(F(x_0),\ldots,F(x_n))=\int_{[F(x_0),\ldots,F(x_n)]}\omega.$$

Considering the pullback $F^*\omega$ of the $G$--invariant volume form $\omega$ on $X$ by the natural map $F$, one can define another $\Gamma$--invariant continuous bounded cocycle $h \co X^{n+1}\rightarrow \mathbb{R}$ by $$h(x_0,\ldots,x_n)=\int_{[x_0,\ldots,x_n]}F^*\omega.$$
The change of variables formula implies
$$h(x_0,\ldots,x_n)=\int_{[x_0,\ldots,x_n]}F^*\omega=\int_{F([x_0,\ldots,x_n])} \omega.$$

It is clear that $[F(x_0),\ldots,F(x_n)]= str(F([x_0,\ldots,x_n]))$. From the canonical straight line homotopy $H_\bullet \co C_\bullet(X,\mathbb{R})\rightarrow C_{\bullet+1}(X,\mathbb{R})$ between the geodesic straightening map and the identity, we have
$$ [F(x_0),\ldots,F(x_n)]-F([x_0,\ldots,x_n])=(\partial \circ H_n + H_{n-1} \circ \partial) (F([x_0,\ldots,x_n])).$$

It is a straightforward computation that $ h - F^* f = \delta \eta$ where
$$\eta(x_0,\ldots,x_{n-1})=\int_{H_{n-1}\circ F([x_0,\ldots,x_{n-1}])} \omega .$$
\begin{lemma} If $G$ is not $\mathrm{SO}(3,1)$, $\eta$ is a $\Gamma$-invariant continuous bounded cochain, which implies that $h$ and $F^* f$ represent the same bounded cohomology class $F^*_b(\omega_b)$ in $H^n_b(\Gamma,\mathbb{R})$.
\end{lemma}
\begin{proof}
In the case that $G$ is not $\mathrm{SO}(3,1)$, the associated symmetric space $X$ has dimension at least $4$.
Then, the property ($3$) in Proposition \ref{pro:5.2} shows $$\mathrm{Jac}_{n-1}F(x) \leq \left( \frac{n+d-2}{n-2} \right)^{n-1},$$ for all $x \in X$. Hence, the volume of $F([x_0,\ldots,x_{n-1}])$ has a uniform upper bound. The volume of the straight line homotopy between $F([x_0,\ldots,x_{n-1}])$ and $[F(x_0),\ldots,F(x_{n-1})]$ is uniformly bounded from above
since the volumes of both $F([x_0,\ldots,x_{n-1}])$ and $[F(x_0),\ldots,F(x_{n-1})]$ are uniformly bounded from above and the sectional curvature on $X$ is bounded from above by $-1$. More precisely, one can approximate the straight line homotopy by the union of small cones $C_i$ whose bases are on $[F(x_0),\ldots,F(x_{n-1})]$ and whose apexes are on $F([x_0,\ldots,x_{n-1}])$, and small cones $C_j$ whose bases are on $F([x_0,\ldots,x_{n-1}])$ and whose apexes are on  $[F(x_0),\ldots,F(x_{n-1})]$. On the other hand, it can be shown, see for example \cite{Gr82} (page 19), that
$$\mathrm{Vol}(Cone)\leq (n-1)^{-1}\mathrm{Vol}(Base).$$ This shows that the volume of the straight line homotopy is bounded uniformly by the sum of volumes of
$F([x_0,\ldots,x_{n-1}])$ and $[F(x_0),\ldots,F(x_{n-1})]$.
Thus, $\eta$ is a $\Gamma$-invariant continuous bounded cochain, which implies that $h$ and $F^* f$ represent the same bounded cohomology class $F^*_b(\omega_b)$ in $H^n_b(\Gamma,\mathbb{R})$.
\end{proof}
Let $\alpha \in [M]^{\ell^1}_\text{Lip}$ and $c$ be a locally finite fundamental $\ell^1$--cycle with finite Lipschitz constant representing $\alpha$.
We now assume that $G$ is not $\mathrm{SO}(3,1)$. Maximality condition $\mathrm{Vol}(\rho)=\mathrm{Vol}(M)$ gives us an inequality
\begin{eqnarray}\label{naturalvol}
|\langle F^*_b (\omega_b), \alpha \rangle | = | \langle F^* f, c \rangle | = |\langle h, c \rangle| = \left| \int_M F^*\omega \right|\geq \mathrm{Vol}(M).
\end{eqnarray}
Since $\text{Jac}_nF(x)\leq 1$ almost everywhere, inequality (\ref{naturalvol}) actually implies that
$$ \left| \int_M F^*\omega \right| = \mathrm{Vol}(M),$$
and hence, $\text{Jac}_nF(x)=1$ everywhere. Then, it follows from the property $(4)$ of the natural map in Proposition \ref{pro:5.2} that $F$ is an isometry.
Therefore, $\rho :\Gamma \rightarrow G$ is a discrete, faithful representation.

The theorem for the case $G=\mathrm{SO}(3,1)$ can be covered by the result of Bucher, Burger and Iozzi \cite{BBI}. In their paper \cite{BBI}, an invariant for representations of lattices in $\mathrm{SO}(n,1)$ is defined in the same manner as the invaraint for representations of lattices in $\mathrm{SO}(2,1)$ in \cite{BIW10}. Moreover, they show that the invariant detects discrete, faithful representations for $n \geq 3$. In fact, it is easy to see that the absolute value of the invariant for representations $\rho$ of hyperbolic lattices is equal to the volume invariant $\mathrm{Vol}(\rho)$. This follows from the same argument in the proof of Proposition \ref{prop:6.2}. Hence, the theorem holds for the case $G=\mathrm{SO}(3,1)$. We finally complete the proof.
\end{proof}

From Lemma \ref{lem:3.3}, it is easy to see that Theorem \ref{thm:5.3} covers the remaining cases $\mathrm{SU}(n,1), \mathrm{Sp}(n,1), \mathrm{F}_4^{-20}$ that Goldman's proof in \cite{Go92} did not cover. Hence, we complete the proof of Conjecture \ref{con:1.1}.

\section{$\text{SO}(2,1)$}\label{sec:6}

In this section, we deal with $\text{PU}(1,1)$ instead of $\text{SO}(2,1)$ for convenience.
Let $\Gamma$ be a lattice in $\text{PU}(1,1)$ and $\rho \co \Gamma \rightarrow \text{PU}(1,1)$ be a representation.
The unit ball $\mathbb{D}$ in the complex plane $\mathbb{C}$ is the associated symmetric space and $S=\Gamma\backslash \mathbb{D}$ is a surface of finite topological type with negative Euler number.
If $\Gamma$ is a uniform lattice, then the volume invariant $\mathrm{Vol}(\rho)$ is equal to
$|\upsilon(\rho)|$ as we see this in Lemma \ref{lem:3.3}. Hence, Theorem \ref{thm:1.2} for uniform lattices in $\text{PU}(1,1)$ follows from Goldman's proof. We refer the reader to \cite{Go81} for a detailed proof of this.

From now on, we assume that $\Gamma$ is a nonuniform lattice in $\text{PU}(1,1)$. In this case, Burger, Iozzi and Wienhard define the Toledo invariant as follows.
Let $\Sigma$ be a connected, oriented, compact surface with boundary $\partial \Sigma$ whose interior is homeomorphic to $S$. Let $\rho \co \pi_1(\Sigma) \rightarrow \text{PU}(1,1)$ be a representation.
The second continuous cohomology $H^2_c(\text{PU}(1,1),\mathbb{R})$ of $\text{PU}(1,1)$ is generated by the K\"{a}hler form $\kappa$ on $\mathbb{D}$. There is the unique continuous bounded K\"{a}hler class $\kappa_b \in H^2_{c,b}(\text{PU}(1,1),\mathbb{R})$ since the comparison map $c \co H^\bullet_{c,b}(\mathrm{PU}(1,1),\mathbb{R}) \rightarrow H^\bullet_c(\mathrm{PU}(1,1),\mathbb{R})$ is an isomorphism in degree $2$. By pulling back the bounded K\"{a}hler class $\kappa_b$ via $\rho$, one can obtain a bounded cohomology class $$\rho^*_b(\kappa_b) \in H^2_b(\pi_1(\Sigma),\mathbb{R}) \cong H^2_b(\Sigma,\mathbb{R}).$$

The canonical map $C^\bullet_b(\Sigma, \partial \Sigma, \mathbb{R}) \rightarrow C^\bullet_b(\Sigma,\mathbb{R})$ induces an isomorphism $j \co H^2_b(\Sigma,\partial \Sigma,\mathbb{R}) \rightarrow H^2_b(\Sigma,\mathbb{R})$ in bounded cohomology. The Toledo invariant $\mathrm{T}(\Sigma, \rho)$ of $\rho$ is defined by
$$\text{T}(\Sigma, \rho)= \langle j^{-1}(\rho^*_b(\kappa_b)),[\Sigma,\partial \Sigma]\rangle,$$
where $j^{-1}(\rho^*_b(\kappa_b))$ is considered as an ordinary relative cohomology class and $[\Sigma,\partial \Sigma]$ is the relative fundamental class. Burger, Iozzi and Wienhard obtain a kind of the Milnor inequality
$$ | \mathrm{T}(\Sigma,\rho) | \leq \chi(\Sigma),$$
where $\chi(\Sigma)$ is the Euler number of $\Sigma$. Moreover, they generalize Goldman's characterization of maximal representations for closed surfaces to the cases of surfaces with boundary.

\begin{thm}[Burger, Iozzi and Wienhard]
Let $\Sigma$ be a connected oriented surface with negative Euler number. A representation $\rho \co  \pi_1(\Sigma) \rightarrow \mathrm{PU}(1,1)$ is maximal if and only if it is the holonomy representation of a complete hyperbolic metric on the interior of $\Sigma$.
\end{thm}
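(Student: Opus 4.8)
The plan is to prove the two implications separately, with the forward (rigidity) direction carrying the real content. For the easy direction, suppose $\rho$ is the holonomy of a complete hyperbolic metric on the interior of $\Sigma$. Then $\rho$ is discrete and faithful, and the developing map furnishes a $\rho$--equivariant isometry $\mathbb{D}\to\mathbb{D}$; pulling back the K\"ahler form along it recovers the area form of the hyperbolic metric. Consequently $\langle j^{-1}(\rho^*_b(\kappa_b)),[\Sigma,\partial\Sigma]\rangle$ computes $\int_\Sigma\rho^*\kappa$, which by Gauss--Bonnet equals the hyperbolic area, i.e. (for the chosen normalization of $\kappa$) exactly $|\chi(\Sigma)|$, so that $\rho$ is maximal. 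The only point requiring care is that the relative class $j^{-1}(\rho^*_b(\kappa_b))$ pairs with $[\Sigma,\partial\Sigma]$ as the honest de Rham integral; this follows from the compatibility of the comparison map with the de Rham pairing, exactly as in Lemma \ref{lem:3.3}.

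For the hard direction, assume $\rho$ is maximal. The strategy I would follow is to transport the bounded K\"ahler class to the boundary circle and to extract an equivariant boundary map. Concretely, $\kappa_b$ is represented, up to a positive multiple, by the orientation cocycle $\mathrm{or}$ on $\partial\mathbb{D}\cong S^1$, the $\mathrm{PU}(1,1)$--invariant cocycle assigning to an ordered triple of boundary points the sign of their cyclic orientation. Using the amenability of a Poisson boundary $B$ of $\pi_1(\Sigma)$ together with the functoriality of bounded cohomology and this realization of $\kappa_b$, one obtains a measurable $\rho$--equivariant boundary map $\phi\co B\to S^1$, and the Toledo invariant is then expressed as an integral of $\mathrm{or}\circ(\phi\times\phi\times\phi)$ against the relative fundamental cycle of $(\Sigma,\partial\Sigma)$.

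The crux is to show that maximality forces $\phi$ to be \emph{monotone}, i.e. to respect the circular order almost everywhere. Here one uses that the Milnor--Wood inequality is saturated: the defect between $|\mathrm{T}(\Sigma,\rho)|$ and $|\chi(\Sigma)|$ is controlled by the measure of triples whose cyclic orientation is reversed by $\phi$, so equality forces that measure to vanish. A monotone equivariant boundary map, combined with the analysis of the peripheral elements (whose translation behavior is pinned down by the relative cocycle data along $\partial\Sigma$), upgrades to a semiconjugacy onto a Fuchsian representation. This in turn forces $\rho$ to be discrete and faithful with each peripheral element hyperbolic or parabolic, so that the quotient is a complete hyperbolic surface homeomorphic to the interior of $\Sigma$ of the correct topological type; thus $\rho$ is the claimed holonomy.

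The main obstacle I anticipate is the bookkeeping at $\partial\Sigma$. Because $\Sigma$ is open one must work with relative rather than absolute bounded cohomology, and the pairing with $[\Sigma,\partial\Sigma]$ acquires genuine boundary contributions coming from the rotation numbers of the $\rho$--images of the peripheral elements. A further subtlety is that $\pi_1(\Sigma)$ is free, so its Gromov boundary is a Cantor set rather than a circle; the monotonicity statement must therefore be phrased relative to the natural circular order this Cantor set inherits from a reference hyperbolization of $\Sigma$, and $\phi$ must be shown to be an order map for that structure. Controlling these peripheral terms---ruling out elliptic images of boundary curves and the creation of hidden cusps or cone points---is the technical heart of the argument and is precisely what distinguishes the bounded-surface case from Goldman's original closed-surface theorem.
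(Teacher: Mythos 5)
The first thing to note is that the paper does not prove this statement at all: it is quoted, with attribution, as a theorem of Burger--Iozzi--Wienhard (\cite{BIW10}; see also \cite{BIW11}), and is used as a black box in Section~\ref{sec:6}. The paper's own contribution there is Proposition~\ref{prop:6.2}, the identity $\mathrm{Vol}(\rho)=2\pi|\mathrm{T}(\Sigma,\rho)|$, and Theorem~\ref{thm:6.3} then follows by combining that identity with this imported result. So there is no internal proof to compare yours against; the relevant comparison is with the Burger--Iozzi--Wienhard argument itself, and your sketch is essentially a reconstruction of that argument (orientation cocycle representing $\kappa_b$, equivariant boundary map, maximality forcing monotonicity, semiconjugacy to a Fuchsian holonomy) rather than a different route.

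Judged as a proof, the sketch has genuine gaps. First, the boundary map $\phi\co B\to S^1$ does not come for free from amenability of $B$: amenability only yields an equivariant map into probability measures on $S^1$, and passing to a point map requires $\rho$ to be non-elementary. That non-elementarity must itself be extracted from maximality before anything else (for instance, if $\rho(\pi_1(\Sigma))$ were elementary, hence amenable, then $\rho^*_b(\kappa_b)=0$ and $\mathrm{T}(\Sigma,\rho)=0\neq \chi(\Sigma)$); this step is absent. Second, the formula expressing $\mathrm{T}(\Sigma,\rho)$ as an integral of $\mathrm{or}\circ(\phi\times\phi\times\phi)$ against a realization of the relative fundamental class is itself a nontrivial result of Burger--Iozzi--Wienhard, requiring a boundary implementation of \emph{relative} bounded cohomology; you assert it rather than establish it. Third, the passage from an almost-everywhere monotone equivariant map to ``holonomy of a complete hyperbolic metric'' --- in particular ruling out elliptic images of the peripheral elements and producing the semiconjugacy when $\pi_1(\Sigma)$ is free with Cantor-set boundary --- is exactly what you yourself label the technical heart, and it is left unexecuted; that is the bulk of the actual proof. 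Finally, in the easy direction, note a subtlety: when the complete hyperbolic metric has funnel ends the interior has infinite area, so $\langle j^{-1}(\rho^*_b(\kappa_b)),[\Sigma,\partial\Sigma]\rangle$ is not literally $\int_{\mathrm{int}\,\Sigma}\rho^*\kappa$; one must pair with a compact (straightened) relative fundamental cycle, where Gauss--Bonnet for the compact core with geodesic boundary gives $2\pi|\chi(\Sigma)|$.
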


In fact, a similar argument holds for a representation of $\pi_1(\Sigma)$ into a Lie group of Hermitian type.
We refer the reader to \cite{BIW10} for more details.

\begin{prop}\label{prop:6.2}
Let $\Gamma$ be a nonuniform lattice in $\mathrm{PU}(1,1)$. Then
$$\mathrm{Vol}(\rho)=2\pi |\mathrm{T}(\Sigma,\rho) |.$$
\end{prop}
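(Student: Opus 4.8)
The plan is to reduce the identity to two ingredients: the proportionality $\omega = 2\pi\kappa$ between the volume form and the K\"ahler form on $\mathbb{D}$, and a duality identifying the $\ell^1$--homology pairing on the open surface $S$ with the relative pairing on the compactification $(\Sigma,\partial\Sigma)$. First I would pin down the proportionality constant. Since $\mathbb{D}$ is two--dimensional, both $\omega$ and $\kappa$ are $\mathrm{PU}(1,1)$--invariant $2$--forms and $H^2_c(\mathrm{PU}(1,1),\mathbb{R})$ is one--dimensional, so $\omega=\lambda\kappa$ for a constant $\lambda$ depending only on the normalizations. Testing on the holonomy $\rho_0$ of a complete hyperbolic structure on $S$, where $\mathrm{T}(\Sigma,\rho_0)=|\chi(\Sigma)|$ by maximality and $\mathrm{Vol}(M)=2\pi|\chi(S)|$ by Gauss--Bonnet, calibrates $\lambda=2\pi$. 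Because the comparison map $c\colon H^2_{c,b}(\mathrm{PU}(1,1),\mathbb{R})\to H^2_c(\mathrm{PU}(1,1),\mathbb{R})$ is an isomorphism in degree $2$, the set $c^{-1}(\omega)$ is a singleton, $\omega_b=2\pi\kappa_b$, and hence $\rho^*_b(\omega_b)=2\pi\,\rho^*_b(\kappa_b)$ in $H^2_b(S,\mathbb{R})$. Substituting into the definition of $\mathrm{Vol}(\rho)$, it suffices to prove
$$\inf\{\,|\langle \rho^*_b(\kappa_b),\alpha\rangle|\ :\ \alpha\in[S]^{\ell^1}_{\mathrm{Lip}}\,\}=|\langle j^{-1}(\rho^*_b(\kappa_b)),[\Sigma,\partial\Sigma]\rangle|=|\mathrm{T}(\Sigma,\rho)|.$$

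Next I would identify the $\ell^1$--homology pairing with the relative pairing. Fix a representative $c=\sum_i a_i\sigma_i$ of $\alpha$ by a locally finite Lipschitz fundamental cycle; since $\alpha\in H^{\ell^1}_2(S,\mathbb{R})$ I may take $\|c\|_1=\sum_i|a_i|<\infty$. Because the boundary circles have amenable fundamental group, the map $j\colon H^2_b(\Sigma,\partial\Sigma,\mathbb{R})\to H^2_b(\Sigma,\mathbb{R})$ is an isomorphism, so I may choose a bounded cocycle $\beta$ representing $\rho^*_b(\kappa_b)$ which is the image of a relative bounded cocycle, i.e. one vanishing on chains supported in the cusps. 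Truncating $c$ along horocyclic cross--sections at depth $T$ writes $c=c_T+r_T$, where $c_T$ is supported on a compact core $\Sigma_T\simeq\Sigma$ and defines a relative fundamental cycle of $(\Sigma_T,\partial\Sigma_T)$ representing $[\Sigma,\partial\Sigma]$, while $r_T$ is the cuspidal tail with $\|r_T\|_1\to 0$ as $T\to\infty$. Since $\beta$ is bounded, $|\langle\beta,r_T\rangle|\le\|\beta\|_\infty\|r_T\|_1\to 0$, and since $\beta$ is a relative representative its evaluation on $c_T$ depends only on the relative class $[\Sigma,\partial\Sigma]$ and equals $\langle j^{-1}(\rho^*_b(\kappa_b)),[\Sigma,\partial\Sigma]\rangle$ for all large $T$. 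Passing to the limit gives $\langle\rho^*_b(\kappa_b),\alpha\rangle=\langle j^{-1}(\rho^*_b(\kappa_b)),[\Sigma,\partial\Sigma]\rangle$ for every $\alpha\in[S]^{\ell^1}_{\mathrm{Lip}}$; in particular this pairing is independent of $\alpha$. Combining the two steps yields $\mathrm{Vol}(\rho)=2\pi|\langle j^{-1}(\rho^*_b(\kappa_b)),[\Sigma,\partial\Sigma]\rangle|=2\pi|\mathrm{T}(\Sigma,\rho)|$.

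The hard part will be the second step, namely making the truncation argument rigorous. One must verify that $c_T$ is genuinely a relative cycle representing $[\Sigma,\partial\Sigma]$ despite simplices straddling the horocyclic cut, which I would handle using that the locally finite Lipschitz structure forces only finitely many simplices (of uniformly bounded diameter) to cross any cross--section, so these can be absorbed into the boundary term without affecting the relative class. The absolute convergence $\|c\|_1<\infty$ is what justifies interchanging the pairing with the exhaustion limit, in the same spirit as the estimate in the proof of Lemma \ref{lem:4.2}, while the amenability of the boundary subgroups is exactly what supplies the relative representative $\beta$ through the isomorphism $j$. By contrast, the first step is essentially normalization bookkeeping once the degree--$2$ isomorphism $c$ is invoked.
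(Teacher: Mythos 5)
Your strategy is the same as the paper's: reduce to the K\"ahler class via $\omega=2\pi\kappa$ and the degree--$2$ isomorphism of the comparison map, then identify the $\ell^1$--pairing $\langle\rho^*_b(\kappa_b),\alpha\rangle$ with the relative Toledo pairing by evaluating a cocycle that vanishes near the ends of $S$ against the part of the locally finite fundamental cycle lying in a compact core. Two remarks on the packaging. First, your exhaustion limit $T\to\infty$ is unnecessary: local finiteness already implies that only finitely many simplices of $c$ meet a \emph{fixed} compact core $K$, so the paper simply pairs with the finite chain $c|_K$ and never takes a limit. Second, your calibration of $\lambda=2\pi$ by testing on a hyperbolic holonomy $\rho_0$ is circular as stated --- it presupposes $\mathrm{Vol}(\rho_0)=\mathrm{Vol}(M)$ and the very identity $\mathrm{Vol}=\lambda|\mathrm{T}|$ being proved; the relation $\omega=2\pi\kappa$ is a statement about invariant forms on $\mathbb{D}$ and should be taken directly from the normalization conventions, as the paper does.

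The genuine gap is in your construction of $\beta$. A relative bounded cocycle for the pair $(\Sigma,\partial\Sigma)$ vanishes on chains supported in the boundary \emph{circles}; it does \emph{not} vanish on chains supported in the cusp neighborhoods of the open surface $S$ (the open collar $\partial\Sigma\times(0,1)$), and those are exactly the chains on which you need vanishing --- both to kill the tail $r_T$ and to make the evaluation on $c_T$ compute a relative class. So the isomorphism $j\co H^2_b(\Sigma,\partial\Sigma,\mathbb{R})\to H^2_b(\Sigma,\mathbb{R})$, as invoked, does not deliver the cocycle you use; your ``i.e.\ one vanishing on chains supported in the cusps'' is a non sequitur. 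The fix is the commutative diagram at the heart of the paper's proof: introduce the pairs $(S,S-K)$ and $(\Sigma,\Sigma-K)$, where $\Sigma-K$ is the open collar, and use amenability of the ($\cong\mathbb{Z}$) fundamental groups of its components to see that the inclusion-induced maps from $H^2_b(\Sigma,\Sigma-K,\mathbb{R})$ to $H^2_b(\Sigma,\mathbb{R})$, to $H^2_b(S,S-K,\mathbb{R})$ (hence to $H^2_b(S,\mathbb{R})$), and to $H^2_b(\Sigma,\partial\Sigma,\mathbb{R})$ are all isomorphisms in degree $2$. One then chooses a single cocycle $z\in C^2_b(\Sigma,\Sigma-K,\mathbb{R})$ which simultaneously represents $\rho^*_b(\kappa_b)$ on $S$ and on $\Sigma$, and $j^{-1}(\rho^*_b(\kappa_b))$ on $(\Sigma,\partial\Sigma)$; since $c|_K$ and $d|_K$ (for $d$ any relative fundamental cycle of $(\Sigma,\partial\Sigma)$) both represent $[\Sigma,\Sigma-K]$, both pairings equal $\langle[z],[\Sigma,\Sigma-K]\rangle$, which is the identity you want. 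Working relative to $\Sigma-K$ rather than relative to $\partial\Sigma$ also dissolves your ``straddling simplices'' worry: any simplex meeting $K$ contributes to $c|_K$, any simplex missing $K$ is annihilated by $z$, and no absorption argument is needed.
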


\begin{proof}

Let $S=\Gamma\backslash \mathbb{D}$ and $\Sigma$ be the compact surface with boundary whose interior is homeomorphic to $S$. We think of $S$ as the interior of $\Sigma$.
Let $\omega$ be the $\text{PU}(1,1)$--invariant volume form on $\mathbb{D}$. Then, $\omega= 2\pi \kappa$ for the K\"{a}hler form $\kappa$ on $\mathbb{D}$. Hence,
{\setlength\arraycolsep{2pt}
\begin{eqnarray*}
\mathrm{Vol}(\rho)&=& \inf \{ |\langle \rho^*_b (\omega_b), \alpha \rangle| \ | \ c(\omega_b)=\omega \text{ and }\alpha \in [S]^{\ell^1}_\mathrm{Lip} \} \\
&=& 2\pi \cdot \inf \{| \langle \rho^*_b (\kappa_b), \alpha \rangle | \ | \ \alpha \in [S]^{\ell^1}_\mathrm{Lip} \}.
\end{eqnarray*}}

We claim that $\langle \rho^*_b (\kappa_b), \alpha \rangle = \text{T}(\Sigma,\rho)$ for all $\alpha \in [S]^{\ell^1}_\mathrm{Lip}$. Consider a collar neighborhood of $\partial \Sigma$ in $\Sigma$ that is homeomorphic to $\partial \Sigma \times [0,1)$. Let $K$ be the complement of the collar neighborhood of $\partial \Sigma$. Note that $K$ is a compact subsurface with boundary that is a deformation retract of $\Sigma$. Consider the following commutative diagram,
$$ \xymatrixcolsep{2pc}\xymatrix{
C^\bullet_b(S,\mathbb{R}) &
C^\bullet_b(\Sigma,\mathbb{R}) \ar[l]_-{i_1} &
C^\bullet_b(\Sigma,\partial \Sigma,\mathbb{R}) \ar[l]_-{j} \\
C^\bullet_b(S,S-K,\mathbb{R})\ar[u]^-{p_1} & C^\bullet_b(\Sigma, \Sigma-K, \mathbb{R}) \ar[l]_-{i_2} \ar[u]^-{p_2} \ar[ru]_-{p_3} &
}$$
where every map in the above diagram is the map induced from the canonical inclusion.
Every map in the diagram induces an isomorphism in bounded cohomology in degree $2$.
Thus, there exists a cocycle $z\in C^2_b(\Sigma,\Sigma-K,\mathbb{R})$ such that
$p_2(z)$ represents $\rho_b^*(\kappa_b)$ in $H^2_b(\Sigma,\mathbb{R})$ and
$i_1 (p_2(z))$ represents $\rho_b^*(\kappa_b)$ in $H^2_b(S,\mathbb{R})$ and
$p_3(z)$ represents $j^{-1}(\rho_b^*(\kappa_b))$ in $H^2_b(\Sigma,\partial \Sigma, \mathbb{R})$.
Here, we use the same notation $\rho^*_b(\kappa_b)$ for the bounded cohomology classes in $H^2_b(\Sigma,\mathbb{R})$ and $H^2_b(S,\mathbb{R})$ identified with $\rho^*_b(\kappa_b) \in H^2_b(\Gamma,\mathbb{R})$ via the canonical isomorphisms $H^2_b(\Sigma,\mathbb{R}) \cong H^2_b(\Gamma,\mathbb{R})$ and $H^2_b(S,\mathbb{R}) \cong H^2_b(\Gamma,\mathbb{R})$ respectively.

Let $c=\sum_{i=1}^\infty a_i \sigma_i$ be a locally finite fundamental $\ell^1$--cycle with finite Lipschitz constant representing $\alpha \in [S]^{\ell^1}_\mathrm{Lip}$. Then, we have
$$\langle \rho^*_b(\kappa_b),\alpha \rangle = \langle i_1 (p_2(z)), c \rangle = \langle z, c|_K \rangle,$$
where  $c|_K=\sum_{\mathrm{im}\sigma_i \cap K \neq \emptyset} a_i \sigma_i$. It is a standard fact that $c|_K$ represents the relative fundamental class $[S,S-K]$ in $H_2(S,S-K,\mathbb{R})$.
Since the fundamental cycle representing $[S,S-K]$ is also a representative of the fundamental class $[\Sigma,\Sigma-K]$ by the canonical inclusion, $c|_K$ represents the fundamental class $[\Sigma,\Sigma -K]$ in $H_2(\Sigma,\Sigma-K,\mathbb{R})$. Let $[z]$ denote the cohomology class in $H^2(\Sigma,\Sigma-K,\mathbb{R})$ determined by $z$. From the viewpoint of the Kronecker product $\langle \cdot, \cdot \rangle \co H^2(\Sigma, \Sigma-K, \br) \otimes H_2(\Sigma,\Sigma-K,\br) \rightarrow \br$, we have
$$\langle z, c|_K \rangle = \langle [z], [\Sigma, \Sigma-K] \rangle.$$

Let $d \in C_2(\Sigma,\partial \Sigma)$ be a cycle representing the fundamental cycle $[\Sigma,\partial \Sigma]$ in $H_2(\Sigma,\partial \Sigma,\mathbb{R})$. Since $p_3(z)$ represents $j^{-1}( \rho^*_b(\kappa_b))$,
$$ \langle j^{-1}(\rho_b^*(\kappa_b)), [\Sigma,\partial \Sigma] \rangle = \langle p_3(z), d \rangle = \langle z, d|_K \rangle.$$
For any relative fundamental cycle $d$ in $C_2(\Sigma,\partial \Sigma,\mathbb{R})$, $d|_K$ represents the fundamental class $[\Sigma, \Sigma-K]$ in $H_2(\Sigma,\Sigma-K,\mathbb{R})$.
Hence, $$\langle z,d|_K \rangle = \langle [z], [\Sigma,\Sigma-K] \rangle.$$
Therefore, we can finally conclude that
$$\langle \rho^*_b(\kappa_b),\alpha \rangle = \langle j^{-1}(\rho^*_b(\kappa_b)), [\Sigma,\partial \Sigma] \rangle = \langle [z], [\Sigma,\Sigma-K] \rangle,$$
which implies this proposition.
\end{proof}

The equation $\mathrm{Vol}(\rho)=2 \pi |\text{T}(\rho)|$ implies that the structure theorem for maximal representations of compact surfaces into $\text{PU}(1,1)$ with respect to the Toledo invariant $\text{T}(\rho)$ holds for the volume invariant $\mathrm{Vol}(\rho)$.

\begin{thm}\label{thm:6.3}
Let $\Gamma$ be a lattice in $\mathrm{PU}(1,1)$. Then, a representation $\rho \co \Gamma \rightarrow G$ is maximal if and only if $\rho$ is a discrete, faithful representation.
\end{thm}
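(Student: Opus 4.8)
The plan is to reduce the statement to the translation between the volume invariant and the Toledo invariant established in Proposition \ref{prop:6.2}, and then to quote the Burger--Iozzi--Wienhard characterization of Toledo-maximal surface group representations stated just above. First I would dispose of the uniform case separately: when $\Gamma$ is a uniform lattice, Lemma \ref{lem:3.3} gives $\mathrm{Vol}(\rho)=|\upsilon(\rho)|$, and Goldman's analysis of $\mathrm{Hom}(\pi_1(S),\mathrm{PU}(1,1))$ in \cite{Go81} already shows that the equality $|\upsilon(\rho)|=\mathrm{Vol}(S)$ holds exactly for discrete, faithful $\rho$. Thus the substance of the argument lies entirely in the nonuniform case.

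For a nonuniform lattice $\Gamma$, write $S=\Gamma\backslash\mathbb{D}$ and let $\Sigma$ be the compact surface with boundary whose interior is $S$. Proposition \ref{prop:6.2} identifies $\mathrm{Vol}(\rho)=2\pi\,|\mathrm{T}(\Sigma,\rho)|$. Next I would pin down the target value $\mathrm{Vol}(S)$: since the $\mathrm{PU}(1,1)$--invariant metric on $\mathbb{D}$ has constant curvature, the Gauss--Bonnet theorem gives $\mathrm{Vol}(S)=2\pi\,|\chi(\Sigma)|$, using $\chi(S)=\chi(\Sigma)$. Combining this with the Burger--Iozzi--Wienhard Milnor-type inequality $|\mathrm{T}(\Sigma,\rho)|\leq|\chi(\Sigma)|$ shows that $\rho$ is maximal for the volume invariant, i.e. $\mathrm{Vol}(\rho)=\mathrm{Vol}(S)$, if and only if $|\mathrm{T}(\Sigma,\rho)|=|\chi(\Sigma)|$, that is, if and only if $\rho$ is maximal for the Toledo invariant.

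The final step is to feed this equivalence into the theorem of Burger, Iozzi and Wienhard quoted above, which asserts that $\rho\co\pi_1(\Sigma)\to\mathrm{PU}(1,1)$ is Toledo-maximal if and only if it is the holonomy representation of a complete hyperbolic metric on the interior of $\Sigma$. Since such a holonomy is discrete and faithful, and conversely a discrete, faithful $\rho$ uniformizes a complete hyperbolic structure on $S$, the chain of equivalences $\mathrm{Vol}(\rho)=\mathrm{Vol}(S)\iff|\mathrm{T}(\Sigma,\rho)|=|\chi(\Sigma)|\iff\rho\text{ discrete, faithful}$ yields the theorem. In particular the elementary-image case is automatically excluded, since then $\rho^*_b(\omega_b)=0$ forces $\mathrm{Vol}(\rho)=0<\mathrm{Vol}(S)$.

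The main obstacle here is not any single estimate---Proposition \ref{prop:6.2} and the Burger--Iozzi--Wienhard theorem do the heavy lifting---but rather the bookkeeping that makes the three maximality conditions line up. One must fix normalizations so that the value $\mathrm{Vol}(S)$, the factor $2\pi$ in Proposition \ref{prop:6.2}, and the Milnor bound $|\chi(\Sigma)|$ are mutually consistent. One must also take care that the geometric characterization ``holonomy of a complete hyperbolic metric on the interior of $\Sigma$'' coincides with the class of discrete, faithful representations under consideration, keeping in mind that peripheral elements may be sent to parabolics or to hyperbolics; it is precisely the relative formulation of the Toledo invariant via $[\Sigma,\partial\Sigma]$ that guarantees maximality detects exactly these geometric holonomies. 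Handling the uniform and nonuniform lattices by two parallel mechanisms and verifying they give the same conclusion is the only point demanding real attention.
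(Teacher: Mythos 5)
Your proposal follows the paper's own proof of Theorem \ref{thm:6.3} essentially verbatim: the uniform case is disposed of via Lemma \ref{lem:3.3} and Goldman's result, and the nonuniform case via the identity $\mathrm{Vol}(\rho)=2\pi\,|\mathrm{T}(\Sigma,\rho)|$ of Proposition \ref{prop:6.2} combined with the quoted Burger--Iozzi--Wienhard characterization of Toledo-maximal representations. Your only additions are bookkeeping the paper leaves implicit (the Gauss--Bonnet normalization $\mathrm{Vol}(S)=2\pi|\chi(\Sigma)|$ and the matching of the three maximality conditions), and, like the paper, you ultimately take for granted the identification of ``holonomy of a complete hyperbolic metric on the interior of $\Sigma$'' with ``discrete and faithful,'' so the two arguments stand or fall together.
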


Theorem \ref{thm:1.2} follows from Proposition \ref{pro:3.1},  Theorem \ref{thm:4.4}, \ref{thm:5.3} and \ref{thm:6.3}.

\begin{thm}\label{thm:6.4}
Let $\Gamma$ be an irreducible lattice in a connected semisimple Lie group $G$ with trivial center and
no compact factors. Let $\rho \co \Gamma \rightarrow G$ be a representation. Then, the volume invariant $\mathrm{Vol}(\rho)$ satisfies an inequality
$$ \mathrm{Vol}(\rho) \leq \mathrm{Vol}(M),$$
where $X$ is the associated symmetric space and $M=\Gamma\backslash X$.
Moreover,  equality holds if and only if
$\rho$ is a discrete, faithful representation.
\end{thm}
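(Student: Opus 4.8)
The plan is to treat Theorem \ref{thm:6.4} as the synthesis of the results already assembled in Sections \ref{sec:3}--\ref{sec:6}, organized according to the real rank of $G$. The inequality $\mathrm{Vol}(\rho)\leq \mathrm{Vol}(M)$ needs neither a case division nor the irreducibility hypothesis: it is precisely Proposition \ref{pro:3.1}, which combines submultiplicativity of the Kronecker pairing with the proportionality identity $\|\omega\|_\infty\cdot\|M\|_\mathrm{Lip}=\mathrm{Vol}(M)$ from Equation (\ref{eqn:2.1}). All the content therefore lies in the equality statement, and here I would split on $\mathrm{rank}_\mathbb{R}(G)$.

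If $\mathrm{rank}_\mathbb{R}(G)\geq 2$, then $\Gamma$ is an irreducible lattice in a higher-rank semisimple group, and Theorem \ref{thm:4.4} applies directly. Its forward direction uses Margulis superrigidity to extend a discrete faithful $\rho$ to an automorphism $\tilde\rho$ of $G$, so that $\rho^*_b$ factors as $res_b\circ\tilde\rho^*_b$ and Lemma \ref{lem:4.2} yields $\langle res_b(\omega_b),\alpha\rangle=\mathrm{Vol}(M)$ for every admissible $\omega_b$ and $\alpha$; the converse uses the normal subgroup theorem together with superrigidity to force $\rho(\Gamma)$ to be amenable whenever $\rho$ fails to be discrete and faithful, whence $\rho^*_b(\omega_b)=0$ and $\mathrm{Vol}(\rho)=0$. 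This is the one place where irreducibility is essential.

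If $\mathrm{rank}_\mathbb{R}(G)=1$, then $G$ is simple and every lattice is automatically irreducible, so I would distinguish two subcases by the associated symmetric space. When $X$ is the hyperbolic plane --- equivalently, under the trivial-center hypothesis, $G\cong\mathrm{PU}(1,1)\cong\mathrm{SO}(2,1)^\circ$ --- the equality case is Theorem \ref{thm:6.3}: in the nonuniform case one identifies $\mathrm{Vol}(\rho)=2\pi|\mathrm{T}(\Sigma,\rho)|$ via Proposition \ref{prop:6.2} and invokes the Burger--Iozzi--Wienhard structure theorem, while the uniform case reduces to Goldman through Lemma \ref{lem:3.3}. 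For all remaining rank-one groups ($\mathrm{SU}(n,1)$, $\mathrm{Sp}(n,1)$, $\mathrm{F}_4^{-20}$, and $\mathrm{SO}(n,1)$ with $n\geq 3$) the equality case is Theorem \ref{thm:5.3}: after normalizing curvatures to lie in $[-4,-1]$, Proposition \ref{pro:5.2} supplies the Besson--Courtois--Gallot natural map $F\co X\to X$ with $\mathrm{Jac}_nF\leq 1$; maximality forces $\int_M F^*\omega=\mathrm{Vol}(M)$, hence $\mathrm{Jac}_nF\equiv 1$, and property $(4)$ of Proposition \ref{pro:5.2} upgrades $F$ to an isometry, giving discreteness and faithfulness.

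I expect the only genuine friction in the assembly to be bookkeeping rather than new mathematics: verifying that the three subcases exhaust all admissible $G$ and that the normalizations match across them (in particular the identifications forced by triviality of the center, and the curvature convention of Theorem \ref{thm:5.3}). The substantive difficulty has already been discharged earlier, namely the well-definedness and chain-homotopy property of the geodesic straightening map on $C^\mathrm{lf,Lip}_\bullet(M,\mathbb{R})$ (Proposition \ref{pro:4.1}), which is exactly what makes Lemma \ref{lem:4.2} and the natural-map comparison valid for nonuniform $\Gamma$; without it the evaluation of bounded volume classes against locally finite Lipschitz fundamental cycles would not be controlled.
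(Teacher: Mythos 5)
Your proposal is correct and follows exactly the paper's own route: the paper proves Theorem \ref{thm:6.4} by citing Proposition \ref{pro:3.1} for the inequality and combining Theorem \ref{thm:4.4} (higher rank), Theorem \ref{thm:5.3} (rank one, excluding $\mathrm{SO}(2,1)$), and Theorem \ref{thm:6.3} (the $\mathrm{SO}(2,1)$/$\mathrm{PU}(1,1)$ case), which is precisely your case division. Your additional remarks on irreducibility being needed only in higher rank and on the role of Proposition \ref{pro:4.1} are accurate glosses on the ingredients, not deviations from the paper's argument.
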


\section{Representations of lattices in $\mathrm{SO}(n,1)$ into $\mathrm{SO}(m,1)$}\label{sec:7}

In this section, we introduce a volume invariant $\mathrm{Vol}(\rho)$ for representations $\rho \co \Gamma \rightarrow \mathrm{SO}(m,1)$ of lattices $\Gamma$ in $\mathrm{SO}(n,1)$ for $m\geq n$.
Let $\mathbb{H}^k$ denote the hyperbolic $k$--space for each $k\in \mathbb{N}$. Define a map $f_n^m \co (\mathbb{H}^m)^{n+1} \rightarrow \mathbb{R}$ by
$$f_n^m(x_0,\ldots,x_n)= \mathrm{Vol}_n^m([x_0,\ldots,x_n]),$$
where $\mathrm{Vol}_n^m([x_0,\ldots,x_n])$ is the $n$--dimensional volume of the geodesic $n$--simplex $[x_0,\ldots,x_n]$ in $\mathbb{H}^m$. Clearly, $f_n^m$ is a $\mathrm{SO}(m,1)$--invariant continuous (bounded) cochain in $C^n_c(\mathbb{H}^m,\mathbb{R})$.
Observing that the geodesic $n$--simplex $[x_0,\ldots,x_n]$ is contained in a copy of $\mathbb{H}^n$ in $\mathbb{H}^m$, it is easy to see that $f_n^m$ is a continuous (bounded) cocycle and moreover,
$$\| \omega_n^m \|_\infty = v_n$$ where $\omega_n^m \in H^n_c(\mathrm{SO}(m,1),\mathbb{R})$ is the continuous cohomology class determined by the cocycle $f_n^m$ and $v_n$ is the volume of a regular ideal geodesic simplex in $\mathbb{H}^n$.

According to the Van Est isomorphism, the continuous cohomology class $\omega_n^m$ corresponds to a $\mathrm{SO}(m,1)$--invariant, differential $n$--form $\omega_n^m$ on $\mathbb{H}^m$.
The restriction of the differential form $\omega_n^m$ to any totally geodesic $\mathbb{H}^n$ in $\mathbb{H}^m$ is the Riemannian volume form on the totally geodesic $\mathbb{H}^n$ in $\mathbb{H}^m$.

Let $\Gamma$ be a lattice in $\mathrm{SO}(n,1)$ and $\rho \co \Gamma \rightarrow \mathrm{SO}(m,1)$ be a representation for $m\geq n$.
Let $c \co H^*_{c,b}(\mathrm{SO}(m,1),\mathbb{R}) \rightarrow  H^*_c(\mathrm{SO}(m,1),\mathbb{R})$ be the comparison map and $M=\Gamma \backslash \mathbb{H}^n$.
Then, we define a volume invariant $\mathrm{Vol}(\rho)$ of $\rho$ by
$$ \mathrm{Vol}(\rho) = \inf \{ | \langle \rho^*_b (\omega_{n,b}^m), \alpha \rangle | \ | \ c(\omega_{n,b}^m)=\omega_n^m \text{ and } \alpha \in [M]^{\ell^1}_\mathrm{Lip}\}.$$

It satisfies an inequality
$$ \mathrm{Vol}(\rho) \leq \| \omega_n^m \|_\infty \cdot \|M\|_\mathrm{Lip} = v_n \cdot \frac{\mathrm{Vol}(M)}{v_n}=\mathrm{Vol}(M).$$

Recall that a representation $\rho \co \Gamma \rightarrow \mathrm{SO}(m,1)$ is said to be a \emph{totally geodesic representation} if there is a totally geodesic $\mathbb{H}^n \subset \mathbb{H}^m$ so that the image of the representation lies in the subgroup $G \subset \mathrm{SO}(m,1)$ that preserves this $\mathbb{H}^n$
and that the $\rho$--equivariant map $F \co \mathbb{H}^n \rightarrow \mathbb{H}^m$ is a totally geodesic isometric embedding.
Note that the subgroup $G$ of $\mathrm{SO}(m,1)$ is of the form $H\times K$ where $H$ is isomorphic to $\mathrm{SO}(n,1)$ and $K$ is isomorphic to the compact group $\mathrm{SO}(m-n)$.
A totally geodesic representation $\rho \co \Gamma \rightarrow \mathrm{SO}(m,1)$ splits into $\rho = \rho_1 \times \rho_2$ where $\rho_1$ is conjugate to $\Gamma$ by the Mostow  rigidity theorem.

\begin{thm}
Let $\Gamma$ be a lattice in $\mathrm{SO}(n,1)$ and $M=\Gamma \backslash \mathbb{H}^n$. The volume invariant $\mathrm{Vol}(\rho)$ of a representation $\rho \co \Gamma \rightarrow \mathrm{SO}(m,1)$ for $m\geq n \geq 3$ satisfies an inequality $$\mathrm{Vol}(\rho) \leq \mathrm{Vol}(M).$$
Moreover, equality holds if and only if $\rho$ is a totally geodesic representation.
\end{thm}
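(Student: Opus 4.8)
The inequality $\mathrm{Vol}(\rho)\le\mathrm{Vol}(M)$ was already established above, so the plan is to settle the equality case, following closely the template of Theorem \ref{thm:5.3}. For the direction that a totally geodesic representation is maximal, I would use the defining totally geodesic isometric embedding $F\co\mathbb{H}^n\to\mathbb{H}^m$, which is $\rho$--equivariant. Since $\omega_n^m$ restricts to the Riemannian volume form on every totally geodesic copy of $\mathbb{H}^n$, the pullback $F^*\omega_n^m$ is precisely the volume form of $\mathbb{H}^n$. Running the straightening argument of Lemma \ref{lem:4.2} with $F$ in place of the inclusion (the correction term coming from the straight--line homotopy is manifestly bounded because $F$ is an isometric embedding, so all the simplices involved have uniformly bounded volume), I would obtain $\langle\rho^*_b(\omega_{n,b}^m),\alpha\rangle=\int_M F^*\omega_n^m=\mathrm{Vol}(M)$ for every $\omega_{n,b}^m\in c^{-1}(\omega_n^m)$ and every $\alpha\in[M]^{\ell^1}_\mathrm{Lip}$, whence $\mathrm{Vol}(\rho)=\mathrm{Vol}(M)$.

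For the converse, suppose $\mathrm{Vol}(\rho)=\mathrm{Vol}(M)$. If $\rho(\Gamma)$ is elementary it is amenable, so $\rho^*_b(\omega_{n,b}^m)=0$ and $\mathrm{Vol}(\rho)=0<\mathrm{Vol}(M)$; hence maximality forces $\rho(\Gamma)$ to be nonelementary. I would then apply Proposition \ref{pro:5.2} with $X=\mathbb{H}^n$ and $Y=\mathbb{H}^m$ to produce a smooth $\rho$--equivariant natural map $F\co\mathbb{H}^n\to\mathbb{H}^m$; since the critical exponent of a lattice in $\mathrm{SO}(n,1)$ is $\delta(\Gamma)=n-1$, property (3) gives $\mathrm{Jac}_nF\le(\delta(\Gamma)/(n-1))^n=1$. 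For $n\ge 4$, property (3) also bounds $\mathrm{Jac}_{n-1}F$ (valid since $n-1\ge 3$), so that, exactly as in Theorem \ref{thm:5.3}, the coboundary correction $\eta$ is bounded and $F^*f_n^m$ represents the class $\rho^*_b(\omega_{n,b}^m)$ together with $h(x_0,\ldots,x_n)=\int_{F([x_0,\ldots,x_n])}\omega_n^m$. Maximality then yields $\left|\int_M F^*\omega_n^m\right|\ge\mathrm{Vol}(M)$, which combined with $\mathrm{Jac}_nF\le 1$ forces $\mathrm{Jac}_nF\equiv 1$.

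The main obstacle is the passage from $\mathrm{Jac}_nF\equiv 1$ to the conclusion that $F$ is a totally geodesic isometric embedding when $m>n$. Here property (4) of Proposition \ref{pro:5.2} is unavailable, since its hypothesis $\dim X\ge\dim Y$ holds only when $m=n$; in that case property (4) applies directly and equality forces $D_xF$ to be a homothety, hence an isometry as both curvatures equal $-1$. For $m>n$ I would instead extract the equality case from the proof of the bound $\mathrm{Jac}_nF\le(\delta(\Gamma)/(n-1))^n$ in \cite{BCG99}: equality of the top Jacobian forces the symmetric endomorphism of $T_xX$ associated with $D_xF$ to be a scalar multiple of the identity, so that $D_xF$ is a homothety onto a totally geodesic $n$--plane, and since the curvatures both equal $-1$ the ratio is $1$ and $D_xF$ is an isometric embedding. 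As this holds at every point, $F$ is a totally geodesic isometric embedding and $\rho$ is a totally geodesic representation. Finally, the case $n=3$ (where the boundedness of $\eta$ fails for the same reason it fails for $\mathrm{SO}(3,1)$ in Theorem \ref{thm:5.3}) I would settle by appealing to Bucher--Burger--Iozzi \cite{BBI}, whose invariant coincides with $\mathrm{Vol}(\rho)$ by the argument of Proposition \ref{prop:6.2} and which detects totally geodesic representations for all $n\ge 3$.
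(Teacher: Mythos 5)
Your overall route is the same as the paper's: the inequality is quoted from the discussion preceding the theorem, the forward implication is proved by pairing the pulled-back class against straightened Lipschitz fundamental cycles as in Lemma \ref{lem:4.2}, the converse uses the Besson--Courtois--Gallot natural map with $\delta(\Gamma)=n-1$ and the Theorem \ref{thm:5.3} argument (valid for $n\geq 4$ because the coboundary correction needs $\mathrm{Jac}_{n-1}F$ bounded), and the case $n=3$ is delegated to \cite{BBI}. One cosmetic difference: in the forward direction the paper needs no homotopy correction at all, since a totally geodesic $F$ sends geodesic simplices to geodesic simplices, so on the cocycle level $F^*f_n^m=\mathrm{sign}(F)\cdot f_n$ exactly and $F^*_b(\omega^m_{n,b})=\mathrm{sign}(F)\cdot res_b(\omega_{n,b})$, after which Lemma \ref{lem:4.2} applies verbatim; your version with a bounded correction term also works.

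However, there is a genuine gap in your treatment of the equality case when $m>n$. You correctly observe that property (4) of Proposition \ref{pro:5.2} is unavailable, but your substitute --- ``equality of the top Jacobian forces $D_xF$ to be a homothety, and as this holds at every point, $F$ is a totally geodesic isometric embedding'' --- does not close the argument. What pointwise homothety gives you is that $F$ is an isometric \emph{immersion}, and an isometric immersion $\mathbb{H}^n\rightarrow\mathbb{H}^m$ need not be totally geodesic: by the Gauss equation its second fundamental form need only have rank $\leq 1$ at each point, and there are classical non--totally geodesic examples (ruled hypersurfaces over horocycle foliations, in the style of Nomizu and Ferus) already in codimension one. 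Passing from $\mathrm{Jac}_nF\equiv 1$ to ``totally geodesic'' is precisely the hard rigidity step in unequal dimensions: one must exploit the specific structure of the natural map, namely that equality forces the pushed-forward Patterson--Sullivan measures $\mu_x$ to be supported on the boundary sphere of a single totally geodesic copy of $\mathbb{H}^n$ inside $\partial\mathbb{H}^m$, so that all barycenters, hence the image of $F$, lie in that copy. This is the content of Francaviglia--Klaff \cite{FK06}, and it is exactly what the paper cites at this point of its proof; you must either invoke \cite{FK06} here or reproduce that measure-theoretic analysis, since the linear-algebra extraction from \cite{BCG99} alone does not suffice.
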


\begin{proof}
We only need to show the second statement. In fact, a proof of the theorem is given by Bucher, Burger and Iozzi in \cite{BBI}. We give here an independent proof of the theorem for $m \geq n >3$. Suppose that a representation $\rho \co \Gamma \rightarrow \mathrm{SO}(m,1)$ is a totally geodesic representation.
Then, there exists a $\rho$--equivariant totally geodesic isometric embedding $F \co \mathbb{H}^n \rightarrow \mathbb{H}^m$.
The $\rho$--equivariant map $F$ induces homomorphisms $F^*_c \co H^\bullet_c(\mathrm{SO}(m,1),\mathbb{R})\rightarrow  H^\bullet (\Gamma,\mathbb{R})$ and
$F^*_b \co H^\bullet_{c,b}(\mathrm{SO}(m,1),\mathbb{R})\rightarrow  H^\bullet_b(\Gamma,\mathbb{R})$. The volume invariant $\mathrm{Vol}(\rho)$ of $\rho$ can be computed by
$$\mathrm{Vol}(\rho) = \inf \{ | \langle F^*_b (\omega_{n,b}^m), \alpha \rangle | \ | \ c(\omega_{n,b}^m)=\omega_n^m \text{ and } \alpha \in [M]^{\ell^1}_\mathrm{Lip}\}.$$

Since $F$ is an isometric embedding, we have
{\setlength\arraycolsep{2pt}
\begin{eqnarray*}
F^* f^m_n(y_0,\ldots,y_n) &=& f^m_n(F(y_0),\ldots,F(y_n)) \\
&=& \mathrm{Vol}_n^m([F(y_0),\ldots,F(y_n)]) \\
&=& \mathrm{sign}(F) \cdot \mathrm{Vol}^n_n([y_0,\ldots,y_n]),
\end{eqnarray*}}
where $\mathrm{sign}(F)=1$ if $F$ is orientation-preserving and $\mathrm{sign}(F)=-1$ if $F$ is orientation-reversing.
This implies that $F^*_c(\omega^m_n)=\mathrm{sign}(F) \cdot res_c(\omega_n)$ where $\omega_n$ is the $\mathrm{SO}(n,1)$--invariant volume form on $\mathbb{H}^n$. Hence, it immediately follows that $F^*_b(\omega^m_{n,b})= \mathrm{sign}(F) \cdot res_b(\omega_{n,b})$ for some $\omega_{n,b} \in c^{-1}(\omega_n)$.
By Lemma \ref{lem:4.2}, we have
$$ \langle F^*_b(\omega^m_{n,b}), \alpha \rangle = \langle \mathrm{sign}(F) \cdot res_b(\omega_{n,b}), \alpha \rangle = \mathrm{sign}(F) \cdot \mathrm{Vol}(M),$$
for all $\alpha \in [M]^{\ell^1}_\mathrm{Lip}$. Thus, we can conclude that $$ \mathrm{Vol}(\rho) = \inf \{ | \langle F^*_b (\omega_{n,b}^m), \alpha \rangle | \ | \ c(\omega_{n,b}^m)=\omega_n^m \text{ and } \alpha \in [M]^{\ell^1}_\mathrm{Lip}\} = \mathrm{Vol}(M).$$

Conversely, we suppose that $\mathrm{Vol}(\rho)=\mathrm{Vol}(M)$.
Recall that  the natural map $F \co \mathbb{H}^n \rightarrow \mathbb{H}^m$ satisfies:
\begin{itemize}
\item $F$ is smooth.
\item $F$ is $\rho$--equivariant.
\item For all $k\geq 3$, $\text{Jac}_kF(x) \leq (\delta(\Gamma)/(k-1))^k$.
\item If $\| D_xF(u_1)\wedge \cdots \wedge D_xF(u_k)\|= (\delta(\Gamma)/(k-1))^k$ for an orthonormal $k$-frame $u_1,\ldots,u_k$ at $x \in \mathbb{H}^n$, then the restriction of $D_xF$ to the subspace generated by $u_1,\ldots,u_k$ is a homothety.
\end{itemize}

Because of $\delta(\Gamma)=n-1$ for a lattice $\Gamma$ in $\mathrm{SO}(n,1)$, $\text{Jac}_nF(x)\leq 1$.
By an argument similar to the one used in the proof of Theorem \ref{thm:5.3}, we can conclude that
$$\left| \int_M F^*\omega_n^m \right| = \mathrm{Vol}(M).$$
Hence, $\mathrm{Jac}_n F(x) =1$ almost everywhere after possibly reversing the orientation of $X$.
Then, $F$ is a global isometry of $\mathbb{H}^n$. For a detailed proof about this, we refer to \cite{FK06}.
Therefore, $\rho$ is a totally geodesic representation.
\end{proof}

\section{Toledo invariant of complex hyperbolic representations}\label{sec:8}

In this section we consider only  uniform lattices $\Gamma\subset \mathrm{SU}(n,1),\ n\geq 2$.
\subsection{On complex hyperbolic space}
Let $\Gamma\subset \mathrm{SU}(n,1)$ be a uniform lattice that $M=\Gamma\backslash \bh^n_\bc$ and $\rho \co \Gamma\ra G=\mathrm{SU}(m,1),\ m\geq n$ be a  representation.
Let $\omega$ be a K\"ahler form on $\bh^m_\bc$. Then $\frac{1}{n!}\omega^n$ will be a $\mathrm{SU}(m,1)$-invariant form.
Then it defines an element $\omega_c\in H^{2n}_{c}(G,\br)$ via Van-Est isomorphism. Denote $\omega_b\in H^{2n}_{b,c}(G,\br)$ a bounded class such that $c(\omega_b)=\omega_c$ under the comparison map. Define the volume of the representation $\rho$ by
$$ \mathrm{Vol}(\rho) = \inf \{ |\langle \rho^*_b(\omega_b),\alpha \rangle| \ | \ c(\omega_b)=\omega_c \text{ and } \alpha \in [M]^{\ell^1}_\mathrm{Lip} \}.$$
Then, it satisfies the usual inequality
$$\mathrm{Vol}(\rho) \leq \|\omega_c\|_\infty \cdot \| M \|_\mathrm{Lip}.$$
But, since $\frac{1}{n!}\omega^n$ is the volume form on $\bh^n_\bc$, $\mathrm{Vol}(\rho) \leq \mathrm{Vol}(M)$.

Suppose $\mathrm{Vol}(\rho)=\mathrm{Vol}(M)$. If $\rho$ is not reductive, the image will be contained in a parabolic group, and the volume will be zero. Hence assume that $\rho$ is reductive.
Let $F \co \bh^n_\bc\ra \bh^m_\bc$ be a $\rho$--equivariant smooth harmonic map. Then
some class $\rho^*_b(\omega_b)$ is represented by $F^*(\frac{1}{n!}\omega^n)$ and the pairing satisfies
$$|\langle \rho^*_b(\omega_b),\alpha \rangle|=\left| \int_M F^* \left( \frac{1}{n!}\omega^n \right) \right| \geq\mathrm{Vol}(M).$$  This implies that the rank of $dF$ at some point $x\in \bh^n_\bc$ is maximal. By Siu's argument \cite{Siu}, $F$ is holomorphic. It is shown
in \cite{BCG99} that $\mathrm{Jac}_{2n}F\leq 1$ for holomorphic map $F$.
Consequently
$$\left| \int_M F^* \left( \frac{1}{n!}\omega^n \right) \right|
= \mathrm{Vol}(M)$$ and $F$ is an isometric embedding.

Hence we obtain using the same proof of section \ref{sec:7} and the above argument
\begin{thm}Let $\Gamma\subset \mathrm{SU}(n,1)$ be a uniform lattice and $\rho \co \Gamma\ra \mathrm{SU}(m,1)$, $ m\geq n$ be a  representation. Then $\rho \co \Gamma\ra \mathrm{SU}(m,1)$ is a maximal volume representation if and only if $\rho$ is a totally geodesic representation.
\end{thm}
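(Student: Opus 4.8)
The plan is to prove the theorem by combining the machinery already established in Sections \ref{sec:5}--\ref{sec:8}, treating separately the ``discrete faithful implies maximal'' and ``maximal implies totally geodesic'' directions. For the first direction, suppose $\rho$ is a totally geodesic representation. Then by definition there is a $\rho$--equivariant totally geodesic isometric embedding $F \co \bh^n_\bc \ra \bh^m_\bc$, which in particular is holomorphic and satisfies $\mathrm{Jac}_{2n}F \equiv 1$. The argument mirrors the first half of the proof in Section \ref{sec:7}: the pullback cocycle $F^*\bigl(\frac{1}{n!}\omega^n\bigr)$ restricted to the totally geodesic copy of $\bh^n_\bc$ is exactly $\pm$ the volume form on $\bh^n_\bc$, so $F^*_b(\omega_b) = \mathrm{sign}(F)\cdot res_b(\omega_{n,b})$ for a suitable $\omega_{n,b}\in c^{-1}$ of the volume class. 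Invoking the analogue of Lemma \ref{lem:4.2} (the key identity $\langle res_b(\omega_b),\alpha\rangle = \mathrm{Vol}(M)$ for all locally finite Lipschitz fundamental classes $\alpha$), one concludes $\mathrm{Vol}(\rho) = \mathrm{Vol}(M)$.

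For the converse, I would assume $\mathrm{Vol}(\rho) = \mathrm{Vol}(M)$ and follow the reductivity reduction exactly as in Section \ref{sec:8}: if $\rho$ is not reductive its image lies in a parabolic subgroup, the bounded Kähler class pulls back trivially, and $\mathrm{Vol}(\rho)=0 < \mathrm{Vol}(M)$, a contradiction. So $\rho$ is reductive, and there exists a $\rho$--equivariant smooth harmonic map $F \co \bh^n_\bc \ra \bh^m_\bc$. The maximality hypothesis forces
$$\left| \int_M F^*\left(\tfrac{1}{n!}\omega^n\right)\right| = \mathrm{Vol}(M),$$
so $\mathrm{Jac}_{2n}F = 1$ somewhere, whence $dF$ has maximal rank at some point. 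By Siu's Bochner-type argument \cite{Siu} a harmonic map of maximal rank between complex hyperbolic spaces is holomorphic (or antiholomorphic), and then the Besson--Courtois--Gallot bound $\mathrm{Jac}_{2n}F \leq 1$ for holomorphic maps combined with the equality above upgrades this to $\mathrm{Jac}_{2n}F \equiv 1$ everywhere, which by the rigidity clause forces $F$ to be a homothety, i.e.\ a totally geodesic isometric embedding onto a copy of $\bh^n_\bc$. This exhibits $\rho$ as a totally geodesic representation.

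The main obstacle is ensuring that $\rho^*_b(\omega_b)$ is genuinely represented by the pullback cocycle $F^*\bigl(\frac{1}{n!}\omega^n\bigr)$ at the level of \emph{bounded} cohomology, not merely in ordinary cohomology. In the rank-one real hyperbolic setting this was handled in Section \ref{sec:5} by the explicit primitive $\eta$ and a careful estimate showing the straight-line homotopy between $F([x_0,\dots,x_n])$ and the geodesic straightening $[F(x_0),\dots,F(x_n)]$ has uniformly bounded volume, using the curvature upper bound $-1$ and the cone inequality $\mathrm{Vol}(\mathrm{Cone})\leq (n-1)^{-1}\mathrm{Vol}(\mathrm{Base})$. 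The same device must be adapted to the complex hyperbolic $2n$--dimensional simplices here, which is where the dimension restriction $m\geq n\geq 2$ and the boundedness of the relevant lower-degree cochain need to be verified; since $\Gamma$ is a \emph{uniform} lattice, $M$ is closed and $[M]^{\ell^1}_\mathrm{Lip}$ contains the single class $i_*[M]$, so by Lemma \ref{lem:3.3} the pairing reduces to the honest integral $\int_M F^*(\frac{1}{n!}\omega^n)$ and the bounded-cohomology subtlety is less severe than in the nonuniform case. The invocation of Siu's theorem, which requires the harmonic map to have sufficiently high rank and the target to be a complex hyperbolic (or more generally Kähler) space of appropriate curvature, is the step most in need of a precise citation.
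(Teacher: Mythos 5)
Your proposal is correct and takes essentially the same route as the paper's own proof: the forward direction is the Section 7 computation (pullback equals $\mathrm{sign}(F)\cdot res_b(\omega_{n,b})$ plus Lemma 4.2, or simply Lemma 3.3 since $M$ is closed), and the converse is the identical chain of reductivity reduction, existence of a $\rho$--equivariant harmonic map, Siu's holomorphicity argument, and the Besson--Courtois--Gallot bound $\mathrm{Jac}_{2n}F\leq 1$ for holomorphic maps together with its equality case. Your closing concern about bounded representatives is resolved exactly as you suggest and as the paper implicitly does (e.g.\ in Corollary 8.2): since $\Gamma$ is uniform, $[M]^{\ell^1}_{\mathrm{Lip}}=\{i_*[M]\}$ and Lemma 3.3 reduces every pairing to the integral $\int_M F^*\bigl(\tfrac{1}{n!}\omega^n\bigr)$, so no straight-line homotopy estimate is needed.
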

This is  a reformulation of Corlette's result in \cite{Corlette} in terms of the bounded cohomology theory.
See also \cite{KM} and \cite{BI} for defferent formulations.
Note that this theorem implies both Goldman-Millson and Corlette's results.
\begin{cor}\label{cor:8.2} Let $\Gamma\subset \mathrm{SU}(n,1)\subset \mathrm{SU}(m,1)$ be a uniform lattice. Then it is locally rigid up to compact group.
\end{cor}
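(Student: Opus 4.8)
The plan is to deduce the corollary directly from the Theorem characterizing maximal volume representations, by showing that the volume invariant of a uniform complex hyperbolic lattice is a \emph{locally constant} characteristic number. First I would record that, because $M=\Gamma\backslash\bh^n_\bc$ is closed, the set $[M]^{\ell^1}_{\mathrm{Lip}}$ consists of the single class $i_*[M]$, so by naturality of the Kronecker pairing and the commutative diagram relating $c$ and $\rho^*$ one has $\langle\rho^*_b(\omega_b),i_*[M]\rangle=\langle c(\rho^*_b(\omega_b)),[M]\rangle=\langle\rho^*_c(\omega_c),[M]\rangle$ for every $\omega_b\in c^{-1}(\omega_c)$. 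Since $\omega_c=\frac{1}{n!}\omega^n$ and $\rho^*_c$ is a ring map, this yields
$$\mathrm{Vol}(\rho)=\Big|\big\langle\rho^*_c(\omega_c),[M]\big\rangle\Big|=\frac{1}{n!}\Big|\big\langle(\rho^*[\omega])^n,[M]\big\rangle\Big|,$$
where $[\omega]\in H^2_c(\mathrm{SU}(m,1),\br)$ is the K\"ahler class. This holds for \emph{all} representations $\rho$ of the uniform lattice, reductive or not, and exhibits $\mathrm{Vol}(\rho)$ as the absolute value of a characteristic number of the flat $\mathrm{SU}(m,1)$-bundle $E_\rho$ over $M$.

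Next I would argue that this number is locally constant on $\mathrm{Hom}(\Gamma,\mathrm{SU}(m,1))$. The point is that $\langle\rho^*_c(\omega_c),[M]\rangle$ is obtained by pulling back a class coming from the cohomology of the classifying space $B\mathrm{SU}(m,1)^{\mathrm{top}}$ of the topological group and evaluating on $[M]$; hence it depends only on the homotopy class of the classifying map $M\to B\mathrm{SU}(m,1)^{\mathrm{top}}$, that is, only on the topological type of $E_\rho$. A continuous deformation of the flat connection does not change this topological type, so the characteristic number is constant along any continuous path of representations, and in particular is locally constant near the inclusion $i\co\Gamma\hookrightarrow\mathrm{SU}(n,1)\hookrightarrow\mathrm{SU}(m,1)$.

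I would then combine the two observations. The inclusion $i$ is a totally geodesic representation, since it preserves the totally geodesic $\bh^n_\bc\subset\bh^m_\bc$ and the equivariant map is the isometric embedding; by the Theorem it is maximal, so $\mathrm{Vol}(i)=\mathrm{Vol}(M)$. By local constancy, every $\rho'$ sufficiently close to $i$ satisfies $\mathrm{Vol}(\rho')=\mathrm{Vol}(M)$, hence is maximal, hence (again by the Theorem) is totally geodesic. Such a $\rho'$ stabilizes a copy of $\bh^n_\bc$ inside $\bh^m_\bc$, so its image lies in the stabilizer $S(U(n,1)\times U(m-n))$ and $\rho'$ splits as $\rho'=\rho'_1\times\rho'_2$ with $\rho'_1\co\Gamma\to\mathrm{SU}(n,1)$ discrete and faithful and $\rho'_2$ landing in the compact factor $U(m-n)$. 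Since $n\geq 2$, Mostow rigidity forces $\rho'_1$ to be conjugate to the original inclusion $\Gamma\subset\mathrm{SU}(n,1)$, while $\rho'_2$ may vary freely inside the compact factor; this is exactly local rigidity up to compact group.

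The main obstacle I expect is the local constancy step: one must verify carefully that $\omega_c$ genuinely arises from $H^{2n}(B\mathrm{SU}(m,1)^{\mathrm{top}},\br)$ so that $\langle\rho^*_c(\omega_c),[M]\rangle$ is a homotopy (hence deformation) invariant of the classifying map, rather than merely a continuous function of $\rho$; the cohomological formula above is what makes this argument applicable uniformly, including to possibly non-reductive nearby representations for which the harmonic- or equivariant-map description of Section~\ref{sec:8} is unavailable. A secondary, more routine point is to identify the stabilizer of $\bh^n_\bc$ precisely and to apply Mostow rigidity only to the $\mathrm{SU}(n,1)$-factor, isolating the compact direction as the genuine source of flexibility.
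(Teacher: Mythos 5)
Your proposal is correct and follows essentially the same route as the paper: the paper's own proof likewise reduces $\mathrm{Vol}(\rho_t)$ to the evaluation of the pulled-back class $\frac{1}{n!}\omega^n$ against the fundamental class (via Lemma \ref{lem:3.3} and a $\rho_t$--equivariant map $f_t$), observes that this class is a Chern class in $H^{2n}(M,\bz)$ and hence cannot vary along a continuous deformation, and then applies the maximal-implies-totally-geodesic theorem to conclude rigidity up to the compact factor. Your classifying-space justification of local constancy and your explicit identification of the stabilizer $S(U(n,1)\times U(m-n))$ with the Mostow step merely spell out what the paper compresses into ``is a Chern class and hence constant, so the nearby representations are maximal and conjugate up to compact group.''
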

\begin{proof}Suppose $\rho_t \co \Gamma\ra \mathrm{SU}(m,1)$ is an one-parameter family of representations near $\rho_0=i$, the canonical inclusion.
Note that $\mathrm{Vol}(\rho_t)=\left| \int_M f_t^* \left( \frac{1}{n!}\omega^n \right) \right| $ by Lemma \ref{lem:3.3} where $f_t$ is a $\rho_t$--equivariant map $f_t \co \bh^n_\bc\ra \bh^m_\bc$.

 Note that $f_0^*(\frac{1}{n!}\omega^n)\in H^{2n}(M,\bz)$ is a Chern class and hence $[f_t^*(\frac{1}{n!}\omega^n)]=[f_0^*(\frac{1}{n!}\omega^n)]\in H^{2n}(M,\bz)$. This implies that $\mathrm{Vol}(\rho_t)=\mathrm{Vol}(\rho_0)$. Since $\rho_0$ is a maximal volume representation, $\rho_t$ is also maximal, hence
 they are all conjugate each other up to compact group.
\end{proof}

\subsection{On quaternionic hyperbolic space}
We can also formulate the rigidity phenomenon of  uniform lattices of $\mathrm{SU}(n,1)$ in $\mathrm{Sp}(m,1)$  as follows.
A homogeneous space $D=\mathrm{Sp}(m,1)/\mathrm{Sp}(m)\times\mathrm{U}(1)$ over $\bh^m_\bh$ with fiber $\bc P^1$ is called a twistor space. The vertical bundle $\cal V$ tangent to the fibers is a smooth subbundle of $TD$ and a unique $\mathrm{Sp}(m,1)$-invariant complement to this vertical subbundle is called the holomorphic
horizontal subbundle $\cal H$. This twistor space $D$ posseses a pseudo-K\"ahler metric $g$ which is negative definite on $\cal V$ and positive definite on $\cal H$, whose associated (1,1) form $\omega_D$ is closed.
The quaternionic hyperbolic space $\bh^m_\bh$ has one dimensional space of $\mathrm{Sp}(m,1)$--invariant four forms. We pick a four form $\alpha$ so that its restriction to a totally geodesic complex hyperbolic subspace $\bh^m_\bc$ is $\omega^2_{\bh^m_\bc}$ where $\omega_{\bh^m_\bc}$ is a K\"ahler form on complex hyperbolic space. The relation between $\alpha$ and $\omega_D$ is as follows, see \cite[Lemma 1]{DG}.
$$\pi^*\alpha=\omega_D^2 + d\beta$$ for some $\mathrm{Sp}(m,1)$--invariant 3--form $\beta$. Note that $\omega_D^n\in H^{2n}_{c}(\mathrm{Sp}(m,1),\br)$.
One can define the volume of a representation $\rho \co \Gamma\ra \mathrm{Sp}(m,1)$ by$$ \mathrm{Vol}(\rho) = \inf \{ |\langle \rho^*_b(\omega_b),\beta \rangle| \ | \ c(\omega_b)=\omega^n_D \text{ and } \beta \in [M]^{\ell^1}_\mathrm{Lip} \}.$$
Then $\mathrm{Vol}(\rho)\leq \| \omega^n_D \|_\infty \cdot \| M \|_{\mathrm{Lip}}$.
Suppose $\mathrm{Vol}(\rho)=\mathrm{Vol}(M)$. Note that such a value is realized when $\rho$ is a totally geodesic embedding since the restriction of $\omega_D$ to $\bh^n_\bc$ is a K\"ahler form. Let $f\co \bh^n_\bc\ra \bh^m_\bh$ be a $\rho$--equivariant harmonic map and let
$F$ be a lift of $f$ to the twister space $D$ so that $\pi\circ F=f$.
If $f^*\alpha=0$, then $F^*\omega_D^2=-dF^*\beta$ on $M$ and thus, we have $$\int_M F^*\omega^n_D =-\int_M F^*(\omega^{n-2}_D\wedge d\beta)=-\int_M dF^*(\omega^{n-2}_D\wedge \beta)=0.$$

Hence we may assume that $f^*\alpha\neq 0$, then the rank of $f$ is at least four at some point. By \cite{CD}, one can choose  $F$ to be a holomorphic horizontal lift.
 Then $F^* \omega_D^n$ represents some class
$\rho^*_b(\omega_b)$ with $c(\omega_b)=\omega^n_D$,  and
$$\mathrm{Vol}(\rho)=\mathrm{Vol}(M)\leq \int_M F^*\omega_D^n. $$ Since $F$ is holomorphic, $F^*\omega_D\leq \omega_M$, hence
$$\int_M F^*\omega_D^n\leq \int_M \omega_M^n=\mathrm{Vol}(M).$$
This forces $F$ to be totally geodesic embedding.
Hence the local rigidity of
$\Gamma\subset \mathrm{SU}(n,1)\subset \mathrm{Sp}(m,1)$ also follows as in Corollary \ref{cor:8.2}, which is part of a result in \cite{KKP}.

\end{document}